\documentclass[11pt,reqno]{amsart}

\usepackage[T1]{fontenc}
\usepackage[utf8]{inputenc}
\usepackage{lmodern}
\usepackage{microtype}

\usepackage{amsmath,amssymb,amsthm,mathtools}
\usepackage{mathrsfs}
\usepackage{bm}
\usepackage{bbm}

\usepackage{geometry}
\usepackage{enumitem}
\usepackage{graphicx}
\usepackage{xcolor}
\usepackage{tikz}
\usepackage{cancel}

\usepackage{physics}

\usepackage{hyperref}
\usepackage[nameinlink,capitalise]{cleveref}

\title[Quantitative Stability for Fractional Yamabe minimizers]
{Quantitative Stability for Fractional Yamabe minimizers}

\author[A. S. Aiken]{A. Sophie Aiken}

\address{(A. S. Aiken) Department of Mathematics \& Statistics,
Metropolitan State University of Denver, Denver, Colorado, 80204,
USA
}

\email{auaiken@msudenver.edu}

\author[B. Borquez]{Benjamin Borquez}

\address{(B. Borquez) Department of Mathematics,
University of California, Santa Cruz, 4111 McHenry, Santa Cruz, CA 95064,
USA
}

\email{bborquez@ucsc.edu}

\subjclass[2020]{Primary 53C21; Secondary 53C18, 35R11, 35J60.}

\keywords{Fractional Yamabe problem, Quantitative stability, Fractional conformal Laplacian, Lyapunov--Schmidt reduction.}

\date{}

\hypersetup{
    colorlinks=true,
    linkcolor=blue,
    citecolor=blue,
    urlcolor=blue,
    pdftitle={Quantitative Stability for Fractional Yamabe minimizers},
    pdfauthor={A. Sophie Aiken and Benjamin Borquez}
}

\theoremstyle{plain}
\newtheorem{theorem}{Theorem}[section]
\newtheorem{lemma}[theorem]{Lemma}
\newtheorem{proposition}[theorem]{Proposition}
\newtheorem{corollary}[theorem]{Corollary}
\newtheorem{theoremletter}{Theorem}

\theoremstyle{definition}
\newtheorem{definition}[theorem]{Definition}

\newtheorem{remark}[theorem]{Remark}

\numberwithin{equation}{section}

\begin{document}
\begin{abstract}
We study quantitative stability for the fractional Yamabe functional introduced by González and Qing. Its minimizers correspond to conformal metrics with constant fractional curvature. We prove that the deficit of the fractional Yamabe functional controls, with a suitable power, the distance to the set of minimizers. Our approach is based on a Lyapunov--Schmidt reduction for the nonlocal functional, together with the extension characterization of the fractional conformal Laplacian, which provides the regularity theory needed for the local analysis. As a consequence, we also obtain an analogous quantitative stability estimate for the associated extension functional.
\end{abstract}
\maketitle

\bigskip

\setcounter{tocdepth}{1}
\tableofcontents

\section{Introduction}
One of the central themes in conformal geometry is the study of geometric quantities that remain invariant under conformal changes of the metric. This naturally leads to conformally covariant differential operators, whose transformation laws allow one to formulate geometric PDEs that are invariant under conformal deformations.

The first fundamental example is the conformal Laplacian, which naturally arises in the classical Yamabe problem. More generally, Graham, Jenne, Mason and Sparling introduced in \cite{GJMS} a hierarchy of local conformally covariant differential operators, nowadays known as the GJMS operators. These operators generalize the conformal Laplacian to higher orders, have principal part given by powers of the Laplacian, and include among their most notable examples the conformal Laplacian and the Paneitz operator. A natural question is whether this hierarchy admits an extension to non-integer orders. Such an extension requires introducing the geometric setting of asymptotically hyperbolic manifolds.

Let $(X^{n+1},g^+)$ be a smooth Riemannian manifold with smooth boundary $M^n$. We say that $(X^{n+1},g^+)$ is \emph{conformally compact} if there exists a smooth \emph{boundary defining function} $\rho$ satisfying
\[
\rho\geq0 \text{ on }X^{n+1},\qquad
\rho=0 \text{ on }M^n,\qquad
d\rho\neq0 \text{ on }M^n,
\]
such that the compactified metric $\overline g=\rho^2g^+$ extends smoothly to $\overline X^{n+1}$ and $(\overline X^{n+1},\overline g)$ is compact. Let $\hat h=\overline g|_{TM}$ denote the metric induced on the boundary. Since different choices of defining function produce conformally equivalent boundary metrics, they determine a conformal class $[\hat h]$ on $M^n$, called the \emph{conformal infinity} of $(X^{n+1},g^+)$. Finally, $(X^{n+1},g^+)$ is said to be \emph{asymptotically hyperbolic} if it is conformally compact and its sectional curvature approaches $-1$ as $\rho\to0$.

Following the study of the spectrum of the Laplacian on asymptotically hyperbolic manifolds \cite{MazzeoMelrose,Mazzeo,Mazzeo2}, the scattering theory developed in \cite{MazzeoMelrose,GrahamZworski} provides a natural way to associate boundary operators to the geometry of $(X^{n+1},g^+)$. More precisely, given $f\in C^\infty(M)$ and $s\in\mathbb C$ with $\operatorname{Re}(s)>\frac{n}{2}$ such that $s(n-s)$ is not an $L^2$-eigenvalue of $-\Delta_{g^+}$, the generalized eigenvalue problem
\begin{align*}
    -\Delta_{g^+}u-s(n-s)u=0
    \qquad \text{in }(X^{n+1},g^+)
\end{align*}
admits a solution with asymptotic expansion
\begin{align*}
    u=\rho^{n-s}F+\rho^sG,
    \qquad
    F|_{\rho=0}=f.
\end{align*}
Using this expansion, Graham and Zworski \cite{GrahamZworski} define the meromorphic family of \emph{scattering operators} by $S(s)f:=G|_M$. In this formulation, $F|_M$ plays the role of Dirichlet data, while $G|_M$ plays the role of Neumann data. Thus, the scattering operator can be interpreted as a generalized Dirichlet-to-Neumann map.

After a suitable normalization, these give rise to the family of conformally covariant fractional GJMS operators $P_\gamma[g^+,\hat h]$, extending the classical GJMS hierarchy from integer to fractional orders. The normalized scattering operators are of order $2\gamma$, where $s=n/2 + \gamma$ and $\gamma \in (0,n/2)$. These operators have become fundamental objects in conformal geometry and geometric analysis, providing a unified framework for the study of conformally covariant equations of both local and nonlocal type; see \cite{CaseGover} for a survey. They satisfy the conformal covariance law
\begin{align}\label{eq:GJMSconformal}
    P_{\gamma}[g^+,w^{\frac{4}{n-2\gamma}}\hat h](\phi)
    =
    w^{-\frac{n+2\gamma}{n-2\gamma}}
    P_{\gamma}[g^+,\hat h](w\phi),
\end{align}
and have principal symbol equal to that of $(-\Delta_{\hat h})^\gamma$. The fractional GJMS operators interpolate several classical conformally covariant operators. In particular, when $\gamma=1$ one recovers the conformal Laplacian
\begin{align*}
    P_1^{\hat h}
    =
    -\Delta_{\hat h}
    +
    \frac{n-2}{4(n-1)}R_{\hat h},
\end{align*}
while for $\gamma=2$ one obtains the Paneitz operator, whose associated curvature is the classical $Q$-curvature. Similarly, the corresponding fractional curvature is defined by $Q_\gamma^{\hat h} = P_\gamma^{\hat h}(1)$.

The fractional Yamabe problem, first studied by Gonzalez and Qing in the seminal work \cite{GonzQing}, asks whether, given a closed manifold $(M^n,[\hat h])$, one can find a conformal metric to $\hat h$ with constant fractional curvature. Their work is based on the relation between the scattering operators and the Dirichlet-to-Neumann operators of uniformly degenerate elliptic boundary value problems established by Chang and Gonzalez in \cite{ChangGonzalez}. This provides a geometric realization of the extension method of Caffarelli and Silvestre \cite{CaffarelliSilvestre} and allows one to study the problem through a local degenerate elliptic equation on the extension manifold. The fractional Yamabe problem can therefore be viewed as a nonlocal analogue of both the classical Yamabe problem and the Escobar-Yamabe problem, corresponding to exponents $\gamma\in(0,n/2)$.

The conformal covariance property (\ref{eq:GJMSconformal}) allows one to formulate a natural curvature prescription problem. Indeed, given a representative $\hat h\in[\hat h]$, one asks whether there exists another metric in the same conformal class whose associated fractional curvature is constant. Writing the conformal metric as
\[
\hat h_w=w^{\frac{4}{n-2\gamma}}\hat h\in[\hat h],
\]
this is equivalent to finding positive solutions of
\begin{align}\label{eq:fractionalYamabe}
    P_\gamma^{\hat h}(w)
    =
    cw^{\frac{n+2\gamma}{n-2\gamma}} \quad \text{in }M,
\end{align}
for some constant $c$. The equation \eqref{eq:fractionalYamabe} is the corresponding PDE to the fractional Yamabe problem, or simply the $\gamma$-Yamabe problem. As in the classical Yamabe problem and its boundary counterpart, this equation admits a variational formulation. The associated energy is given by the total fractional curvature normalized by the volume
\begin{align*}
    \mathcal I_\gamma(\hat h)
    =
    \frac{\int_M Q_\gamma^{\hat h}\,d\sigma_{\hat h}}
    {\left(\int_M dv_{\hat h}\right)^{\frac{n-2\gamma}{n}}}.
\end{align*}
Restricting this functional to a fixed conformal class by writing $\hat h_w=w^{\frac{4}{n-2\gamma}}\hat h$, one obtains the fractional Yamabe functional
\begin{align*}
    I_\gamma[w,\hat h]
    :=
    \mathcal I_\gamma(\hat h_w)
    =
    \frac{\int_M wP_\gamma^{\hat h}(w)\,d\sigma_{\hat h}}
    {\left(\int_M w^{\frac{2n}{n-2\gamma}}\,d\sigma_{\hat h}\right)^{\frac{n-2\gamma}{n}}}.
\end{align*}
Its infimum over the conformal class defines the fractional Yamabe invariant
\begin{align*}
    \Lambda_\gamma(M,[\hat h])
    =
    \inf\{\mathcal I_\gamma(h):h\in[\hat h]\},
\end{align*}
which is a conformal invariant once the asymptotically hyperbolic metric $g^+$ is fixed.

The operator $P_\gamma^{\hat h}$ is a nonlocal operator on $M$ that is constructed as the Dirichlet-to-Neumann map for a generalized eigenvalue problem on $(X,g^+)$. It was shown in \cite{ChangGonzalez} that this generalized eigenvalue problem is equivalent to a linear degenerate elliptic problem on the compactified manifold $(\overline{X},\bar{g})$. In \cite{ChangGonzalez} they observe that it is possible to find a particular defining function $y$, called the \emph{adapted defining function}, such that when one rewrites the scattering equation obtained in $(\overline{X},\bar{g})$ for the new metric $\bar{g}^* = y^2 g^+$, one obtains a weighted Laplace equation with a nonlinear Robin boundary condition. In this direction, the variational problem is usually studied through this equivalent local extension problem which has the following energy functional
\begin{align}\label{eq:extfunctional}
    I_\gamma^*[U,\bar g^*]
    :=
    \frac{d_\gamma^*\int_X y^a|\nabla U|^2\,dv_{\bar g^*}
    +
    \int_M Q_\gamma^{\hat h}|U|^2\,d\sigma_{\hat h}}
    {\left(\int_M|U|^{2^*}\,d\sigma_{\hat h}\right)^{\frac{n-2\gamma}{n}}},
\end{align}
defined on a suitable weighted Sobolev space, and where $a = 1-2\gamma$ and $2^* = 2n/(n-2\gamma )$.
Critical points of (\ref{eq:extfunctional}) correspond to solutions to the uniformly degenerate elliptic PDE
\begin{align}\label{eq:Extensionpde}
    \begin{cases}
        -\operatorname{div}(y^a \nabla U) = 0 & \text{in } (\overline{X}, \bar{g}^*) \\
        -d_\gamma^* \lim_{y\to 0}y^a \partial_y U + Q_\gamma^{\hat{h}} U = c U^{\frac{n+2\gamma}{n-2\gamma}} & \text{on } M
    \end{cases}.
\end{align}
By the Caffarelli-Silvestre extension, the minimization problems associated to $I_\gamma$ and $I_\gamma^*$ are equivalent.

Using this extension formulation, Gonzalez and Qing proved in \cite{GonzQing} that
\[
\Lambda_\gamma(M,[\hat h])
\leq
\Lambda_\gamma(\mathbb S^n,[h_0]).
\]
where $(\mathbb S^n,[h_0])$ denotes the round sphere. Their proof adapts the classical Aubin-Escobar strategy to the fractional setting by exploiting the extension problem to glue a fractional bubble and construct suitable test functions. They also showed that the $\gamma$-Yamabe invariant is finite and that if the inequality is strict, then there exists a minimizing solution to the $\gamma$-Yamabe problem. Their approach is based on a subcritical approximation argument, in the spirit of \cite{LeeParker,Escobar2}.

\medskip

Since then, the fractional Yamabe problem has been extensively studied due to its rich analytical and geometric structure. Nevertheless, some cases remained open for many years because of the lack of a fractional analogue of the Positive Mass Theorem of Schoen and Yau \cite{SchoenYau}. In \cite{GonzQing}, existence was established in the non-umbilic case. Later, \cite{GonzWang} treated the umbilic, non-locally conformally flat case in high dimensions. In \cite{WeiMusso_Exist}, the authors developed a unified approach covering all cases where the positive mass theorem is not needed. More recently, \cite{MayerNdiaye} settled the locally conformally flat case by adapting the barycenter technique of Bahri and Coron, thus bypassing the need for a positive mass theorem. Progress towards a fractional positive mass theorem has been made in \cite{WeiMusso_Exist,MayerNdiayeAsymp,Almaraz}. For a comprehensive survey on the fractional Yamabe problem, we refer the reader to \cite{Gonz_Survey}.

Regarding the qualitative properties of the solution set of the fractional Yamabe problem, Kim, Musso and Wei proved in \cite{KimMussoWei_comp} that the set of solutions is compact in $C^2(M)$, under a convergence assumption on the scalar curvature of $(X,g^+)$ together with the non-vanishing of the second fundamental form of the conformal infinity $M$. Their proof relies on a delicate blow-up analysis on the boundary manifold $M$. On the other hand, in \cite{KimMussoWei_noncomp} the same authors constructed counterexamples showing that the solution set is, in general, noncompact in sufficiently high dimensions. We also mention the work \cite{GonzPS}, where Fang and Gonz\'alez studied the asymptotic behavior of Palais--Smale sequences associated with fractional Yamabe-type equations. These works provide a rather complete picture of the existence and qualitative behavior of solutions.

Much less is known about the quantitative structure of the set of minimizers and the stability of the fractional Yamabe functional near its minimum. Hence, a natural question one may ask is \emph{if $I_\gamma [w,\hat{h}] $ is close to the infimum $\Lambda_\gamma (M,[\hat{h}])$, does this imply that $w$ is close to a minimizer? Moreover, can this proximity be quantified in terms of the energy deficit?}

For the classical Yamabe problem this issue was addressed initially by Bianchi and Egnell in \cite{BianchiEgnell} giving a quadratic stability estimate for the round sphere. They did it via the classic Sobolev inequality in the Euclidean space, which can be easily translated to the Yamabe setting for the round sphere using stereographic projection. For the general case it was extended to any closed Riemannian manifold not conformally equivalent to the round sphere by Engelstein, Neumayer and Spolaor in \cite{Neumayer}. They combine a Lyapunov-Schmidt reduction and the \L ojasiewicz distance inequality to obtain the following result that completes the answer of quantitative stability for the classical Yamabe problem.
\begin{theoremletter}\label{Thm:Neumayer}
    Let $(M,g)$ be a closed Riemannian manifold of dimension $n\geq 3$ that is not conformally equivalent to the round sphere. Then, there exist constants $C>0$ and $\theta\geq 0$, depending on $(M,g)$, such that
    \begin{align*}
        Q(u)-Y(M,[g]) \geq C d(u,\mathcal{M})^{2+\theta} \quad \forall u \in H^1 (M,\mathbb{R}_+ ),
    \end{align*}
    where $d(-,\mathcal{M})$ is a suitable distance to the set of minimizers $\mathcal{M}$. Moreover, there exists an open dense subset $\mathcal{G}$ in the $C^2$ topology on the space of smooth conformal classes of metrics on $M$ such that if $[g]\in \mathcal{G}$ then $\theta = 0$.
\end{theoremletter}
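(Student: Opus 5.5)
The statement is Theorem~\ref{Thm:Neumayer}, the Engelstein--Neumayer--Spolaor estimate for the classical Yamabe quotient $Q(u)=\int(|\nabla u|^2+c_nR_gu^2)/\|u\|_{2^*}^2$. I would argue by contradiction and compactness, reducing first to a local statement near a single minimizer. The reduction near a minimizer then splits into a nondegenerate direction, handled by a spectral gap, and a finite-dimensional kernel direction, handled by the \L ojasiewicz inequality. Throughout I take $d(u,\mathcal M)=\inf_{v\in\mathcal M}\|u-v\|_{H^1}$ after normalizing $\|u\|_{L^{2^*}}=1$.

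\textbf{Step 1 (reduction to a neighbourhood of $\mathcal M$).} Suppose the estimate fails along a sequence $u_k$. Since $d(u,\mathcal M)$ is bounded on normalized functions, we get $Q(u_k)\to Y(M,[g])$, so $(u_k)$ is a minimizing sequence. Because $(M,g)$ is not conformal to the round sphere, Aubin and Schoen give $Y(M,[g])<Y(\mathbb S^n)$. By concentration--compactness (Struwe's decomposition), no bubbles can form, so $u_k\to u_\infty\in\mathcal M$ strongly in $H^1$. The set $\mathcal M$ is compact in $C^{2,\alpha}$: this follows from elliptic regularity of the Euler--Lagrange equation $-\Delta u+c_nR_gu=Yu^{2^*-1}$ together with the same strict inequality. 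It therefore suffices to prove the estimate for $u$ in a small $H^1$-neighbourhood of each $v\in\mathcal M$, and then cover $\mathcal M$ by finitely many such neighbourhoods.

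\textbf{Step 2 (Lyapunov--Schmidt).} Fix $v\in\mathcal M$ and restrict $Q$ to the sphere $\{\|u\|_{2^*}=1\}$. The second variation at $v$ is given by the linearized operator
\[
L_v=-\Delta+c_nR_g-(2^*-1)Yv^{2^*-2},
\]
which is Fredholm on $H^1$ (compact perturbation of $-\Delta$), with finite-dimensional kernel $K$. Minimality gives $L_v\ge0$ on the tangent space, so $L_v$ is coercive on $K^\perp$. Applying the implicit function theorem to $\Pi_{K^\perp}\nabla Q(v+\varphi+w)=0$, I obtain a real-analytic map $\varphi\mapsto w(\varphi)\in K^\perp$ for small $\varphi\in K$. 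The resulting reduced functional $f(\varphi)=Q(v+\varphi+w(\varphi))$ is analytic because $t\mapsto |t|^{2^*}$ is analytic near the positive function $v$. Writing a general $u$ as $u=v+\varphi+w(\varphi)+\psi$ with $\psi\in K^\perp$, the coercivity of $L_v$ gives
\[
Q(u)-f(\varphi)\ge c\|\psi\|^2 .
\]

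\textbf{Step 3 (\L ojasiewicz; main obstacle).} It remains to control $f(\varphi)-Y$ from below by a power of the distance in the finite-dimensional space $K$. The \L ojasiewicz distance inequality for the analytic function $f$ with minimum value $Y$ gives
\[
f(\varphi)-Y\ge c\,\operatorname{dist}(\varphi,\{f=Y\})^{2+\theta}.
\]
Here $\{f=Y\}$ parametrizes $\mathcal M$ near $v$, so this distance is comparable to the distance to $\mathcal M$. The delicate part is combining the two estimates with compatible exponents via the triangle inequality. Concretely, one must check that
\[
d(u,\mathcal M)\lesssim\|\psi\|+\operatorname{dist}(\varphi,\{f=Y\}),
\]
and use $\|\psi\|^2\ge\|\psi\|^{2+\theta}$ for small $\|\psi\|$. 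One must also verify that the error from $w(\varphi)$ is Lipschitz, which is immediate from the implicit function theorem.

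\textbf{Step 4 (genericity).} If $K$ consists only of directions tangent to $\mathcal M$, then $f$ is constant on its zero set and Morse--Bott, and the \L ojasiewicz exponent is $\theta=0$. Nondegeneracy of this kind is open under perturbation of the metric. Density follows from a transversality argument: perturbing the metric conformally or generically removes the kernel, following the bumpy-metric theorem of White for the Yamabe problem. This produces the open dense set $\mathcal G$.
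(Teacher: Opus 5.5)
Your Steps 2--3 are the Engelstein--Neumayer--Spolaor scheme that the paper transplants to the fractional setting: a spectral gap on $K^\perp$ for $u-P(u)$, \L{}ojasiewicz for the analytic reduced functional, Lipschitz control of $w$, and $\|\psi\|^2\ge\|\psi\|^{2+\theta}$. So the local core is fine, provided the implicit function theorem is run in $C^{2,\alpha}$, where positivity near $v$ makes the nonlinearity analytic.

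\textbf{First error: the distance.} On $\{\|u\|_{L^{2^*}}=1\}$ the $H^1$ norm is unbounded, so $d(u,\mathcal M)=\inf_{v\in\mathcal M}\|u-v\|_{H^1}$ is not bounded, and ``hence $Q(u_k)\to Y(M,[g])$'' does not follow. Worse, for such $u$ with $\|\nabla u\|_{L^2}\to\infty$ one has $d(u,\mathcal M)\gtrsim\|\nabla u\|_{L^2}$; this follows by comparing the $H^1$ and $L^{2^*}$ norms of $u-cv_0$. On the other hand, $Q(u)-Y\lesssim\|\nabla u\|_{L^2}^2$. So with your $d$ the inequality is false whenever $\theta>0$ is needed, and the paper recalls examples where it is. The fix is the normalization used in ENS and in Theorem~\ref{thm:mainv1}: divide by $\|u\|_{H^1}$. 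Since $\mathcal M$ is invariant under positive scalings, this gives $d\le 1$. Away from $\mathcal M$ the estimate then follows from a uniform bound $Q(u)-Y\ge\varepsilon$, which compactness of minimizing sequences supplies. Near $\mathcal M$, the Sobolev bound $\|u\|_{H^1}\ge c$ lets your local estimate pass to the normalized distance.

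\textbf{Second gap: Step 4 does not prove the genericity claim.} The quotient, its minimizers and $\dim K$ depend only on $[g]$. For $\tilde g=w^{4/(n-2)}g$ one has $L^{\tilde g}_v\phi=w^{-\frac{n+2}{n-2}}L^{g}_{wv}(w\phi)$, so kernels correspond under $\phi\mapsto w\phi$. Hence no conformal perturbation can remove $K$; one has to move the conformal class. That an open dense set of classes has only nondegenerate minimizers is a genuine theorem: Schoen's generic nondegeneracy result (see also Anderson), which the paper names as the ingredient in ENS. White's bumpy metric theorem concerns minimal submanifolds. A Sard--Smale argument here would require showing that variations of the conformal class reach every direction of the cokernel $K$, and that is exactly the work you skip.

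Also, define $\mathcal G$ by strict nondegeneracy, i.e.\ $K=\{0\}$ at every minimizer. This condition is open, by compactness of minimizers under $C^2$ perturbations of $[g]$ together with the implicit function theorem. The integrable (``Morse--Bott'') condition you invoke is not known to be open. Finally, in the integrable case the relevant fact is that $f$ is constant on a whole neighbourhood of $0$ in $K$ (the classical analogue of Lemma~\ref{lemma:integrable}), so $f(\varphi)-Y\equiv 0$. Saying that $f$ is constant on its zero set is vacuous.
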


The part where they show that the metrics that satisfy quadratic stability are dense in the space of conformal classes, what we call generic, relies on a result by R. Schoen \cite{SchoenGeneric}, see also \cite{Anderson}. However, even though \emph{most} smooth conformal classes satisfy quadratic stability, it was proved that the result is sharp in the sense that there are explicit examples of manifolds with positive exponent $\theta$, see for instance \cite{FrankSharpSob,FrankDegStab}.

\medskip

For the Yamabe problem with boundary \cite{Escobar2}, or Escobar-Yamabe problem, the question of quantitative stability was initially addressed by P. T. Ho in \cite{Ho}. He proved that in the model case, the Euclidean ball, the Sobolev trace inequality, studied by Escobar in \cite{Escobarsharp}, also enjoys quadratic stability just as the closed case proven by Bianchi and Egnell. The general version of the quantitative stability was proved by the second author together with Caju and Van Den Bosch in \cite{CajuVDB}, and they use the same techniques as \cite{Neumayer} to show the general result for the Escobar-Yamabe problem on compact manifolds with boundary. More precisely, they deal with the version of Escobar-Yamabe problem that, given a compact Riemannian manifold with boundary $(M,g)$, aims to find a conformal metric with zero scalar curvature and constant mean curvature on the boundary. The PDE that models the problem is
\begin{align}\label{eq:EscobarPDE}
    \begin{cases}
        -\frac{4(n-1)}{n-2} \Delta_g u + R_g u = 0 & \text{in } M \\
        \frac{4(n-1)}{n-2}\frac{\partial u}{\partial \nu} + 2(n-1) H_g u = 2(n-1) c u^{\frac{n}{n-2}} & \text{on } \partial M \, 
    \end{cases},
\end{align}
where $c$ is a constant, $R_g$ and $H_g$ are the scalar and mean curvature with respect to the metric $g$, respectively, and $\nu$ the outward unit normal to $\partial M$. In fact, if $u$ is a solution to (\ref{eq:EscobarPDE}) then the metric $\tilde{g} = u^{4/(n-2)}g$ is scalar flat and has constant mean curvature equal to $c$. As mentioned in \cite{GonzQing}, this is a particular case of the $\gamma$-Yamabe problem when $\gamma = 1/2$. The idea of the proof of this boundary version is to restrict the local analysis around a minimizer of a boundary functional, associated to the Dirichlet-to-Neumann operator, and then extend to the interior using a harmonic extension (in a conformal sense).

As seen in the other Yamabe problems, it is natural to start to study the question by analyzing what happens for the sharp Sobolev inequality on the model case. In that regard, the $\gamma$-Yamabe problem has two Sobolev inequalities that are relevant. The first one, and most natural, is the fractional Sobolev inequality on the Euclidean space $\mathbb{R}^n$. In a work that extends the one from Bianchi and Egnell to the fractional setting, Chen, Frank and Weth proved in \cite{ChenFrankWeth} that there exists a constant $C$, depending on the dimension and $\gamma$, such that
\begin{align}\label{Intro:sphere}
    I_\gamma [w,h_0] - \Lambda_\gamma (\mathbb{S}^n, [h_0] ) \geq C \left( \frac{\inf \{ \|w - v\|_{H^{\gamma } (\mathbb{S}^n)} : v \in \mathcal{M}_{\mathbb{S}^n}\} }{ \|w \|_{H^{\gamma } (\mathbb{S}^n)}}  \right)^2 ,
\end{align}
for any nonnegative $w\in H^{\gamma } (\mathbb{S}^n)$. Here $\mathcal{M}_{\mathbb{S}^n}$ is the family of minimizers on the round sphere. 

The second inequality that is relevant to the problem is the weighted Sobolev trace inequality on the half-space $\mathbb{R}_+^{n+1}$. This inequality is in fact deeply connected to the fractional one on $\mathbb{R}^{n}$ by the Caffarelli-Silvestre extension. Moreover, its minimizers are just convolutions of the fractional bubbles with the Poisson kernel. Using the mentioned extension it is not difficult to prove the quantitative stability for the weighted Sobolev trace inequality, see Section \ref{Section:2}.

In this paper we address the question of quantitative stability of the minimizers of the fractional Yamabe problem on any asymptotically hyperbolic manifold $(X^{n+1},g^+)$ with conformal infinity $(M^n , [\hat{h}])$ of dimension $n\geq 3$, such that $\Lambda_\gamma (M,[\hat{h}]) < \Lambda_\gamma (\mathbb{S}^n , [h_0])$. Just as for the closed case, the minimizers of this general problem are not known explicitly. However, unlike the minimizers of the Sobolev inequality that are explicit but non-compact, here we do have compactness of any minimizing sequence, see Appendix \ref{appendix:B}. The main result of this paper is as follows.

\begin{theorem}[Main Theorem]\label{thm:mainv1}
    Let $\gamma \in (0,1)$ and $(M^n , [\hat{h}])$ be the conformal infinity of an asymptotically hyperbolic manifold $(X^{n+1},g^+ )$ such that $\Lambda_{\gamma} (M,[\hat{h}]) < \Lambda_{\gamma} (\mathbb{S}^n ,[h_{0}])$. Then, there exist constants $C>0$ and $\theta \geq 0$ such that
    \begin{align*}
        I_{\gamma} [w,\hat{h}]  - \Lambda_{\gamma} (M,[\hat{h}]) \geq C d(w,\mathcal{M})^{2+\theta} \quad \forall w \in H^{\gamma}(M,\mathbb{R}_+),
    \end{align*}
    where
    \begin{align*}
        d(w,\mathcal{M}) := \frac{\inf \{\|w-u\|_{H^{\gamma}(M)} : u \in \mathcal{M} \} }{\|w\|_{H^{\gamma}(M)}},
    \end{align*}
    and $\mathcal{M}$ is the set of minimizers of $I_{\gamma}[-,\hat{h}]$.
\end{theorem}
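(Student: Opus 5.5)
We follow the strategy of \cite{Neumayer} and \cite{CajuVDB}: reduce to a local estimate near $\mathcal{M}$ via compactness, then prove the local estimate with a Lyapunov--Schmidt reduction and the \L ojasiewicz inequality.

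\textbf{Reduction to a local estimate.} The functional $I_\gamma[-,\hat h]$ is $0$-homogeneous and the distance $d(w,\mathcal M)$ is scale invariant, so we may normalize $\|w\|_{L^{2^*}(M)}=1$. We argue by contradiction. Suppose there were $w_k\geq 0$ with $I_\gamma[w_k,\hat h]-\Lambda_\gamma\to 0$ and $d(w_k,\mathcal M)\geq \delta>0$. Since $\Lambda_\gamma(M,[\hat h])<\Lambda_\gamma(\mathbb S^n,[h_0])$, the compactness of minimizing sequences (Appendix \ref{appendix:B}) gives strong $H^\gamma$ convergence of a subsequence to a minimizer, which contradicts $d(w_k,\mathcal M)\geq\delta$. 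If instead the deficit stays bounded below by some $\varepsilon_0$, the estimate is trivial because $d(w,\mathcal M)\leq C$; here we use that $\mathcal M$ is bounded in the normalization and $\|w\|_{H^\gamma}\gtrsim 1$ by Sobolev. It therefore suffices to prove $I_\gamma[w]-\Lambda_\gamma\geq C\|w-u\|_{H^\gamma}^{2+\theta}$ for $w$ in a small $H^\gamma$-neighborhood of a fixed minimizer $u$. The set $\mathcal M$ is compact modulo scaling, so a finite cover together with a uniform choice of $\theta$ gives the global statement.

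\textbf{Local analysis.} Fix a positive minimizer $u$ and normalize it so that $P_\gamma^{\hat h}u=\Lambda u^{\frac{n+2\gamma}{n-2\gamma}}$. The Hessian of $I_\gamma$ at $u$ is, up to a constant, the quadratic form of
\[
L_u=P_\gamma^{\hat h}-\tfrac{n+2\gamma}{n-2\gamma}\Lambda u^{\frac{4}{n-2\gamma}} .
\]
This is a compact perturbation of $P_\gamma^{\hat h}$, which is elliptic of order $2\gamma$, so it is Fredholm with a finite-dimensional kernel $K$, and it is coercive on $K^\perp$ modulo the scaling direction. We solve the projected Euler--Lagrange equation $\Pi_{K^\perp}\nabla I_\gamma(u+v+\phi(v))=0$ for $v\in K$ small by the implicit function theorem, which gives a map $\phi:K\to K^\perp$. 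The main regularity input is the extension characterization \eqref{eq:Extensionpde}, combined with Moser iteration and Schauder-type estimates for the weighted operator $\operatorname{div}(y^a\nabla\cdot)$. These give $L^\infty$ and $C^{\alpha}$ bounds on $u$, which is bounded away from zero, and on $u+v+\phi(v)$. With these bounds the nonlinearity $t\mapsto t^{\frac{n+2\gamma}{n-2\gamma}}$ is real analytic near $u$, so $\phi$ is analytic and the reduced functional $f(v)=I_\gamma(u+v+\phi(v))$ is a real-analytic function on the finite-dimensional space $K$.

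\textbf{Deficit estimate.} For $w$ near $u$, write $w=u+v+\phi(v)+\psi$ with $v\in K$ and $\psi\in K^\perp$. Coercivity of $L_u$ on $K^\perp$ gives
\[
I_\gamma(w)-f(v)\geq c\|\psi\|_{H^\gamma}^2 .
\]
This uses a Taylor expansion of $I_\gamma$ at the point $u+v+\phi(v)$, where the $K^\perp$-component of the gradient vanishes; the $K$-component is $O(|\nabla f(v)|)$, and the cross term it produces must be absorbed. For the reduced functional, the \L ojasiewicz distance inequality for analytic $f$ with minimum value $\Lambda_\gamma$ gives
\[
f(v)-\Lambda_\gamma\geq c\,\operatorname{dist}(v,\{f=\Lambda_\gamma\})^{2+\theta}.
\]
Points of $\{f=\Lambda_\gamma\}$ correspond to minimizers $u+v'+\phi(v')$, and $\phi$ is Lipschitz, so combining the two estimates yields $I_\gamma(w)-\Lambda_\gamma\gtrsim d(w,\mathcal M)^{2+\theta}$.

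\textbf{Main obstacle.} We expect the hardest step to be the regularity needed to make the reduction analytic. The functional is nonlocal, and $H^\gamma$ alone does not keep $u+v+\phi(v)$ positive or make the power nonlinearity analytic. The plan is to run the implicit function theorem in a space such as $C^{2\gamma+\alpha}(M)$, or in the trace space of weighted $C^{\alpha}$ extensions. There we would need Schauder estimates for the degenerate Robin problem and a bound $\|\phi(v)\|_{C^{\alpha}}\lesssim |v|$. We would also have to check that the cross term between $\psi$ and $\nabla f(v)$ is absorbed; it is controlled by $\|\psi\|\,|\nabla f(v)|$ and handled by Young's inequality. Finally, nonnegativity of $w$ is used only in the compactness step, to rule out convergence to $-u$ or to sign-changing critical points.
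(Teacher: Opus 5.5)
Your proposal follows the paper's argument: compactness of minimizing sequences plus a finite cover of $\mathcal M$ reduces the theorem to a local estimate, which comes from a Lyapunov--Schmidt reduction run in $C^{2\gamma+\alpha}$ (with Schauder estimates via the extension), coercivity of the Hessian on $K^\perp$ for the transverse part $\psi$, and the \L ojasiewicz inequality for the analytic reduced functional combined with the Lipschitz bound on the reduction map $\phi$. Two small corrections: the local target must be the distance to nearby minimizers rather than $\|w-u\|_{H^\gamma}$ (the latter fails when $u$ is degenerate but integrable, though your deficit estimate already uses the right quantity), and the first-order cross term vanishes exactly, since the gradient at $u+v+\phi(v)$ lies in $K$ while $\psi$ is $L^2$-orthogonal to $K$, so no Young absorption is needed.
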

Our approach is, in some sense, the analog of that in \cite{CajuVDB}. In the boundary Yamabe problem, the variational problem is naturally posed on a manifold with boundary, and the Lyapunov--Schmidt reduction is carried out for an effective functional defined on the boundary. In the fractional Yamabe problem, although the original variational problem is formulated on the closed manifold $M$, it is governed by a nonlocal operator. Nevertheless, we show that the same variational strategy can still be carried out in this setting. More precisely, we perform the Lyapunov--Schmidt reduction directly for the fractional Yamabe functional, using the extension characterization of the fractional conformal Laplacian only to obtain the regularity estimates required in the analysis. As a consequence of the equivalence between the two minimization problems, the corresponding quantitative stability estimate for the extension functional follows immediately.

\medskip

The main novelty of this work is the extension of the quantitative stability theory for the Yamabe problem to the fractional setting. Although the overall strategy follows the variational approach developed in \cite{Neumayer,CajuVDB}, several new analytical difficulties arise from the nonlocal nature of the problem. In particular, the Hessian is represented by the fractional conformal Laplacian rather than a uniformly elliptic differential operator, making the classical elliptic theory unavailable. To overcome this difficulty, we combine the extension characterization of Chang and Gonz\'alez with the regularity theory for uniformly degenerate elliptic equations, allowing us to carry out the Lyapunov--Schmidt reduction in the fractional setting. To the best of our knowledge, this provides the first quantitative stability result for the fractional Yamabe problem.

As in the classical and boundary Yamabe problems, the stability estimate does not merely control the distance to the set of minimizers by the energy deficit, but also provides an explicit rate at which this distance decays as the deficit tends to zero. The formulation above, however, depends on the choice of representative metric $\hat{h}\in[\hat{h}]$, since both the functional and the Sobolev norm are written with respect to $\hat{h}$. It is therefore natural to reformulate the estimate in terms of Riemannian metrics, obtaining an equivalent conformally invariant statement. This provides a more geometric interpretation of the quantitative stability inequality and is the formulation stated in Corollary \ref{Cor:conformal}. To do so, we first introduce some additional notation.

\medskip

Indeed, define the following conformally invariant distance between two metrics in the same conformal class
\begin{align*}
    \|\hat{h}_u - \hat{h}_v\| := \left( \int_M |u - v|^{2^*} d\sigma_{\hat{h}} \right)^{1/2^*} ,
\end{align*}
where we identify $\hat{h}_u = u^{4/(n-2\gamma)} \hat{h} \in [\hat{h}]$ and its conformal factor $u \in H^\gamma (M)$. It can be shown that $\|-\|$ is independent of the choice of conformal representative. Also, when $\Lambda_\gamma (M,[\hat{h}]) > 0$, we can define
\begin{align*}
    \|\hat{h}_u - \hat{h}_v\|_* := \left( \int_M (u-v) P_\gamma^{\hat{h}} (u-v) d\sigma_{\hat{h}}   \right)^{1/2} \, ,
\end{align*}
where $\hat{h} \in \mathcal{M}(M,[\hat{h}])$ with unit volume. Again, the quantity $\|-\|_*$ is independent of the choice of conformal representative.

\begin{corollary}[Conformal quantitative stability]\label{Cor:conformal} Let $(M^n , [\hat{h}])$ be the conformal infinity of an asymptotically hyperbolic manifold $(X^{n+1},g^+ )$ such that $\Lambda_{\gamma} (M,[\hat{h}]) < \Lambda_{\gamma} (\mathbb{S}^n ,[h_{0}])$. Then, there exist constants $C>0$ and $\theta \geq 0$, depending on $M$ and $[\hat{h}]$, such that
\begin{align}\label{eq:conformal1}
    \mathcal{I}_\gamma (h) - \Lambda_{\gamma} (M,[\hat{h}]) \geq C \left( \frac{\inf\{\|h - \tilde{h}\|: \tilde{h} \in \mathcal{M}  \} }{\operatorname{vol}_{h}(M)^{1/2^*} }   \right)^{2+\theta} \quad \forall h \in [\hat{h}].
\end{align}
Moreover, when $\Lambda_\gamma (M,[\hat{h}]) > 0$ and the deficit $\mathcal{I}_\gamma (h) - \Lambda_{\gamma} (M,[\hat{h}]) \leq 1$, there exist constants $C>0$ and $\theta \geq 0$, depending on $M$ and $[\hat{h}]$, such that
\begin{align}\label{eq:conformal2}
    \mathcal{I}_\gamma (h) - \Lambda_{\gamma} (M,[\hat{h}]) \geq C \left( \frac{\inf\{\|h - \tilde{h}\|_*: \tilde{h} \in \mathcal{M}  \} }{\operatorname{vol}_{h}(M)^{1/2^*} }   \right)^{2+\theta} \quad \forall h \in [\hat{h}].
\end{align}
\end{corollary}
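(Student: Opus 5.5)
Both estimates will be derived from Theorem~\ref{thm:mainv1} by expressing everything through conformal factors relative to a fixed representative. I fix a minimizer in $\mathcal{M}$ and use it as the background metric. For \eqref{eq:conformal2} I normalize it to unit volume, as in the definition of $\|-\|_*$; this is harmless because $\mathcal{I}_\gamma$, $\Lambda_\gamma$, $\|-\|$ and $\|-\|_*$ do not depend on the representative. Relative to this background $\hat h$, I write $h=\hat h_w=w^{4/(n-2\gamma)}\hat h$ with $w>0$, so that $\mathcal{I}_\gamma(h)=I_\gamma[w,\hat h]$. Minimizing metrics are $\tilde h=\hat h_u$ with $u\in\mathcal{M}$, and $\operatorname{vol}_h(M)^{1/2^*}=\|w\|_{L^{2^*}}$. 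The left-hand sides of the two statements therefore coincide with that of Theorem~\ref{thm:mainv1}, and what remains is to compare the normalized distances.

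For \eqref{eq:conformal1}, the quantity $\inf_{\tilde h}\|h-\tilde h\|$ equals $\inf_{u\in\mathcal M}\|w-u\|_{L^{2^*}}$. The fractional Sobolev embedding $H^\gamma(M)\hookrightarrow L^{2^*}(M)$ gives $\|w-u\|_{L^{2^*}}\le C\|w-u\|_{H^\gamma}$, which bounds the numerator. The difficulty is the denominator: Theorem~\ref{thm:mainv1} normalizes by $\|w\|_{H^\gamma}$, while the corollary normalizes by the smaller quantity $\|w\|_{L^{2^*}}$.

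I handle this with a case split. Both sides are $0$-homogeneous in $w$, so I may assume $\|w\|_{L^{2^*}}=1$.
\begin{itemize}
\item If the deficit is at least some $\delta_0>0$, the conclusion is trivial. Indeed, $\inf_u\|w-u\|_{L^{2^*}}\le\|w\|_{L^{2^*}}+\|u_0\|_{L^{2^*}}$ for any fixed $u_0$, so the right-hand side is bounded by a constant, and shrinking $C$ gives the estimate.
\item If the deficit is less than $\delta_0$, then $\int wP_\gamma w\le\Lambda_\gamma+\delta_0$. Combined with the coercivity estimate $\|w\|_{H^\gamma}^2\le C\bigl(\int wP_\gamma w+\|w\|_{L^2}^2\bigr)$, coming from the principal symbol of $P_\gamma$ and the lower-order term being bounded, and with $\|w\|_{L^2}\le C\|w\|_{L^{2^*}}=C$, this gives $\|w\|_{H^\gamma}\le C'$.
\end{itemize}
In the second case, Theorem~\ref{thm:mainv1} yields
\[
\text{deficit}\ge C\left(\frac{\inf_u\|w-u\|_{H^\gamma}}{\|w\|_{H^\gamma}}\right)^{2+\theta}\ge \frac{C}{(C')^{2+\theta}}\,\bigl(\inf_u\|w-u\|_{L^{2^*}}\bigr)^{2+\theta},
\]
which is \eqref{eq:conformal1}.

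For \eqref{eq:conformal2}, $\Lambda_\gamma>0$, so $P_\gamma^{\hat h}$ is coercive (its quadratic form dominates $\Lambda_\gamma\|\cdot\|_{L^{2^*}}^2$ and the lowest eigenvalue is positive). Hence $\|\hat h_u-\hat h_v\|_*\simeq\|u-v\|_{H^\gamma}$, and the numerator is comparable to the one in Theorem~\ref{thm:mainv1}. The normalization is treated as before. The hypothesis that the deficit is at most $1$ plays the role of $\delta_0$ and gives $\int wP_\gamma w\le(\Lambda_\gamma+1)\|w\|_{L^{2^*}}^2$, so $\|w\|_{H^\gamma}\le C\|w\|_{L^{2^*}}=C\operatorname{vol}_h^{1/2^*}$, and \eqref{eq:conformal2} follows.

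The only point that needs care is checking that all constants depend only on $(M,[\hat h])$. This holds because the chosen minimizer $\hat h$ is fixed once and for all.
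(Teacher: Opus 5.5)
Your proof is correct and follows essentially the paper's route: Theorem~\ref{thm:mainv1}, the Sobolev embedding for the numerator, a two-case split to pass from the $H^\gamma$ normalization to $\operatorname{vol}_h(M)^{1/2^*}=\|w\|_{L^{2^*}}$, and coercivity of $P_\gamma^{\hat h}$ when $\Lambda_\gamma>0$ for \eqref{eq:conformal2}. The differences are minor. You split on the size of the deficit and use a G\aa rding bound to get $\|w\|_{H^\gamma}\lesssim\|w\|_{L^{2^*}}$ on the small-deficit set, which also makes explicit how the hypothesis that the deficit is at most $1$ enters \eqref{eq:conformal2}; the paper instead splits on $d(u,\mathcal M)$ and compares the denominators near minimizers. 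You also take the representative-independence of $\|-\|_*$ as given, whereas the paper verifies it from \eqref{eq:GJMSconformal}.
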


Just as in Theorem \ref{Thm:Neumayer}, we expect the quadratic stability regime ($\theta=0$) to hold generically. Indeed, quadratic stability follows whenever all minimizers are either non-degenerate or integrable (see Definition \ref{def:integrable}). The latter includes the situations in which the non-uniqueness of minimizers is induced by the symmetries of the problem. This was proved for the Q-curvature case in \cite{Qcurv}.

On the other hand, for particular conformal classes one expects the exponent $\theta$ to be strictly positive. This phenomenon is governed by the \emph{Adams--Simon positivity condition} (see Definition \ref{def:AS}), which is formulated in terms of the first nontrivial homogeneous term in the analytic expansion of the finite-dimensional reduced functional arising from the Lyapunov--Schmidt reduction. As in the classical Yamabe problem, this condition determines the rate at which the energy grows near degenerate minimizers. We establish the following result.

\begin{proposition}[$\text{AS}_p$ implies $\theta>0$]\label{Proposition:AS} Let $(M,[\hat{h}])$ be the conformal infinity of an asymptotically hyperbolic manifold $(X^{n+1},g^+)$, and $p\geq 3$. Let $u_0$ be a non-integrable minimizer of the fractional Yamabe energy and suppose that it satisfies the $\text{AS}_p$ condition. Then, there exists a sequence $(u_k)_{k\in \mathbb{N}} \subset H^\gamma (M)$ with $u_k \to u_0$ in $H^\gamma (M)$ such that 
\begin{align*}
    \lim_{k\to \infty}\frac{I_\gamma [u_k,\hat{h}] -\Lambda_\gamma (M,[\hat{h}])}{\|u_k - u_0\|_{H^\gamma (M)}^{p-\alpha} } = 0 \quad \forall \alpha>0.
\end{align*}
\end{proposition}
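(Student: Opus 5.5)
We follow the scheme of the analogous results in \cite{Neumayer,CajuVDB} and build the sequence $u_k$ inside the finite-dimensional reduction around $u_0$. Normalize $u_0$ to have unit $L^{2^*}$ norm. Let $L=P_\gamma^{\hat h}-\tfrac{n+2\gamma}{n-2\gamma}\Lambda_\gamma u_0^{\frac{4\gamma}{n-2\gamma}}$ be the second variation of the constrained functional at $u_0$, and let $K=\ker L$. By the regularity theory obtained through the Chang--González extension, $K$ is finite dimensional and consists of smooth functions. Lyapunov--Schmidt reduction gives a neighborhood $V\subset K$ of $0$ and an analytic map $\Phi:V\to K^\perp$ with $\Phi(0)=0$ and $D\Phi(0)=0$, determined by the condition that the $K^\perp$-projection of the Euler--Lagrange operator vanishes at $u_0+v+\Phi(v)$. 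The reduced functional $f(v)=I_\gamma[u_0+v+\Phi(v),\hat h]$ is then real analytic on $V$, with $f(0)=\Lambda_\gamma(M,[\hat h])$ and $\nabla f(0)=0$. Because $u_0$ is non-integrable, $f$ is not constant near $0$. Expand
\[
f(v)-f(0)=\sum_{j\ge 3} f_j(v)
\]
into homogeneous polynomials; there is no quadratic term, since $L$ vanishes on $K$. The $\text{AS}_p$ condition (Definition \ref{def:AS}) then says that $f_p$ is the first nonzero term and that $f_p|_{\mathbb S_K}$ attains a minimum of zero at some unit vector $\xi\in K$. In other words, $f_p\ge 0$ and $f_p(\xi)=0$; we also keep whatever nondegeneracy statement the definition supplies.

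For the sequence itself, take $v_k=t_k\xi$ with $t_k\downarrow 0$ and set $u_k=u_0+v_k+\Phi(v_k)$. Positivity of $u_k$ is automatic for small $t_k$, because $u_0>0$ is smooth and $\Phi$ is continuous into $C^0$ by the regularity estimates. Since $\|\Phi(v)\|_{H^\gamma}=O(|v|^2)$, we get $\|u_k-u_0\|_{H^\gamma}\simeq t_k$, and clearly $u_k\to u_0$. Along the ray the deficit is
\[
I_\gamma[u_k,\hat h]-\Lambda_\gamma=\sum_{j>p}f_j(\xi)t_k^{\,j}=O(t_k^{p+1}),
\]
using $f_j=0$ for $j<p$ and $f_p(\xi)=0$. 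Dividing by $t_k^{p-\alpha}$ gives $O(t_k^{1+\alpha})\to 0$, which in fact proves a little more than the statement asks for.

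The main obstacle is establishing the analytic reduced functional rigorously in the nonlocal setting. This requires that $I_\gamma$ be analytic on a neighborhood of the positive smooth $u_0$ in a Banach space such as $C^{2\gamma+\beta}$, or $H^\gamma\cap L^\infty$, on which $u\mapsto u^{\frac{n+2\gamma}{n-2\gamma}}$ is analytic. It also requires that the projected equation be solvable for $\Phi$ by the implicit function theorem. That in turn needs invertibility of $L$ on $K^\perp$ together with Schauder-type estimates for $P_\gamma^{\hat h}$, which we would take from the extension problem \eqref{eq:Extensionpde} and the degenerate elliptic regularity already used in the paper. Provided those ingredients are stated earlier, the remaining argument is the short computation above. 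If instead the $\text{AS}_p$ definition only gives $\inf_{\mathbb S_K}f_p=0$ without attainment, we would choose directions $\xi_k$ with $f_p(\xi_k)\to 0$ and take $t_k$ small relative to them. Compactness of the unit sphere of $K$ still yields a limit direction where $f_p$ vanishes, so the ray construction above applies.
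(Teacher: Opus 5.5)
Your ray construction is the paper's: lift $t\xi\in K$ to $u_t=u_0+t\xi+F(t\xi)$ (your $\Phi$), use $\|u_t-u_0\|_{H^\gamma}\sim t$, and expand $q(t\xi)-q(0)$. The gap is in the hypothesis you feed into it. Definition~\ref{def:AS} says that $q_p|_{\mathbb{S}^{l-1}}$ (your $f_p|_{\mathbb S_K}$) attains a \emph{positive maximum} at some $v$, and the paper takes $\xi=v$. It does not give a unit vector with $q_p(\xi)=0$, and such a vector need not exist. Minimality of $u_0$ only forces $q_p\ge 0$ with $q_p\not\equiv 0$. For instance, if $l=\dim K=1$, then $q(x)-q(0)=c_px^p+O(|x|^{p+1})$ with $p$ even and $c_p>0$, so $q_p>0$ on $\mathbb S^0$. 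In that case your choice of $\xi$ is unavailable. So are your fallback via $\inf_{\mathbb S_K}f_p=0$ and the bound $O(t_k^{p+1})$ on the deficit that you derive from it.

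The repair is immediate, because you never needed $f_p(\xi)=0$. Take any unit $\xi\in K$, in particular the maximizer $v$. Since $f_j=0$ for $j<p$, the deficit is $t^pf_p(\xi)+O(t^{p+1})=O(t^p)$. Dividing by $\|u_t-u_0\|_{H^\gamma}^{p-\alpha}\sim t^{p-\alpha}$ gives $O(t^\alpha)\to0$, which is exactly the paper's proof.

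It is also worth seeing why the positive-maximum direction is the meaningful choice. Along a zero direction of $q_p$ the points $u_t$ may themselves be minimizers, with deficit identically zero. The displayed limit then holds trivially and says nothing about $\theta$. With $q_p(v)>0$, the deficit is exactly of order $t^p$, while $tv$ stays at distance $\gtrsim t$ from the level set $\{q=q(0)\}$. That is what one needs to conclude $2+\theta\ge p$.
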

\begin{remark}
The existence of explicit degenerate non-integrable minimizers satisfying the $\mathrm{AS}_p$ condition, for some $p\geq 4$, remains an interesting open problem. To the best of our knowledge, no such examples are currently known, even in the boundary Yamabe setting studied in \cite{CajuVDB}. In particular, although the Adams--Simon condition has proved useful in understanding the asymptotic behavior of the Yamabe flow, explicit minimizing metrics satisfying this condition have not yet been constructed.

The situation is even less developed for the fractional Yamabe problem. Existing works on the fractional Yamabe flow do not perform a Lyapunov--Schmidt reduction near degenerate critical points, and consequently notions such as integrability, order of integrability and the Adams--Simon positivity condition have not been investigated in this setting. We expect that understanding these degenerate minimizers will play an important role in the study of the long-time behavior of the fractional Yamabe flow and related variational problems.
\end{remark}

\subsection{Organization of the paper}

In Section \ref{Section:2} we use the result of \cite{ChenFrankWeth} to establish the quantitative stability in the model case. In Section \ref{Section:3} we develop the variational framework needed for the proof of the main results. This includes the Lyapunov--Schmidt reduction and the analysis of the functional in the integrable setting. In Section \ref{Section:4} we first prove the local stability estimate and then deduce the global quantitative stability. Finally, we prove Corollary \ref{Cor:conformal} and Proposition \ref{Proposition:AS}. Appendix \ref{Appendix:A} contains the proof of Lemma \ref{Lemma:LS}, while Appendix \ref{appendix:B} contains the proof of Lemma \ref{lemma:compacntess}.

\subsection{Acknowledgments}

Both authors thank Jie Qing for suggesting this problem and for many helpful discussions. The second author also thanks Rayssa Caju, Hanne Van Den Bosch, and Pedro Gaspar for valuable discussions during his visit to the University of Chile. Finally, the second author thanks Sergio Almaraz for suggesting several useful references related to this work.

\section{The model case}\label{Section:2}

In this section we establish the quantitative stability for the model fractional Yamabe problem. This case deserves a separate treatment for two reasons. First, the set of minimizers is noncompact due to the invariance of the problem under translations and dilations, so the local Lyapunov--Schmidt analysis developed in the following section cannot be applied directly. Second, the model case provides the sharp inequalities that determine the optimal constant in the fractional Yamabe problem on general manifolds, making it the natural starting point for the analysis.

\medskip

As in the classical and boundary Yamabe problems, the strategy is to exploit the explicit characterization of the extremal functions of the relevant Sobolev inequalities. In the closed Yamabe problem these extremals arise from the sharp Sobolev inequality on $\mathbb{R}^n$, while in the boundary Yamabe problem they are related to the sharp Sobolev trace inequality on the half-space. The fractional Yamabe problem follows the same philosophy: the extremal functions are first identified in the Euclidean setting and then transferred to the geometric problem through the conformal covariance of the fractional conformal Laplacian.

\medskip

The approach of González and Qing \cite{GonzQing} closely parallels Escobar's treatment of the boundary Yamabe problem. A fundamental ingredient is the sharp weighted Sobolev trace inequality associated with the Caffarelli--Silvestre extension, whose extremal functions coincide with the extensions of the extremals of the sharp fractional Sobolev inequality on $\mathbb{R}^n$. We therefore begin by recalling these two inequalities and proving their corresponding quantitative stability estimates. Recall that $2^* = 2n/(n-2\gamma)$, for any $\gamma \in (0,n/2)$.

\subsection{Fractional Sobolev inequality}

The first ingredient is the sharp fractional Sobolev inequality on $\mathbb{R}^n$. In \cite{ChenFrankWeth}, Chen, Frank and Weth proved a quantitative stability estimate of this inequality and, by means of stereographic projection, transferred it to an equivalent stability result on the round sphere. Since the model case of conformal infinity of the fractional Yamabe problem is precisely the round sphere, these results provide the starting point for our analysis. We first recall the sharp fractional Sobolev inequality.

\begin{theorem}
    Let $\gamma\in (0,n/2)$. Then for all $w \in \dot{H}^{\gamma} (\mathbb{R}^n )$ we have
    \begin{equation*}
        \|w \|_{L^{2^{*}}(\mathbb{R}^n )}^2 \leq S(n,\gamma) \|(-\Delta )^{\gamma/2}w \|_{L^{2} (\mathbb{R}^n )}^2 = S(n,\gamma) \int_{\mathbb{R}^n } w (-\Delta )^{\gamma}w  dx,
    \end{equation*}
    where
    \begin{align*}
        S(n,\gamma) = \frac{\Gamma ((n-2\gamma)/2)}{\Gamma ((n+2\gamma)/2)} |\operatorname{vol}(\mathbb{S}^n )|^{-\frac{2\gamma}{n}}.
    \end{align*}
    We have equality if and only if
    \begin{align}\label{eq:fractionalbubbles}
        w(x) = c\left( \frac{\mu}{|x-x_0 |^2 + \mu^2 }  \right)^{\frac{n-2\gamma}{2}} \quad x\in \mathbb{R}^n,
    \end{align}
    for $c\in \mathbb{R}$, $\mu > 0$ and $x_0 \in \mathbb{R}^n$. We denote the set of these functions as $\mathcal{M}_{\mathbb{R}^n} $.
\end{theorem}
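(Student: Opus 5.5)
This is the sharp fractional Sobolev (Hardy--Littlewood--Sobolev) inequality of Lieb, and I would prove it through its dual form rather than attacking the $\dot H^\gamma$ quotient directly.

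\textbf{Step 1: reduce to HLS.} Write $w=c_{n,\gamma}\,|x|^{-(n-2\gamma)}*f$ with $f=(-\Delta)^{\gamma}w$. Then
\[
\int_{\mathbb{R}^n} w(-\Delta)^\gamma w\,dx=c_{n,\gamma}\iint \frac{f(x)f(y)}{|x-y|^{n-2\gamma}}\,dx\,dy .
\]
By $L^{2^*}$--$L^{(2^*)'}$ duality, the Sobolev inequality with constant $S(n,\gamma)$ is equivalent to the sharp HLS inequality
\[
\Bigl|\iint \frac{f(x)g(y)}{|x-y|^{\lambda}}\,dx\,dy\Bigr|\le C\,\|f\|_{p}\|g\|_{p},
\]
with $\lambda=n-2\gamma$ and $p=(2^*)'=2n/(n+2\gamma)$. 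The extremals correspond under $w\leftrightarrow w^{2^*-1}$, since the Euler--Lagrange equation is $(-\Delta)^\gamma w=\kappa\,w^{2^*-1}$.

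\textbf{Step 2: existence of an extremal.} Use rearrangement: by Riesz's rearrangement inequality we may assume $f$ is symmetric decreasing. Then obtain a maximizer for the diagonal HLS problem, either by Lieb's compactness argument using symmetric decreasing functions normalized by dilation, or by concentration--compactness modulo translations and dilations.

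\textbf{Step 3: identification of extremals.} This is the step I expect to be the main obstacle. I would use the conformal invariance of the HLS functional: lift to $\mathbb{S}^n$ by stereographic projection, so that $|x-y|$ becomes chordal distance times Jacobian factors and the functional is invariant under the conformal group. Then show that any optimizer $f$, after a conformal motion, is symmetric decreasing about every point, hence constant on the sphere. The cleanest route is the Frank--Lieb argument, or competing symmetries (Carlen--Loss): alternate between symmetric decreasing rearrangement on $\mathbb{R}^n$ and a rotation of the sphere. The resulting sequence converges in $L^p$ to the constant on $\mathbb{S}^n$, i.e.\ to $(1+|x|^2)^{-(n+2\gamma)/2}$ on $\mathbb{R}^n$, and does not decrease the functional. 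Equality cases follow from the strict equality cases of Riesz's inequality: the $f$ are exactly translates and dilates of $(1+|x|^2)^{-(n+2\gamma)/2}$. Applying the Riesz potential then gives the bubbles \eqref{eq:fractionalbubbles}.

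\textbf{Step 4: the constant.} Evaluate on $w=(1+|x|^2)^{-(n-2\gamma)/2}$. On $\mathbb{S}^n$ this corresponds to $w\equiv1$, and the intertwining operator $P_\gamma$ acts on constants by $\Gamma(\tfrac n2+\gamma)/\Gamma(\tfrac n2-\gamma)$. The conformally invariant quotient is then
\[
\frac{\Gamma(\tfrac{n+2\gamma}2)}{\Gamma(\tfrac{n-2\gamma}2)}\,|\mathbb{S}^n|^{1-\frac{n-2\gamma}{n}}=\frac{\Gamma(\tfrac{n+2\gamma}2)}{\Gamma(\tfrac{n-2\gamma}2)}\,|\mathbb{S}^n|^{\frac{2\gamma}{n}},
\]
whose reciprocal is $S(n,\gamma)$.
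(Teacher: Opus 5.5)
\textbf{Context.} The paper does not prove this theorem. It recalls it as the classical sharp inequality (Lieb's sharp Hardy--Littlewood--Sobolev inequality in dual form). The only consequence it uses later is $\Lambda_\gamma(\mathbb{S}^n,[h_0])=S(n,\gamma)^{-1}$, which is exactly your Step 4 combined with (\ref{eq:normsphere}). Measured against the standard proof, your Steps 1, 2 and 4 are sound, with two small repairs.
\begin{itemize}
\item In Step 1, run the argument through the duality pairing with $g\in L^{p}$, $p=2n/(n+2\gamma)$, since $(-\Delta)^\gamma w$ is in general only in $\dot H^{-\gamma}$, not in $L^p$.
\item The correspondence of extremals is $w\mapsto |w|^{2^*-2}w$, not $w\mapsto w^{2^*-1}$. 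Positivity of the kernel then forces an HLS optimizer to have constant sign, which is what excludes sign-changing Sobolev extremals. This matters for $\gamma>1$, where you cannot replace $w$ by $|w|$ in the $\dot H^\gamma$ quotient.
\end{itemize}

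\textbf{The gap: classification of extremals in Step 3.} Competing symmetries shows that the iterates converge in $L^p$ to $\|f\|_p h$, with $h=(1+|x|^2)^{-(n+2\gamma)/2}$, and that the functional is nondecreasing along the iteration. This proves that $h$ attains the supremum, hence the sharp constant. It says nothing about which $f$ attain it. The strict equality case of Riesz's inequality, applied to an optimizer $f\ge 0$, only tells you that $f$ is a translate of $f^*$, not that $f$ is a bubble. So ``equality cases follow from strict Riesz'' does not follow as written. Your phrase about being ``symmetric decreasing about every point'' is the right idea, but it has to be turned into an argument.

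\textbf{One way to close it.} For a M\"obius map $\tau$ set $f_\tau:=|J_\tau|^{1/p}f\circ\tau$. This preserves both the HLS form and $\|f\|_p$, and satisfies $(f_\tau)_\sigma=f_{\tau\sigma}$.
\begin{itemize}
\item Apply strict Riesz to $f$ and to $f_\tau$. Then the closed stabilizer $H$ of $f$ contains the Euclidean rotations $G_a$ about some point $a$, and also $\tau G_{a'}\tau^{-1}$ for some $a'$.
\item Viewed as isometries of $\mathbb{H}^{n+1}$, these two groups are the pointwise stabilizers of the geodesics $\ell_a$ (from $a$ to $\infty$) and $\tau\ell_{a'}$. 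The geodesics are distinct as soon as $\tau(\infty)\notin\{a,\infty\}$, for example if $\tau$ is an inversion centered at some $b\neq a$.
\item Since $f\neq 0$, $H$ is compact: along any unbounded sequence $\rho_k$ one has $f_{\rho_k}\rightharpoonup 0$. By Cartan's fixed point theorem $H$ fixes a point $z\in\mathbb{H}^{n+1}$, and $z$ must lie on both geodesics.
\item Two copies of $O(n)$ fixing distinct lines through $z$ generate a group acting transitively on $\mathbb{S}^n$. So after a conformal change taking $z$ to the center, $f$ lifts to a rotation-invariant, hence constant, function on $\mathbb{S}^n$. That is, $f$ is a translate and dilate of $c\,h$, and the Riesz potential of it is a bubble.
\end{itemize}
Alternatively, you may cite Lieb's original uniqueness argument, Li's method of moving spheres, or Frank--Lieb's inversion positivity.
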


The explicit characterization of the extremal functions allows one to ask whether almost extremals must be quantitatively close to the family of bubbles. This was answered by Chen, Frank and Weth in \cite{ChenFrankWeth}, who proved the following quantitative stability estimate.

\begin{theorem}[Chen, Frank and Weth \cite{ChenFrankWeth}]\label{Thm:FractionalStab}
    There exists a positive constant $C = C (n,\gamma) > 0$ such that
    \begin{equation*}
         \int_{\mathbb{R}^n } w (-\Delta )^{\gamma}w  dx - \frac{1}{S(n,\gamma) }\left( \int_{\mathbb{R}^n } |w|^{2^{*}} dx  \right)^{\frac{2}{2^{*}}} \geq C \text{dist}(w,\mathcal{M}_{\mathbb{R}^n} )^2 \quad \forall w\in \dot{H}^{\gamma } (\mathbb{R}^n),
    \end{equation*}
    where
    \begin{align*}
        \text{dist}(w,\mathcal{M}_{\mathbb{R}^n} ) := \inf \{\|w-u \|_{\dot{H}^{\gamma}(\mathbb{R}^n)} : u \in \mathcal{M}_{\mathbb{R}^n}  \},
    \end{align*}
    and $\mathcal{M}_{\mathbb{R}^n}$ is the set of extremal functions for the fractional Sobolev inequality.
\end{theorem}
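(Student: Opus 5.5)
The statement immediately above is Theorem \ref{Thm:FractionalStab}, the result of Chen, Frank and Weth. I would follow the Bianchi--Egnell strategy adapted to the nonlocal operator $(-\Delta)^\gamma$, splitting the argument into a local spectral estimate near $\mathcal{M}_{\mathbb{R}^n}$ and a global compactness argument.

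\emph{Step 1: local estimate.} Take $w$ with $\operatorname{dist}(w,\mathcal{M}_{\mathbb{R}^n})$ small compared to $\|w\|_{\dot H^\gamma}$. The infimum defining the distance is attained at some $cU_{\mu,x_0}$, using the scaling and translation invariance and the continuity of the distance in the parameters. Write $w=c(U+\phi)$, normalized so that $U=U_{1,0}$. Orthogonality in $\dot H^\gamma$ then gives $\phi\perp U,\partial_\mu U,\partial_{x_j}U$.

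Next I transfer the problem to $\mathbb{S}^n$ by stereographic projection. There $(-\Delta)^\gamma$ becomes the intertwining operator $P_\gamma^{h_0}$, which acts diagonally on spherical harmonics of degree $k$ with eigenvalues
\[
\lambda_k=\frac{\Gamma(k+\tfrac n2+\gamma)}{\Gamma(k+\tfrac n2-\gamma)},
\]
and these are strictly increasing in $k$. The linearized Euler--Lagrange equation has the form $(-\Delta)^\gamma\phi=\frac{n+2\gamma}{n-2\gamma}\,S^{-1}U^{2^*-2}\phi$, which on the sphere is an eigenvalue problem in $L^2(d\sigma)$. Its kernel is spanned by the constants ($k=0$) and the first spherical harmonics ($k=1$). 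These correspond to $U$ and to the tangent vectors $\partial_\mu U,\partial_{x_j}U$, respectively.

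For $\phi$ orthogonal to this kernel, the spectral gap yields
\[
\int \phi(-\Delta)^\gamma\phi-\frac{n+2\gamma}{n-2\gamma}S^{-1}\int U^{2^*-2}\phi^2\ \ge\ \kappa\,\|\phi\|^2_{\dot H^\gamma},
\qquad
\kappa=1-\frac{\lambda_1}{\lambda_2}>0 .
\]

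\emph{Step 2: Taylor expansion.} Expand the deficit to second order, using $\int|U+\phi|^{2^*}=\int U^{2^*}+2^*\int U^{2^*-1}\phi+\frac{2^*(2^*-1)}2\int U^{2^*-2}\phi^2+o(\|\phi\|^2)$. The linear term vanishes by orthogonality and the Euler--Lagrange equation. This gives a deficit of at least $(\kappa-o(1))\|\phi\|^2$, which is the local version of the inequality.

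\emph{Step 3: global argument.} Argue by contradiction. Suppose there is a sequence $w_k$ with $\|w_k\|_{\dot H^\gamma}=1$, deficit $\delta_k$, and $\delta_k/\operatorname{dist}^2\to0$. Since $\operatorname{dist}\le1$, this forces $\delta_k\to0$, so $(w_k)$ is a minimizing sequence for the sharp Sobolev quotient. Lions' concentration-compactness, or alternatively the profile decomposition for $\dot H^\gamma\hookrightarrow L^{2^*}$, shows that after rescaling and translation $w_k$ converges strongly to an extremal. Hence $\operatorname{dist}(w_k,\mathcal{M}_{\mathbb{R}^n})\to0$, contradicting Step 2.

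I expect the main obstacle to be the spectral gap in Step 1, namely identifying the exact nondegeneracy of the bubble for the nonlocal linearized operator. The plan is to handle it through the explicit Funk--Hecke diagonalization of $P_\gamma^{h_0}$ on the sphere, which avoids needing any ODE analysis of the linearized operator.
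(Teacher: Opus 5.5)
Your proposal is correct. The paper gives no proof of this theorem and only cites Chen--Frank--Weth, whose argument is the Bianchi--Egnell scheme you outline: a local spectral gap from the eigenvalues $\lambda_k$ of $P_\gamma^{h_0}$ after stereographic projection, a second-order expansion, and then concentration--compactness for the global step.

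One correction: the kernel of the linearized operator at $U$ consists only of the degree-one harmonics, since $\lambda_1/\lambda_0=\frac{n+2\gamma}{n-2\gamma}=2^*-1$. The constants, which correspond to $U$, form a negative direction rather than part of the kernel, and this is exactly why the orthogonality $\phi\perp U$ is needed. Your coercivity bound with $\kappa=1-\lambda_1/\lambda_2$ is nevertheless right as stated.
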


The Euclidean stability estimate can be transferred to the round sphere through stereographic projection. This is carried out in \cite{ChenFrankWeth}, yielding an equivalent stability inequality for functions on $\mathbb{S}^n$. The same strategy is used in the classical setting to pass from the Bianchi--Egnell inequality to the sphere; see, for instance, \cite{Schoenlectures,Neumayer}. Since this argument is standard, we simply state the resulting theorem.

\begin{theorem}[Chen, Frank and Weth \cite{ChenFrankWeth}]\label{thm:ChenFrankWethSphere}
    There exists a constant $C=C (n,\gamma) >0$ such that
    \begin{equation*}
        \frac{\| w \|_{H^\gamma (\mathbb{S}^n)}^2 }{\|w\|_{L^{2^*} (\mathbb{S}^n)}^2} - \frac{1}{S(n,\gamma)} \geq C \frac{\text{dist}(w,\mathcal{M}_{\mathbb{S}^n} )^2}{\|w\|_{L^{2^*} (\mathbb{S}^n)}^2}
         \quad \forall w\in H^{\gamma } (\mathbb{S}^n),
    \end{equation*}
    where $\mathcal{M}_{\mathbb{S}^n } = \{v \in H^\gamma (\mathbb{S}^n) : \mathcal{P}v \in \mathcal{M}_{\mathbb{R}^n} \} $ and $\mathcal{P}$ is the isometric isomorphism given by the stereographic projection.
\end{theorem}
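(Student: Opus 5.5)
We obtain the sphere estimate from Theorem \ref{Thm:FractionalStab} by transporting it through stereographic projection. Let $\mathcal{S}:\mathbb{S}^n\setminus\{N\}\to\mathbb{R}^n$ be the stereographic projection. Set $J(x)=\bigl(\frac{2}{1+|x|^2}\bigr)$, so that $(\mathcal{S}^{-1})^*h_0=J^2|dx|^2$. Define
\[
\mathcal{P}w(x)=J(x)^{\frac{n-2\gamma}{2}}\,w(\mathcal{S}^{-1}x).
\]

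\textbf{Step 1: $\mathcal{P}$ preserves both norms.} We need two identities.
\begin{align*}
\int_{\mathbb{R}^n}|\mathcal{P}w|^{2^*}dx &= \int_{\mathbb{S}^n}|w|^{2^*}d\sigma_{h_0},\\
\int_{\mathbb{R}^n}\mathcal{P}w\,(-\Delta)^\gamma\mathcal{P}w\,dx &= \int_{\mathbb{S}^n}w\,P_\gamma^{h_0}w\,d\sigma_{h_0}.
\end{align*}
The first holds because $J^{n}dx=d\sigma_{h_0}$ and $\frac{n-2\gamma}{2}\cdot 2^*=n$.

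The second follows from the covariance law \eqref{eq:GJMSconformal}, applied with the flat metric and the conformal factor $J^{(n-2\gamma)/2}$. For the hyperbolic filling $g^+$ of the round sphere (the Poincaré ball, equivalently the upper half-space), $P_\gamma$ reduces to $(-\Delta)^\gamma$ on $\mathbb{R}^n$. This gives
\[
(-\Delta)^\gamma(\mathcal{P}w)=J^{\frac{n+2\gamma}{2}}\,(P_\gamma^{h_0}w)\circ\mathcal{S}^{-1}.
\]
Integrating against $\mathcal{P}w$ and changing variables yields the quadratic-form identity.

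Since $\mathbb{S}^n\setminus\{N\}$ has full measure and $C^\infty(\mathbb{S}^n)$ is dense in $H^\gamma(\mathbb{S}^n)$, $\mathcal{P}$ extends to an isometric isomorphism $H^\gamma(\mathbb{S}^n)\to\dot H^\gamma(\mathbb{R}^n)$. Here $H^\gamma(\mathbb{S}^n)$ carries the norm $\|w\|^2_{H^\gamma(\mathbb{S}^n)}=\int w P^{h_0}_\gamma w$. This quadratic form is positive and equivalent to the standard $H^\gamma$ norm, since $P^{h_0}_\gamma$ is an explicit ratio of Gamma functions of $\sqrt{-\Delta+(\frac{n-1}{2})^2}$ with positive spectrum. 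Using this norm is consistent with the statement.

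\textbf{Step 2: apply the Euclidean estimate.} Take $w\in H^\gamma(\mathbb{S}^n)$ and apply Theorem \ref{Thm:FractionalStab} to $\mathcal{P}w$. Because $\mathcal{P}$ is an isometry, we have
\[
\operatorname{dist}(\mathcal{P}w,\mathcal{M}_{\mathbb{R}^n})=\operatorname{dist}(w,\mathcal{M}_{\mathbb{S}^n}),
\]
where $\mathcal{M}_{\mathbb{S}^n}=\mathcal{P}^{-1}\mathcal{M}_{\mathbb{R}^n}$ by definition. The theorem then gives
\[
\|w\|_{H^\gamma(\mathbb{S}^n)}^2-\frac{1}{S(n,\gamma)}\|w\|^2_{L^{2^*}(\mathbb{S}^n)}\ge C\,\operatorname{dist}(w,\mathcal{M}_{\mathbb{S}^n})^2 .
\]
Dividing by $\|w\|^2_{L^{2^*}}$, for $w\neq0$, gives exactly the claimed inequality.

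\textbf{Main obstacle.} The only delicate point is Step 1's second identity in the nonsmooth setting. It requires the conformal covariance of the scattering operator under the Möbius identification of the hyperbolic ball with the half-space. It also requires checking that the extension by density is legitimate, i.e.\ that the quadratic forms agree for all of $H^\gamma$ and not only for $C^\infty_c(\mathbb{S}^n\setminus\{N\})$. For $\gamma<n/2$ a point has zero $H^\gamma$-capacity, so $C_c^\infty(\mathbb{S}^n\setminus\{N\})$ is dense and the identity extends by continuity.
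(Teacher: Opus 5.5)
Your proposal is correct and follows the same route the paper indicates. The paper transfers the Chen--Frank--Weth Euclidean estimate through the stereographic isometry $\mathcal{P}$, using that $P_\gamma^{h_0}$ is the pullback of $(-\Delta)^\gamma$ as recorded in \eqref{eq:normsphere}, and treats this as standard with a citation rather than a written proof; you supply the omitted details (the $L^{2^*}$ and quadratic-form identities via conformal covariance, and the density extension).
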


The previous theorem is formulated in terms of the Sobolev norm introduced through stereographic projection. To use it in the fractional Yamabe setting, one only needs to identify this norm with the energy appearing in the Yamabe functional on the round sphere. Indeed, we use the fact that the normalized scattering operator on the sphere $P_{\gamma}^{h_0 }$ is the pull back of $(-\Delta_{\mathbb{R}^n})^\gamma$ via the stereographic projection, see \cite{ChenFrankWeth,NirembergProb}, we have that
    \begin{align}\label{eq:normsphere}
        \|w\|_{H^{\gamma} (\mathbb{S}^n )}^2 = \int_{\mathbb{R}^n } \mathcal{P}w (-\Delta_{\mathbb{R}^n})^\gamma (\mathcal{P}w) dx = \int_{\mathbb{S}^n } w P_{\gamma}^{h_0 } (w) d\sigma_{h_0} .
    \end{align}

This immediately yields the following quantitative stability estimate for the model case.
\begin{proposition}
    Let $(\mathbb{S}^n , h_0 )$ be the round sphere with $n\geq 3$. Then, there exists a constant $C>0$ such that for all $w \in H^{\gamma}(\mathbb{S}^n)$
    \begin{align*}
        I_\gamma [w,h_0] - \Lambda_\gamma (\mathbb{S}^n, [h_0] ) \geq C d(w,\mathcal{M}_{\mathbb{S}^n })^2,
    \end{align*}
    where $\mathcal{M}_{\mathbb{S}^n}$ is the set of minimizers of $I_\gamma [-, h_0]$, and
    \begin{align*}
        d(w,\mathcal{M}_{\mathbb{S}^n}) := \frac{\inf \{\|w - v \|_{H^{\gamma} (\mathbb{S}^n) } : \mathcal{P}v \in \mathcal{M}_{\mathbb{R}^n} \}}{\|w\|_{ H^{\gamma} (\mathbb{S}^n)  }}.
    \end{align*}
\end{proposition}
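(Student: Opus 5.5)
We derive the estimate directly from Theorem \ref{thm:ChenFrankWethSphere}, using the identification \eqref{eq:normsphere}. The first step is to put the fractional Yamabe functional on the round sphere in Sobolev form. By \eqref{eq:normsphere}, the numerator of $I_\gamma[w,h_0]$ is
\[
\int_{\mathbb S^n} wP_\gamma^{h_0}(w)\,d\sigma_{h_0}=\|w\|_{H^\gamma(\mathbb S^n)}^2 ,
\]
and the denominator is $\|w\|_{L^{2^*}(\mathbb S^n)}^2$, because $2\cdot\frac{n-2\gamma}{2n}=\frac{2}{2^*}$. Hence
\[
I_\gamma[w,h_0]=\|w\|_{H^\gamma}^2/\|w\|_{L^{2^*}}^2 .
\]
Since $\mathcal P$ is an isometric isomorphism and $d\sigma_{h_0}$ pulls back with the conformal factor that makes $\|\mathcal P w\|_{L^{2^*}(\mathbb R^n)}=\|w\|_{L^{2^*}(\mathbb S^n)}$, the sharp fractional Sobolev inequality transfers to give $\Lambda_\gamma(\mathbb S^n,[h_0])=1/S(n,\gamma)$. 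Its minimizers are exactly the preimages of the bubbles \eqref{eq:fractionalbubbles}, so the set of minimizers of $I_\gamma[-,h_0]$ coincides with $\mathcal M_{\mathbb S^n}$.

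Next we apply Theorem \ref{thm:ChenFrankWethSphere}, which gives
\[
I_\gamma[w,h_0]-\Lambda_\gamma(\mathbb S^n,[h_0])\ge C\,\frac{\mathrm{dist}(w,\mathcal M_{\mathbb S^n})^2}{\|w\|_{L^{2^*}}^2}.
\]
It then remains to replace the $L^{2^*}$ norm in the denominator by the $H^\gamma$ norm. By the sharp Sobolev inequality on the sphere, $\|w\|_{L^{2^*}}^2\le S(n,\gamma)\|w\|_{H^\gamma}^2$. Therefore
\[
\frac{\mathrm{dist}^2}{\|w\|_{L^{2^*}}^2}\ \ge\ \frac{1}{S(n,\gamma)}\,\frac{\mathrm{dist}^2}{\|w\|_{H^\gamma}^2}=\frac{1}{S(n,\gamma)}\,d(w,\mathcal M_{\mathbb S^n})^2 ,
\]
and the proposition follows with the constant $C/S(n,\gamma)$. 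The case $w=0$ is excluded, since the functional is undefined there, and the inequality is trivial whenever the deficit is infinite.

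The only delicate point is the bookkeeping of the identifications. One must check that $\mathrm{dist}$ in Theorem \ref{thm:ChenFrankWethSphere} is measured in the same $H^\gamma(\mathbb S^n)$ norm as $d$. This holds because both are defined through \eqref{eq:normsphere}, that is, through the quadratic form of $P_\gamma^{h_0}$, which is positive on the sphere. One must also check that the infimum of $I_\gamma$ equals $1/S(n,\gamma)$ with minimizer set exactly $\mathcal M_{\mathbb S^n}$. Both follow from the conformal covariance \eqref{eq:GJMSconformal} under stereographic projection together with the equality cases of the fractional Sobolev inequality. No further analysis is needed.
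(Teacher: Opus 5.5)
Your proposal is correct and follows essentially the same route as the paper: you identify $I_\gamma[w,h_0]$ with the Sobolev quotient via \eqref{eq:normsphere} and $\Lambda_\gamma(\mathbb{S}^n,[h_0])=S(n,\gamma)^{-1}$, apply Theorem \ref{thm:ChenFrankWethSphere}, and then use the sharp fractional Sobolev inequality to replace $\|w\|_{L^{2^*}}$ by $\|w\|_{H^\gamma}$ in the denominator. Your version just makes the constant explicit as $C/S(n,\gamma)$ and spells out the identification of the minimizer set, which the paper leaves implicit.
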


\begin{proof}
    Let $w \in H^{\gamma} (\mathbb{S}^n )$, then by Theorem \ref{thm:ChenFrankWethSphere} we have the following inequality
        \begin{equation*}
            \frac{\| w \|_{H^{\gamma} (\mathbb{S}^n )}^2 }{\|w\|_{L^{2^*} (\mathbb{S}^n) }^2} - \frac{1}{S(n,\gamma)} \geq C \frac{\text{dist}(w,\mathcal{M}_{\mathbb{S}^n} )^2}{\|w\|_{L^{2^*} (\mathbb{S}^n) }^2}.
        \end{equation*}
    Noting that $\Lambda_\gamma (\mathbb{S}^n, [h_0] ) = S(n,\gamma)^{-1}$ and the expression (\ref{eq:normsphere}), we have
    \begin{equation*}
        I_\gamma [w,h_0] - \Lambda_\gamma (\mathbb{S}^n, [h_0] ) \geq C \frac{\text{dist}(w,\mathcal{M}_{\mathbb{S}^n} )^2}{\|w\|_{L^{2^*} (\mathbb{S}^n) }^2}\, .
    \end{equation*}
    To conclude we use the fractional Sobolev embedding to obtain the desired normalized distance
    \begin{align*}
        I_{\gamma}[w,h_0 ] - \Lambda_\gamma (\mathbb{S}^n, [h_0] ) &\geq C \frac{\text{dist}(w,\mathcal{M}_{\mathbb{S}^n} )^2}{\|w\|_{L^{2^*} (\mathbb{S}^n) }^2} \geq C d(w,\mathcal{M}_{\mathbb{S}^n })^2.
    \end{align*}
\end{proof}

\subsection{Weighted Sobolev trace inequality}
We now consider the weighted Sobolev trace inequality, which relates the intrinsic fractional Sobolev inequality on $\mathbb{R}^n$ to its extension formulation on $\mathbb{R}_+^{n+1}$. We first recall the sharp inequality and its extremal functions.

\begin{theorem}[Gonzalez and Qing \cite{GonzQing}]
    Let $\gamma \in (0,1)$ and $a= 1-2\gamma$. There exists a constant $\bar{S} (n,\gamma)$ such that, for every $U\in \dot{W}^{1,2}(\mathbb{R}_{+}^{n+1}, y^a )$ with trace $TU =w$, the following inequality holds
    \begin{align*}
        \|w \|_{L^{2^{*}}(\mathbb{R}^n )}^2 \leq \bar{S}(n,\gamma) \int_{\mathbb{R}_{+}^{n+1}} y^a |\nabla U|^2 dxdy \, ,
    \end{align*}
    where $\bar{S}(n,\gamma) = d_\gamma^* S(n,\gamma)$. Equality holds if and only if
    \begin{align*}
        w(x) = c \left(\frac{\mu}{\mu^2 + |x-x_0 |^2}\right)^{\frac{n-2\gamma}{2}} \quad x \in \mathbb{R}^n,
    \end{align*}
    for $c \in \mathbb{R}$, $\mu >0$ and $x_0 \in \mathbb{R}^n$, and $U$ is the Poisson extension of $w$ given by the convolution with the Poisson kernel.
\end{theorem}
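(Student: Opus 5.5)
The plan is to reduce the weighted trace inequality to the sharp fractional Sobolev inequality (the theorem stated earlier in this section) through the Caffarelli--Silvestre extension, using the Dirichlet principle for the weighted Dirichlet energy. Fix $w$ in the trace space and let $U_w = P_\gamma(\cdot,y) * w$ be its Poisson extension, where
\[
P_\gamma(x,y) = c_{n,\gamma}\,\frac{y^{2\gamma}}{(|x|^2+y^2)^{\frac{n+2\gamma}{2}}}
\]
is the Poisson kernel for the operator $\operatorname{div}(y^a\nabla\,\cdot)$. By Caffarelli--Silvestre, $U_w$ solves $\operatorname{div}(y^a\nabla U_w)=0$ and
\[
-\lim_{y\to0} y^a\partial_y U_w = \kappa_\gamma (-\Delta)^\gamma w .
\]
Integrating by parts, with boundary terms justified by density of Schwartz functions, gives the energy identity
\[
\int_{\mathbb{R}^{n+1}_+} y^a|\nabla U_w|^2\,dxdy = \kappa_\gamma\int_{\mathbb{R}^n} w(-\Delta)^\gamma w\,dx .
\]
I will check this identity directly on the Fourier side. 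Writing $\hat U(\xi,y)=\hat w(\xi)\,\phi(|\xi|y)$, where $\phi$ is the decaying solution of $\phi''+\frac{a}{t}\phi'-\phi=0$ with $\phi(0)=1$ (a Bessel $K_\gamma$ profile), the energy equals $|\xi|^{2\gamma}|\hat w|^2\int_0^\infty t^a(\phi'^2+\phi^2)\,dt$. This identifies $\kappa_\gamma$, which is the constant $d_\gamma^*$ (up to the normalization convention used in the paper).

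Next comes the minimality step. For any $U\in\dot W^{1,2}(\mathbb{R}^{n+1}_+,y^a)$ with trace $w$, set $V=U-U_w$, which has zero trace. Since $U_w$ is a weak solution with zero Neumann flux against test functions vanishing on the boundary, the cross term $\int y^a\nabla U_w\cdot\nabla V$ vanishes, and therefore
\[
\int y^a|\nabla U|^2 = \int y^a|\nabla U_w|^2 + \int y^a|\nabla V|^2 \ge \kappa_\gamma \int w(-\Delta)^\gamma w .
\]
Combining this with the fractional Sobolev inequality yields
\[
\|w\|_{L^{2^*}}^2 \le S(n,\gamma)\int w(-\Delta)^\gamma w \le \frac{S(n,\gamma)}{\kappa_\gamma}\int y^a|\nabla U|^2 ,
\]
which is the desired inequality with $\bar S = S/\kappa_\gamma$, matching $d_\gamma^* S$ under the paper's convention.

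Equality forces equality in both steps. First, $\int y^a|\nabla V|^2=0$, so $V$ is constant, hence zero because its trace vanishes; thus $U=U_w$. Second, equality in the fractional Sobolev inequality forces $w\in\mathcal M_{\mathbb{R}^n}$, i.e.\ $w$ has the bubble form \eqref{eq:fractionalbubbles}. Conversely, these pairs $(w,U_w)$ attain equality.

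The main technical obstacle is justifying the energy identity and the orthogonality rigorously in the homogeneous weighted space. This requires the density of $C_c^\infty(\overline{\mathbb{R}^{n+1}_+})$ in $\dot W^{1,2}(y^a)$, the trace theorem onto $\dot H^\gamma$, and the fact that $y^a$ is an $A_2$ weight for $\gamma\in(0,1)$. The plan is to verify all identities on a dense class and then pass to the limit using continuity of the trace.
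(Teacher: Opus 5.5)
Your argument is correct and is the standard one. The paper does not prove this theorem itself and simply cites González--Qing, but in the proof of Proposition~\ref{Propositio:Stabilitytrace} it uses exactly your route: the splitting $U=U_w+U_0$, the vanishing of the cross term, the identity $\int y^a|\nabla U_w|^2=\frac{1}{d_\gamma^*}\int w(-\Delta)^\gamma w$, and then the fractional Sobolev inequality.

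One small slip: under the paper's normalization $-d_\gamma^*\lim_{y\to 0}y^a\partial_y U_w=(-\Delta)^\gamma w$, your constant is $\kappa_\gamma=1/d_\gamma^*$, not $d_\gamma^*$. That reciprocal is exactly what makes $\bar S=S/\kappa_\gamma=d_\gamma^*S$ come out right.
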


The extremals of the weighted Sobolev trace inequality are precisely the weighted Poisson extensions of the fractional bubbles (\ref{eq:fractionalbubbles}). Explicitly,
    \begin{align*}
        \mathcal{M}_{\mathbb{R}_{+}^{n+1}} = \{K * v : v \in \mathcal{M}_{\mathbb{R}^n} \}.
    \end{align*}
Therefore, Theorem \ref{Thm:FractionalStab}, together with the extension characterization, yield the corresponding stability estimate for the weighted trace inequality.

\begin{proposition}[Stability of the weighted Sobolev trace inequality]\label{Propositio:Stabilitytrace} Let $\gamma \in (0,1)$ and $a= 1-2\gamma$. Then, there exists a constant $C>0$ such that for any $U\in \dot{W}^{1,2}(\mathbb{R}_{+}^{n+1}, y^a )$ with trace $TU =w$
\begin{align*}
    \int_{\mathbb{R}_{+}^{n+1}} y^a |\nabla U|^2 dxdy - \frac{1}{\bar{S}(n,\gamma)} \|w \|_{L^{2^{*}}(\mathbb{R}^n )}^2  \geq C \text{dist} (U,\mathcal{M}_{\mathbb{R}_+^{n+1}})^2 \, ,
\end{align*}
where
\begin{align*}
    \text{dist} (U,\mathcal{M}_{\mathbb{R}_+^{n+1}}) = \inf \{\|U - V\|_{\dot{W}^{1,2}(\mathbb{R}^{n+1}_+,y^a)} : V \in \mathcal{M}_{\mathbb{R}^{n+1}_+ } \} .
\end{align*}
\end{proposition}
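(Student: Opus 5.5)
The plan is to split the Dirichlet energy of $U$ into the energy of the Poisson extension of its trace plus an orthogonal remainder, and then use Theorem~\ref{Thm:FractionalStab} on the trace.

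\textbf{Decomposition.} Let $U\in \dot W^{1,2}(\mathbb{R}^{n+1}_+,y^a)$ with trace $w$, and let $\tilde U = K*w$ be the weighted Poisson extension. Two facts from the Caffarelli--Silvestre extension are used:
\begin{itemize}
\item $\tilde U$ minimizes $\int y^a|\nabla V|^2$ among all $V$ with trace $w$;
\item $\int_{\mathbb{R}^{n+1}_+} y^a|\nabla \tilde U|^2\,dxdy = \frac{1}{d_\gamma^*}\int_{\mathbb{R}^n} w(-\Delta)^\gamma w\,dx$.
\end{itemize}
Since $\tilde U$ is $y^a$-harmonic and $U-\tilde U$ has zero trace, the cross term in the weighted inner product vanishes. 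Hence
\[
\int y^a|\nabla U|^2 = \int y^a|\nabla \tilde U|^2 + \int y^a |\nabla(U-\tilde U)|^2 .
\]
This is a Pythagorean identity in $\dot W^{1,2}(y^a)$.

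\textbf{Estimate for the trace part.} Using $\bar S = d_\gamma^* S$ and Theorem~\ref{Thm:FractionalStab},
\[
\int y^a|\nabla \tilde U|^2 - \frac{1}{\bar S}\|w\|_{L^{2^*}}^2 = \frac{1}{d_\gamma^*}\Big(\int w(-\Delta)^\gamma w - \tfrac{1}{S}\|w\|_{L^{2^*}}^2\Big) \ge \frac{C}{d_\gamma^*}\inf_{v\in\mathcal M_{\mathbb R^n}}\|w-v\|_{\dot H^\gamma}^2 .
\]

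\textbf{Matching the two distances.} Because the Poisson extension is linear, $\|\tilde U - K*v\|_{\dot W^{1,2}(y^a)}^2 = (d_\gamma^*)^{-1}\|w-v\|_{\dot H^\gamma}^2$. The functions $K*v$ with $v\in\mathcal M_{\mathbb R^n}$ are exactly the elements of $\mathcal M_{\mathbb R^{n+1}_+}$. Applying the same Pythagorean splitting to $U-K*v$, whose trace is $w-v$ and whose extension part is $\tilde U - K*v$, gives
\[
\|U-K*v\|^2 = \|\tilde U-K*v\|^2 + \|U-\tilde U\|^2 .
\]
Taking the infimum over $v$ yields
\[
\mathrm{dist}(U,\mathcal M_{\mathbb R^{n+1}_+})^2 = (d_\gamma^*)^{-1}\inf_v\|w-v\|_{\dot H^\gamma}^2 + \|U-\tilde U\|^2 .
\]

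\textbf{Conclusion.} Combining the three steps,
\[
\int y^a|\nabla U|^2 - \frac{1}{\bar S}\|w\|^2_{L^{2^*}} \ge \frac{C}{d_\gamma^*}\inf_v\|w-v\|^2_{\dot H^\gamma} + \|U-\tilde U\|^2 \ge \min(C,1)\,\mathrm{dist}(U,\mathcal M_{\mathbb R^{n+1}_+})^2 .
\]

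\textbf{Main obstacle.} The only delicate point is justifying the orthogonality, i.e.\ that the cross term $\int y^a\nabla\tilde U\cdot\nabla(U-\tilde U)$ vanishes. For compactly supported smooth functions this follows by integrating by parts: the interior term is zero by $y^a$-harmonicity, and the boundary term is zero because $U-\tilde U$ has zero trace. The general case then follows by density in $\dot W^{1,2}(y^a)$, using continuity of the trace into $\dot H^\gamma$.
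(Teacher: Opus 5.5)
Your proposal is correct and follows the paper's proof essentially step for step. Both use the splitting $U=K*w+(U-K*w)$ into the weighted Poisson extension plus a zero-trace remainder, apply Theorem~\ref{Thm:FractionalStab} to the trace, transfer the distance through the linear Poisson extension using that $\mathcal{M}_{\mathbb{R}^{n+1}_+}=\{K*v: v\in\mathcal{M}_{\mathbb{R}^n}\}$, and conclude with the constant $\min\{C,1\}$. The only differences are minor: you record the distance relation as an exact Pythagorean identity rather than an inequality, and you sketch why the cross term vanishes, which the paper simply asserts.
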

\begin{proof}
    Let $U\in \dot{W}^{1,2}(\mathbb{R}_{+}^{n+1}, y^a )$ with trace $w \in \dot{H}^\gamma (\mathbb{R}^n)$. We can always decompose
    \begin{align*}
        U = U_w + U_0 \, ,
    \end{align*}
    where $U_w = K*w$ and $TU_0 = 0$. Note that $U_w$ is the unique energy-minimizing extension among all functions with trace $w$. Since $\text{div}(y^a \nabla U_w ) = 0$ and $TU_0 = 0$, both terms are orthogonal in the weighted Dirichlet form. Consequently,
    \begin{align*}
        \int_{\mathbb{R}_{+}^{n+1}} y^a |\nabla U|^2 dxdy =& \int_{\mathbb{R}_{+}^{n+1}} y^a |\nabla U_w|^2 dxdy + \int_{\mathbb{R}_{+}^{n+1}} y^a |\nabla U_0|^2dxdy + 2 \int_{\mathbb{R}_{+}^{n+1}} y^a \langle \nabla U_0, \nabla U_w  \rangle  dxdy \\
        =& \int_{\mathbb{R}_{+}^{n+1}} y^a |\nabla U_w|^2 dxdy + \int_{\mathbb{R}_{+}^{n+1}} y^a |\nabla U_0|^2dxdy  \\
        =& \frac{1}{d_\gamma^*} \int_{\mathbb{R}^n} w (-\Delta)^\gamma w \, dx+ \int_{\mathbb{R}_{+}^{n+1}} y^a |\nabla U_0|^2 dxdy .
    \end{align*}
    Thus, we have
    \begin{align*}
        \int_{\mathbb{R}_{+}^{n+1}} y^a |\nabla U|^2  dxdy  - \frac{\|w \|_{L^{2^{*}}(\mathbb{R}^n )}^2}{\bar{S}(n,\gamma)}  =& \frac{1}{d_\gamma^*} \int_{\mathbb{R}^n} w (-\Delta)^\gamma w \, dx + \int_{\mathbb{R}_{+}^{n+1}} y^a |\nabla U_0|^2 dxdy - \frac{\|w \|_{L^{2^{*}}(\mathbb{R}^n )}^2 }{\bar{S}(n,\gamma)}  \\
        =& \frac{1}{d_\gamma^*} \int_{\mathbb{R}^n} w (-\Delta)^\gamma w \, dx + \int_{\mathbb{R}_{+}^{n+1}} y^a |\nabla U_0|^2 dxdy -  \frac{\|w \|_{L^{2^{*}}(\mathbb{R}^n )}^2 }{d_\gamma^* S(n,\gamma) } \\
        \geq& \frac{C}{d_\gamma^*}\,  \text{dist}(w,\mathcal{M}_{\mathbb{R}^n} )^2 + \|U_0\|_{\dot{W}^{1,2}(\mathbb{R}_{+}^{n+1},y^a)}^2 \\
        =& \frac{C}{d_\gamma^*} \, \inf \{ \|w - v\|_{\dot{H}^\gamma (\mathbb{R}^n)} : v \in \mathcal{M}_{\mathbb{R}^n}   \}^2 + \|U_0\|_{\dot{W}^{1,2}(\mathbb{R}_{+}^{n+1},y^a)}^2 \\
        =& C \, \inf \{ \|U_w - V\|_{\dot{W}^{1,2}(\mathbb{R}_{+}^{n+1},y^a)} :  V \in \mathcal{M}_{\mathbb{R}_+^{n+1}}  \}^2 + \|U_0\|_{\dot{W}^{1,2}(\mathbb{R}_{+}^{n+1},y^a)}^2 \\
        \geq& \tilde{C} \, \inf \{ \|U_w + U_0 - V\|_{\dot{W}^{1,2}(\mathbb{R}_{+}^{n+1},y^a)} :  V \in \mathcal{M}_{\mathbb{R}_+^{n+1}}  \}^2 \\
        =& \tilde{C} \, \inf \{ \|U - V\|_{\dot{W}^{1,2}(\mathbb{R}_{+}^{n+1},y^a)} :  V \in \mathcal{M}_{\mathbb{R}_+^{n+1}}  \}^2 \\
        =& \tilde{C} \, \text{dist}(U,\mathcal{M}_{\mathbb{R}_+^{n+1}} )^2 \, ,
    \end{align*}
    where $\tilde{C} = \min \{C,1\}$, and we can put $U_0$ inside the homogeneous norm because the cross term
    \begin{align*}
        \int_{\mathbb{R}_{+}^{n+1}} y^a \langle \nabla (U_w - V),\nabla U_0\rangle dxdy = 0 \, ,
    \end{align*}
    vanishes since $V \in \mathcal{M}_{\mathbb{R}_+^{n+1}} $. We have proved the desired result.
\end{proof}

It is worth noting that, if we consider the upper half-space with the hyperbolic metric, the $y$ coordinate corresponds to the adapted defining function. This means that the previous result implies directly, just by adapting the language, the quadratic stability of the model case for the extension formulation of the fractional Yamabe problem.

\begin{remark}
The converse implication also holds. Indeed, restricting the weighted Sobolev trace inequality to harmonic extensions and arguing exactly as in the proof of Proposition \ref{Propositio:Stabilitytrace}, one recovers the quantitative stability of the fractional Sobolev inequality. Therefore, the two stability inequalities are equivalent through the Caffarelli-Silvestre extension.
\end{remark}

\section{Properties of the fractional Yamabe energy and Lyapunov-Schmidt reduction}\label{Section:3}
In this section, we introduce the variational framework for the fractional Yamabe functional and carry out a Lyapunov-Schmidt reduction near a fixed minimizer. We first describe the functional setting and compute the corresponding first and second variations under the normalization constraint. We then state the whole statement of the reduction, leaving the proof to the Appendix \ref{Appendix:A}.

\medskip

Throughout this section, fix $\gamma \in (0,1)$ and let $(X^{n+1},g_+)$ be a conformally compact asymptotically hyperbolic manifold with conformal boundary $(M^n,[\hat{h}])$. We fix a representative $\hat{h}\in[\hat{h}]$ and let $y $ denote the adapted defining function, so that the extension problem with respect to the metric $\bar{g}^* := y^2 g^+$ is the one given in (\ref{eq:Extensionpde}). So, from now on we work on the corresponding compactified manifold $(\overline X , \bar{g}^*)$, and set $a := 1-2\gamma$. Unless otherwise stated, all geometric quantities on $M$ are computed with respect to $\hat{h}$, while those on $\overline{X}$ are computed with respect to $\bar{g}^*$.

\medskip

To simplify notation from now on we will denote $u := TU$, for any $U \in W^{1,2}(X,y^a)$, and similarly for any function in $W^{1,2}(X,y^a)$. Also, since in the following analysis we do not change the metric we will just write $I_\gamma [-,\hat{h}] $ when referring to $ I_\gamma [-]$, and similarly with $I_\gamma^*[-]$ when referring to $I_\gamma^* [-,\bar{g}^*]$.

\subsection{Variational properties of $I_\gamma$}
To gain compactness of the problem we we work on the
normalized constraint set
\begin{align*}
    \mathcal{B} := \{u \in H^\gamma (M,\mathbb{R}_+) :  \, \|u\|_{L^{2^* }} =1\}.
\end{align*}
Near a positive minimizer, we work in a sufficiently small
$C^{2\gamma+\alpha}$-neighborhood, where positivity is preserved. Elements of this set represent the conformal factors that give metrics conformal to $\hat{h}$ with unit volume in $M$. Since the admissible variations are those preserving the normalization constraint to first order, we perform the variational analysis on the tangent space of the constraint manifold. Thus, for any $u \in \mathcal{B}$, we define
\begin{align}\label{eq:tangentsp}
    T_u \mathcal{B} = \left\{ \phi \in H^\gamma (M) : \int_M u^{\frac{n+2\gamma}{n-2\gamma}} \phi \, d\sigma_{\hat{h}} = 0 \right\},
\end{align}
and its associated $L^2$-orthogonal projection onto $T_u \mathcal{B}$
\begin{align*}
    \pi_{T_u \mathcal{B}} : H^\gamma (M) &\to T_u \mathcal{B} \subset H^\gamma (M) \\
    \phi &\mapsto \phi - \langle u^{\frac{n+2\gamma}{n-2\gamma}} , \phi \rangle u\, .
\end{align*}

We start by giving the expressions for the first and second variation of $I_\gamma$ on $\mathcal{B}$. As is usual in this context, we will denote $\nabla_{\mathcal{B}} I_\gamma [u] (-) := \nabla I_\gamma [u] ( \pi_{T_u \mathcal{B}} (-)) $, and similar for $\nabla_{\mathcal{B}}^2 I_\gamma [u] (-,-):= \nabla^2 I_\gamma [u] (\pi_{T_u \mathcal{B}}(-),\pi_{T_u \mathcal{B}}(-))$.

\begin{lemma}\label{lemma:Variationalppties}
    For every $u \in \mathcal{B}$, the first variation of $I_{\gamma}$ on $\mathcal{B}$ is given by
    \begin{align*}
        \frac{1}{2} \nabla_{\mathcal{B}} I_{\gamma} [u] (\phi) = \int_M \phi P_\gamma^{\hat{h}}(u) d\sigma_{\hat{h}},
    \end{align*}
    for every $\phi \in H^\gamma (M) $. The second variation over $\mathcal{B}$ is given by
    \begin{align*}
        \frac{1}{2} \nabla_{\mathcal{B}}^2 I_{\gamma} [u](\phi, \psi) = \int_M \phi P_\gamma^{\hat{h}}(\psi) d\sigma_{\hat{h}} - (2^* - 1)I_\gamma [u] \int_M u^{\frac{4\gamma}{n-2\gamma}}\phi \psi d\sigma_{\hat{h}}.
    \end{align*}
    for every $\phi,\psi \in H^{\gamma}(M) $. Moreover, the following properties hold
    \begin{enumerate}
        \item\label{lemma:variat_1} For $\alpha \in (0,1)$ such that $2\gamma + \alpha \notin \mathbb{N}$, the map $u \mapsto \frac{\nabla_{\mathcal{B}}^2 I_\gamma [u](\phi,-)}{\|\phi\|_{C^{2\gamma + \alpha}} }$ is locally continuous from $C^{2\gamma + \alpha}(M) \cap \mathcal{B} \to C^{ \alpha}(M)$ with a modulus of continuity uniform over $\phi\in C^{2\gamma + \alpha}(M)$.
        \item\label{lemma:variat_2} The function $u \mapsto \frac{\nabla_{\mathcal{B}}^2 I_\gamma [u](\phi,\psi)}{\|\phi\|_{H^\gamma } \|\psi\|_{H^\gamma  }}$ is a locally continuous function from $ \mathcal{B} \to \mathbb{R}$ with a modulus of continuity uniform over $\phi,\psi \in H^\gamma (M)$.
    \end{enumerate}
\end{lemma}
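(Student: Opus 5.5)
The plan is to compute the variations directly from the quotient structure and then estimate differences term by term. Write $I_\gamma[w]=N(w)\,D(w)^{-2/2^*}$ with $N(w)=\int_M wP_\gamma^{\hat h}(w)\,d\sigma_{\hat h}$ and $D(w)=\int_M |w|^{2^*}d\sigma_{\hat h}$. I will use two facts throughout: $P_\gamma^{\hat h}$ is formally self-adjoint with respect to $d\sigma_{\hat h}$, and $I_\gamma$ is $0$-homogeneous. Homogeneity means that differentiating along the normalized curve $t\mapsto (u+t\phi)/\|u+t\phi\|_{L^{2^*}}\subset\mathcal B$ gives the same result as differentiating along $u+t\phi$. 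So I can differentiate the unconstrained quotient and then compose with $\pi_{T_u\mathcal B}$; no separate Lagrange-multiplier term needs to be tracked.

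\textbf{First and second variation.} Differentiating once and using $D(u)=1$ and $N(u)=I_\gamma[u]$ for $u\in\mathcal B$ gives
\[
\tfrac12\nabla I_\gamma[u](\phi)=\int_M\phi P_\gamma^{\hat h}(u)\,d\sigma_{\hat h}-I_\gamma[u]\int_M u^{2^*-1}\phi\,d\sigma_{\hat h}.
\]
Evaluated on $\pi_{T_u\mathcal B}\phi$, the second term vanishes by \eqref{eq:tangentsp}. This yields the stated first variation, understood on tangent directions. Differentiating again produces four kinds of terms:
\begin{itemize}[label={}]
\item $2\int\phi P_\gamma^{\hat h}\psi$;
\item $-2(2^*-1)I_\gamma[u]\int u^{2^*-2}\phi\psi$;
\item cross terms containing a factor $\int u^{2^*-1}\phi$ or $\int u^{2^*-1}\psi$, which vanish on $T_u\mathcal B$;
\item a term $\int\phi P_\gamma^{\hat h}u\int u^{2^*-1}\psi$, which also vanishes on $T_u\mathcal B$.
\end{itemize}
Since $2^*-2=\frac{4\gamma}{n-2\gamma}$, this is exactly the claimed formula for $\tfrac12\nabla^2_{\mathcal B}I_\gamma$.

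\textbf{Continuity (2).} For $u,v\in\mathcal B$ I split the difference into three pieces. First, the projections: $\pi_u\phi-\pi_v\phi=\langle v^{2^*-1}-u^{2^*-1},\phi\rangle v+\langle u^{2^*-1},\phi\rangle(v-u)$. This is bounded by $o(1)\|\phi\|_{H^\gamma}$ using Hölder, the Sobolev embedding $H^\gamma\hookrightarrow L^{2^*}$, and continuity of the Nemytskii map $u\mapsto u^{2^*-1}$ from $L^{2^*}$ to $L^{2^*/(2^*-1)}$. Combined with boundedness of $\phi\mapsto\int\phi P_\gamma^{\hat h}\psi$ on $H^\gamma\times H^\gamma$, this controls the $P_\gamma$ term. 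Second, the factor $I_\gamma[u]-I_\gamma[v]$, which tends to $0$ by continuity of $N$ and $D$ on $H^\gamma$. Third, the potential term, estimated by
\[
\Big|\int(u^{2^*-2}-v^{2^*-2})\phi\psi\Big|\le\|u^{2^*-2}-v^{2^*-2}\|_{L^{2^*/(2^*-2)}}\|\phi\|_{L^{2^*}}\|\psi\|_{L^{2^*}},
\]
again using the Nemytskii map. All three bounds are uniform after dividing by $\|\phi\|_{H^\gamma}\|\psi\|_{H^\gamma}$.

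\textbf{Continuity (1).} Identify $\nabla^2_{\mathcal B}I_\gamma[u](\phi,\cdot)$ with the function
\[
\pi_u^*\bigl[P_\gamma^{\hat h}(\pi_u\phi)-(2^*-1)I_\gamma[u]\,u^{2^*-2}\pi_u\phi\bigr].
\]
The key input is that $P_\gamma^{\hat h}:C^{2\gamma+\alpha}(M)\to C^\alpha(M)$ is bounded when $2\gamma+\alpha\notin\mathbb N$. Granting this, the remaining dependence on $u$ is mild. In a small $C^{2\gamma+\alpha}$-neighborhood of a positive minimizer, $u$ is bounded below, so $u\mapsto u^{2^*-2}$ is locally Lipschitz into $C^\alpha$. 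The scalars $I_\gamma[u]$ and $\langle u^{2^*-1},\phi\rangle$ are continuous, with the latter bounded by $C\|\phi\|_{C^0}$. The projections are finite-rank perturbations of the identity and depend continuously on $u$ in $C^{2\gamma+\alpha}$. These together give the uniform modulus.

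\textbf{Main obstacle.} I expect the hardest step to be the Hölder mapping property of the nonlocal operator $P_\gamma^{\hat h}$ on a general AH manifold. I would try first to realize $P_\gamma^{\hat h}\phi=-d_\gamma^*\lim y^a\partial_yU+Q_\gamma^{\hat h}\phi$, where $U$ solves $\operatorname{div}(y^a\nabla U)=0$ with $U|_M=\phi$, following Chang--González. The step would then be to invoke Schauder-type boundary regularity for this uniformly degenerate equation to bound the weighted normal derivative in $C^\alpha$ by $\|\phi\|_{C^{2\gamma+\alpha}}$. If that route proves delicate, the fallback is the pseudodifferential description: $P_\gamma^{\hat h}$ has principal symbol that of $(-\Delta_{\hat h})^\gamma$, and order-$2\gamma$ pseudodifferential operators are bounded between Hölder–Zygmund spaces.
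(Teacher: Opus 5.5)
Your proposal is correct and follows essentially the same route as the paper: you differentiate the quotient directly, let the tangent-space constraint $\int_M u^{2^*-1}\phi\,d\sigma_{\hat h}=0$ kill the cross terms (both you and the paper read the stated formulas as holding on tangent directions), and then prove the two continuity statements by term-by-term difference estimates. For the $H^\gamma$ version these are H\"older plus Sobolev; for the $C^\alpha$ version they use positivity of $u$ and the regularity of $s\mapsto s^{2^*-2}$. If anything you are more explicit than the paper, which defers the continuity of $u\mapsto\pi_{T_u\mathcal B}$ to the classical Yamabe argument; that step is where the boundedness $P_\gamma^{\hat h}:C^{2\gamma+\alpha}(M)\to C^{\alpha}(M)$, which you correctly single out, is actually needed.
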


\begin{proof}
    Let $u \in \mathcal{B}$ and $\phi \in H^\gamma (M) $. Then
    \begin{align*}
        \nabla I_{\gamma} [u](\phi) = &2 \left(\int_M \phi P_\gamma^{\hat{h}}(u) d\sigma_{\hat{h}} \right) \left(\int_M u^{2^* }d\sigma_{\hat{h}} \right)^{-\frac{2}{2^* }} \\
        &- 2 I_\gamma [u]  \left(\int_M u^{2^* } d\sigma_{\hat{h}} \right)^{-1} \int_M u^{\frac{n+2\gamma}{n-2\gamma}}\phi d\sigma_{\hat{h}} \,.
    \end{align*}
    Using the fact that $u \in \mathcal{B}$ and imposing that $\phi \in T_u \mathcal{B}$, recall the definition (\ref{eq:tangentsp}), we obtain
    \begin{align*}
        \frac{1}{2} \nabla_{\mathcal{B}} I_{\gamma} [u](\phi) = \int_M \phi P_\gamma^{\hat{h}}(u) d\sigma_{\hat{h}} .
    \end{align*}
    For the second variation, for $\phi,\psi \in H^\gamma (M)$ and, again using (\ref{eq:tangentsp}), we obtain
    \begin{align*}
        \frac{1}{2}\nabla_{\mathcal{B}}^2 I_\gamma [u] [\phi,\psi] = \int_M \phi P_\gamma^{\hat{h}} (\psi) d\sigma_{\hat{h}} - (2^* -1 )I_\gamma [u] \int_M u^{\frac{4\gamma}{n-2\gamma}} \phi \psi d\sigma_{\hat{h}}.
    \end{align*}
    
    Let $u\in \mathcal{B}$, then operator associated to the bilinear form $\nabla_{\mathcal{B}}^2 I_\gamma [u] $ is given by
    \begin{align*}
        \mathcal{L}_u \phi := P_\gamma^{\hat{h}} (\phi) - (2^* -1) I_\gamma [u] u^{2^* -2} \phi
    \end{align*}
    for $\phi \in H^\gamma (M)$. Next, to prove the continuity of the second variation we proceed just as \cite{Neumayer}. Indeed, after some basic algebra we get for any $\phi\in H^\gamma (M)$
    \begin{align*}
        \| \mathcal{L}_u \phi - \mathcal{L}_v \phi \|_X \leq C \left(|I_\gamma [u] - I_\gamma [v] | \, \|u^{2^* - 2}\phi\|_X  + |I_\gamma [v] | \, \|\phi |u^{2^* - 2} - v^{2^* - 2} | \|_X \right) \, ,
    \end{align*}
    where $X = C^{ \alpha} (M) \text{ or } X =H^{-\gamma} (M)$. If $X = C^{ \alpha} (M)$ using that $s\mapsto s^{2^* - 2}$ is continuous for the second term and the continuity of $I_\gamma [-]$ in $C^{2\gamma + \alpha} (M)$ for the first term, we get
    \begin{align*}
        \|\mathcal{L}_u \phi - \mathcal{L}_{v} \phi \|_{C^{\alpha}(M) } \leq \omega (\|u - v\|_{C^{2\gamma + \alpha }(M) } ) \|\phi\|_{C^{ 2\gamma + \alpha}(M)} \, ,
    \end{align*}
    for $\omega$ some modulus of continuity. If $X =H^{-\gamma}(M)$, thanks to Hölder and the Sobolev embedding $H^\gamma (M) \hookrightarrow L^{2^*} (M)$, we can estimate
    \begin{align*}
        \|u^{2^* - 2}\phi\|_{H^{-\gamma} (M) } \leq C \|u^{2^* -2 } \|_{L^{n/(2\gamma) } (M)} \|\phi\|_{L^{2^*} (M) } = C \|u\|_{L^{2^*} (M)}^{\frac{4\gamma}{n-2\gamma}} \|\phi\|_{L^{2^*} (M) }\leq C  \|\phi\|_{H^\gamma (M)} \, .
    \end{align*}
    This estimate together with the continuity of $I_\gamma [-]$ in $H^\gamma (M)$ yields 
    \begin{align*}
        |I_\gamma [u] - I_\gamma [v]| \, \|u^{2^* - 2} \phi \|_{H^{-\gamma} (M)} \leq \omega (\|u - v\|_{H^\gamma (M)  })  \|\phi\|_{H^\gamma (M)} \, .
    \end{align*}
    For the second term we apply similar arguments to get
    \begin{align*}
        \|\phi |u^{2^* - 2} - v^{2^* - 2}| \|_{H^{-\gamma} (M) } \leq \omega (\|u -v\|_{H^\gamma (M) } ) \|\phi\|_{H^\gamma (M)}\, ,
    \end{align*}
    for some modulus of continuity $\omega$. To complete the proof we need to show that the map $u \mapsto \pi_{T_u \mathcal{B}}$ is continuous from $C^{2\gamma + \alpha} \cap \mathcal{B} \to L(C^{2\gamma + \alpha}, C^{2\gamma + \alpha})$. This is completely analogous to the work done in \cite{Neumayer}.
\end{proof}
We now introduce some notation. Given a function $u \in \mathcal{B}$, denote the normalized ball in $H^\gamma (M)$ centered at $u$ of radius $\delta>0$ by
\begin{align*}
    \mathcal{B}(u,\delta) := \{w\in \mathcal{B} :  \|w-u\|_{H^\gamma (M)} \leq \delta  \}.
\end{align*}

\subsection{Local analysis near a minimizer}

We now establish the local stability estimate around a fixed minimizer by means of a Lyapunov--Schmidt reduction. Recall that
\begin{align*}
    \mathcal{M}_1
    :=
    \{u\in\mathcal B:
    I_\gamma[u,\hat h]
    =
    \Lambda_\gamma(M,[\hat h])\},
\end{align*}
and fix $u_*\in\mathcal M_1$. Throughout this subsection we study the behavior of the fractional Yamabe functional $I_\gamma$ in a neighborhood of $u_*$. The goal is to separate the nondegenerate and degenerate directions of the second variation, reducing the infinite-dimensional variational problem to a finite-dimensional one. This construction is analogous to the Lyapunov--Schmidt reductions used in \cite{Neumayer,CajuVDB} for the classical and boundary Yamabe problems. Since the arguments are essentially the same, we only discuss the points where the fractional setting requires additional analysis.

\medskip

The main difference with respect to the classical Yamabe problem is that the linearized operator is no longer a uniformly elliptic differential operator acting on the closed manifold $M$. Instead, it is given by the fractional conformal Laplacian $P_\gamma^{\hat h}$, a nonlocal pseudodifferential operator of order $2\gamma$. To recover the local elliptic framework needed for the Lyapunov--Schmidt reduction, we use the extension characterization of $P_\gamma^{\hat h}$ established in \cite{ChangGonzalez}. This realizes the nonlocal equation as a uniformly degenerate elliptic boundary value problem on the extension manifold $X$. Since $a\in(-1,1)$, the weight $y^a$ belongs to the Muckenhoupt class $\mathcal A_2$, and the corresponding weighted elliptic theory provides the Fredholm and Schauder estimates required below.

\medskip

The local behavior of the energy near $u_*$ is encoded by the Hessian $\nabla_{\mathcal B}^2I_\gamma[u_*]$, whose associated linearized operator is
\begin{align*}
    \mathcal L_{u_*}
    :=
    P_\gamma^{\hat h}
    -
    (2^*-1)\Lambda_\gamma(M,[\hat h])
    u_*^{\frac{4\gamma}{n-2\gamma}}.
\end{align*}
Since $u_*$ is a minimizer, the associated quadratic form is nonnegative. Moreover, $\mathcal L_{u_*}$ is a self-adjoint Fredholm operator and therefore has finite-dimensional kernel $K:=\ker\bigl(\nabla_{\mathcal B}^2I_\gamma[u_*]\bigr)$. We write $l:=\dim K$, denote by $\pi_K$ the orthogonal projection onto $K$, and let $K^\perp$ be its orthogonal complement in $T_{u_*}\mathcal B$ with respect to the $L^2(M)$ inner product. We are now ready to perform the Lyapunov--Schmidt reduction. The following result is the analogue of \cite[Lemma~2.2]{Neumayer} in the fractional setting. The proof follows the same strategy, the main additional ingredient being the PDE theory discussed above.

\begin{lemma}\label{Lemma:LS}
Let $(M^n,[\hat h])$ be the conformal infinity of an asymptotically
hyperbolic manifold $(X^{n+1},g^+)$, and fix
$u_*\in\mathcal M_1$. Then there exist an open neighborhood $\mathcal U\subset K$ of $0$
and a smooth map
\begin{align*}
    F:\mathcal U\longrightarrow K^\perp
\end{align*}
such that $F(0)=0$ and $\nabla F(0)=0$, and the following properties hold.
\begin{enumerate}
    \item\label{LS1}
    Let $q:\mathcal{U} \to \mathbb{R}$ be the function defined by $q(\phi) := I_\gamma [u_* + \phi + F(\phi)]$. Then, we have
    \begin{align*}
        \mathcal S
        :=
        \left\{ u_* + \phi + F(\phi): \phi \in \mathcal{U}
        \right\} \subset\mathcal{B},
    \end{align*}
    and
    \begin{align*}
    \nabla_{\mathcal B} I_\gamma
        [u_* + \phi + F(\phi)]
        &=
        \pi_K \nabla_{\mathcal B} I_\gamma
        [u_* + \phi + F(\phi)]\\
        &=
        \nabla q(\phi).
    \end{align*}
    Moreover, $q$ is real analytic.
    \item\label{LS2}
    There exists $\delta>0$, depending on $u_*$, such that for every
    $u\in  \mathcal{B} (u_*,\delta)$, we have $\pi_K (u-u_*) \in \mathcal{U}$. Moreover, if $u\in \mathcal{B} (u_*,\delta)$ is a critical point of
    $I_\gamma$, then
    \begin{align*}
        u = u_* + \pi_K (u-u_*) + F(\pi_K (u-u_*)).
    \end{align*}
    \item\label{LS3}
    Fix $\alpha\in(0,1)$ such that $2\gamma+\alpha\notin\mathbb N$, then there exists $C>0$ such that, for every
    $\phi\in\mathcal U$ and every $\psi\in K$,
    \begin{align*}
        \|\nabla F(\phi)[\psi]\|_{C^{2\gamma+\alpha}(M)}
        \leq
        C\|\psi\|_{C^\alpha(M)}.
    \end{align*}
\end{enumerate}
\end{lemma}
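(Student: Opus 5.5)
We follow the scheme of \cite[Lemma~2.2]{Neumayer}: an implicit function argument applied to the $K^\perp$-component of the constrained gradient, carried out in Hölder spaces.

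\textbf{Setting up the map.} Set $X:=C^{2\gamma+\alpha}(M)\cap T_{u_*}\mathcal B$ and $Y:=C^\alpha(M)$. For $u$ near $u_*$ in $C^{2\gamma+\alpha}$, positivity is preserved, and we use the normalization $N(v):=v/\|v\|_{L^{2^*}}$ to land in $\mathcal B$. Define
\begin{align*}
\mathcal N:(K\cap\mathcal U_0)\times (K^\perp\cap X)\to K^\perp\cap Y,\qquad \mathcal N(\phi,\eta):=\pi_{K^\perp}\,\mathcal G\bigl(N(u_*+\phi+\eta)\bigr),
\end{align*}
where $\mathcal G(u):=P_\gamma^{\hat h}u-I_\gamma[u]\,u^{\frac{n+2\gamma}{n-2\gamma}}$ is the $L^2$-representative of $\tfrac12\nabla_{\mathcal B}I_\gamma$.

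\textbf{Regularity of $\mathcal N$.} $P_\gamma^{\hat h}$ maps $C^{2\gamma+\alpha}\to C^\alpha$ boundedly; this is where $2\gamma+\alpha\notin\mathbb N$ is used. Since $u_*>0$ is smooth and $s\mapsto s^{p}$ is real analytic on $(0,\infty)$, $\mathcal N$ is real analytic near $(0,0)$. The smoothness of $u_*$ follows from the extension problem \eqref{eq:Extensionpde} and weighted Schauder theory for $\mathcal A_2$ weights. We have $\mathcal N(0,0)=0$, and $D_\eta\mathcal N(0,0)=\pi_{K^\perp}\mathcal L_{u_*}|_{K^\perp\cap X}$.

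\textbf{Main step: invertibility of $D_\eta\mathcal N(0,0)$ as a map $K^\perp\cap X\to K^\perp\cap Y$.} This is where the fractional setting requires care.

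Injectivity is immediate from the definition of $K$.

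For surjectivity, take $f\in K^\perp\cap Y$ and proceed in two stages.
\begin{enumerate}
\item \emph{Weak solution.} $\mathcal L_{u_*}$ is a compact perturbation of $P_\gamma^{\hat h}+C$, which is coercive on $H^\gamma$ for $C$ large. So $\mathcal L_{u_*}$ is self-adjoint Fredholm $H^\gamma\to H^{-\gamma}$, and the Fredholm alternative gives a weak solution $\eta\in H^\gamma\cap K^\perp$. The constraint direction $u_*$ is handled by projecting with $\pi_{T_{u_*}\mathcal B}$.
\item \emph{Upgrade to Hölder.} Write $P_\gamma^{\hat h}\eta=g:=f+c\,u_*^{2^*-2}\eta$ and pass to the extension $U$ of $\eta$. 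Then $U$ solves $-\mathrm{div}(y^a\nabla U)=0$ with Robin data $-d^*_\gamma y^a\partial_yU+Q_\gamma U=g$. Moser iteration for degenerate equations (Fabes--Kenig--Serapioni type, as in \cite{GonzQing}) gives $\eta\in L^\infty$, hence $g\in L^\infty$. Weighted boundary Schauder estimates then give $\eta\in C^{\beta}$, so $g\in C^\alpha$, and a final application gives $\eta\in C^{2\gamma+\alpha}$ with $\|\eta\|_{C^{2\gamma+\alpha}}\le C(\|f\|_{C^\alpha}+\|\eta\|_{L^2})$.
\end{enumerate}
Combining this bound with injectivity through a compactness argument gives a bounded inverse. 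I expect this regularity bootstrap, through the uniformly degenerate extension of \cite{ChangGonzalez}, to be the main obstacle.

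\textbf{Conclusion.} The analytic implicit function theorem yields an analytic $F$ with $\mathcal N(\phi,F(\phi))=0$ and $F(0)=0$. Differentiating at $0$ gives $D_\eta\mathcal N\,\nabla F(0)=-\pi_{K^\perp}\mathcal L_{u_*}|_K=0$, so $\nabla F(0)=0$.

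\begin{itemize}
\item[\eqref{LS1}] Replacing $u_*+\phi+F(\phi)$ by its normalization (equivalently, solving with the constraint built in) gives $\mathcal S\subset\mathcal B$. The vanishing of the $K^\perp$-component gives $\nabla_{\mathcal B}I_\gamma=\pi_K\nabla_{\mathcal B}I_\gamma$. The chain rule, together with $\nabla F(\phi)\psi\in K^\perp$, identifies this with $\nabla q(\phi)$. Analyticity of $q$ follows from that of $F$ and $I_\gamma$.
\item[\eqref{LS3}] Differentiate the identity $\mathcal N(\phi,F(\phi))=0$. Bounding $D_\phi\mathcal N$ in $C^\alpha$ by $\|\psi\|_{C^\alpha}$, and using that $K$ is finite-dimensional so all norms on it are equivalent, gives the stated estimate.
\item[\eqref{LS2}] Let $u\in\mathcal B(u_*,\delta)$ be a critical point. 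By the same bootstrap, with uniform $C^{2\gamma+\alpha}$ bounds from Moser iteration (using $\delta$ small and the subcriticality from $L^{2^*}$ closeness as in \cite{Neumayer}), $u$ is $C^{2\gamma+\alpha}$-close to $u_*$. Uniqueness in the implicit function theorem then forces $u=u_*+\pi_K(u-u_*)+F(\pi_K(u-u_*))$. That $\pi_K(u-u_*)\in\mathcal U$ is immediate for $\delta$ small, since $\pi_K$ is bounded from $L^2$ into the finite-dimensional space $K$.
\end{itemize}
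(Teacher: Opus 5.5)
Your proposal is correct in substance and uses the same Lyapunov--Schmidt scheme, but it is organized differently from the paper's proof.

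\textbf{Different route.} The paper applies the inverse function theorem to $\mathcal N(w)=\pi_K(w-u_*)+\pi_{K^\perp}\nabla_{\mathcal B}I_\gamma(w)$ on $C^{2\gamma+\alpha}(M)\cap\mathcal B$ and sets $F(\phi)=\pi_{K^\perp}(\mathcal N^{-1}(\phi)-u_*)$. You instead apply the implicit function theorem to the $K^\perp$-component of the gradient in product coordinates $(\phi,\eta)$, after normalizing onto $\mathcal B$.

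\textbf{What your route buys.} It is more careful exactly where the paper skims.
\begin{enumerate}
\item Surjectivity is a one-line ``Fredholm alternative'' in the paper. Your $H^\gamma$ weak solution plus bootstrap through the extension is what makes it valid as a map from $C^{2\gamma+\alpha}$ to $C^\alpha$.
\item For item (3), the paper uses a Schauder estimate and an absorption argument based on the modulus of continuity of the Hessian. You differentiate $\mathcal N(\phi,F(\phi))=0$ and use a uniform bound on $(D_\eta\mathcal N)^{-1}$ near the origin (after shrinking $\mathcal U$) together with equivalence of norms on $K$, which is shorter.
\item For item (2), the paper simply asserts that an $H^\gamma$-close critical point lies in the $C^{2\gamma+\alpha}$-neighborhood where $\mathcal N$ is invertible. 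Your no-concentration and Moser/Schauder argument, with interpolation, actually justifies this.
\end{enumerate}

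\textbf{The cost: reparametrization.} Your manifold is $\{N(u_*+\phi+F(\phi))\}$, not $\{u_*+\phi+F(\phi)\}$. So a critical point $u$ near $u_*$ satisfies $\lambda u=u_*+\phi+F(\phi)$ with $\lambda=(\int_M u_*^{2^*-1}u\,d\sigma_{\hat h})^{-1}$ and $\phi=\pi_K(\lambda u-u_*)$. Uniqueness in the implicit function theorem does not give the formula with $\phi=\pi_K(u-u_*)$ that you wrote in item (2).

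This adjustment cannot be avoided if $K^\perp\subset T_{u_*}\mathcal B$, as the paper defines it. By H\"older, $\int_M u_*^{2^*-1}w\,d\sigma_{\hat h}<1$ for every $w\in\mathcal B\setminus\{u_*\}$, so $u_*+\phi+F(\phi)\in\mathcal B$ would force $\phi=0$. The paper's identity $\mathcal N^{-1}(\phi)=u_*+\phi+F(\phi)$ tacitly reads $\pi_{K^\perp}$ as $\mathrm{id}-\pi_K$ on the whole space, letting $F$ carry a component off $T_{u_*}\mathcal B$; that is your ``constraint built in'' variant. In either reading, $q(\phi)=I_\gamma[u_*+\phi+F(\phi)]$ holds verbatim, since $I_\gamma$ is $0$-homogeneous.
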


\subsection{Integrability condition}

Associated to the Lyapunov--Schmidt reduction is the notion of integrability, which characterizes whether every infinitesimal deformation in the kernel of the linearized operator is generated by a genuine family of critical points. As in the classical Yamabe problem, this property determines the local structure of the set of critical points.

\begin{definition}[Integrability]\label{def:integrable}
A critical point $u_* \in \mathcal{B}$ is said to be \emph{integrable} if, for every $\phi \in K$, there exists a one-parameter family $(u_t)_{t\in(-\delta,\delta)} \subset \mathcal{B}$ of critical points of $I_\gamma$ such that
\begin{align*}
    u_0=u_*, \qquad \left.\frac{d}{dt}\right|_{t=0}u_t=\phi.
\end{align*}
\end{definition}

\begin{lemma}[$I_\gamma$ in the integrable setting]\label{lemma:integrable}
Let $u_* \in \mathcal{M}_1$. Then $u_*$ is integrable if and only if the reduced functional $q$ is constant in a neighborhood of $0 \in K$. In particular, if $u_* \in \mathcal{M}_1$ is an integrable minimizer, then
\begin{align*}
    \mathcal{M}_1 \cap \mathcal{B} (u_*,\delta)=\mathcal{S}.
\end{align*}
\end{lemma}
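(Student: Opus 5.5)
We prove the two implications separately, using only Lemma \ref{Lemma:LS}. The key point is that, near $u_*$, critical points of $I_\gamma$ on $\mathcal B$ correspond exactly to critical points of the reduced functional $q$, and these critical points sit on the graph $\mathcal S$.

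\emph{($q$ locally constant $\Rightarrow$ $u_*$ integrable).} Suppose $q\equiv \Lambda_\gamma(M,[\hat h])$ on a neighborhood $\mathcal U'\subset\mathcal U$ of $0$. Then $\nabla q\equiv 0$ on $\mathcal U'$. By Lemma \ref{Lemma:LS}\eqref{LS1} this gives $\nabla_{\mathcal B}I_\gamma[u_*+\phi+F(\phi)]=0$ for all $\phi\in\mathcal U'$. So every point of $\mathcal S$ over $\mathcal U'$ is a critical point, and indeed a minimizer, since $I_\gamma$ equals $\Lambda_\gamma$ there.

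Given $\phi\in K$, set $u_t:=u_*+t\phi+F(t\phi)$ for $|t|$ small. Then $u_t\in\mathcal S\subset\mathcal B$ and $u_0=u_*$, because $F(0)=0$. Moreover
\begin{align*}
\left.\frac{d}{dt}\right|_{t=0}u_t=\phi+\nabla F(0)[\phi]=\phi,
\end{align*}
since $\nabla F(0)=0$. Hence $u_*$ is integrable.

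\emph{($u_*$ integrable $\Rightarrow$ $q$ locally constant).} This is the harder direction, and I would follow the classical argument of Adams--Simon as used in \cite{Neumayer}.

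Fix $\phi\in K$ and let $(u_t)$ be the family from Definition \ref{def:integrable}. For small $t$ we have $u_t\in\mathcal B(u_*,\delta)$, so Lemma \ref{Lemma:LS}\eqref{LS2} gives $u_t=u_*+\phi_t+F(\phi_t)$ with $\phi_t:=\pi_K(u_t-u_*)$. By Lemma \ref{Lemma:LS}\eqref{LS1}, each $\phi_t$ is a critical point of $q$.

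Differentiating at $t=0$ gives $\dot\phi_0=\pi_K\phi=\phi$. Thus the set $Z:=\{\nabla q=0\}\subset\mathcal U$ contains a $C^1$ curve through $0$ with tangent $\phi$, for every $\phi\in K$.

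The main obstacle is to pass from "curves in every direction" to "$Z$ contains a neighborhood of $0$". Here I would use the real analyticity of $q$. The zero set $Z$ of the analytic map $\nabla q$ is a real analytic set, so near $0$ it is a finite union of analytic strata. Its tangent cone at $0$ contains every direction of $K$, so it is all of $K$. An analytic set whose tangent cone at a point is the whole ambient space contains a neighborhood of that point: otherwise $Z$ would lie in a proper analytic subset of dimension $<l$, whose tangent cone has dimension $<l$.

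An alternative is Taylor expansion. Write $q-q(0)=\sum_{k\ge p}q_k$ with $q_p\neq0$ homogeneous of degree $p$. Then $\nabla q(t\phi+o(t))=0$ yields $\nabla q_p(\phi)=0$ for all $\phi$, so $q_p\equiv0$, a contradiction. This needs care with the $o(t)$ error, since the curves need not be analytic. I would use the stratification argument as the primary route.

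Once $Z\supset\mathcal U'$ with $\mathcal U'$ connected, $q$ is constant there, equal to $q(0)=\Lambda_\gamma(M,[\hat h])$.

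\emph{Identification $\mathcal M_1\cap\mathcal B(u_*,\delta)=\mathcal S$.} Shrinking $\delta$ and $\mathcal U$, the first part shows $\mathcal S\subset\mathcal M_1$. For the reverse inclusion, any $u\in\mathcal M_1\cap\mathcal B(u_*,\delta)$ is a critical point. By Lemma \ref{Lemma:LS}\eqref{LS2}, $u=u_*+\pi_K(u-u_*)+F(\pi_K(u-u_*))\in\mathcal S$. This gives the equality, with $\mathcal S$ understood over the shrunken neighborhood.
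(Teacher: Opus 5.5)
Your proof is correct and is essentially the finite-dimensional argument the paper defers to (\cite[Lemma~2.4]{Neumayer}). Property (1) of Lemma~\ref{Lemma:LS} with $u_t=u_*+t\phi+F(t\phi)$ gives integrability, and property (2) together with the analyticity of $q$ gives both the converse and the identification $\mathcal{M}_1\cap\mathcal{B}(u_*,\delta)=\mathcal{S}$. The Taylor route you set aside needs no extra care and is cleaner than the tangent-cone dimension theorem your primary route relies on: since $\phi_t=t\phi+o(t)$ and $\nabla q_p$ is continuous and $(p-1)$-homogeneous, $t^{1-p}\nabla q(\phi_t)=\nabla q_p(\phi+o(1))+O(t)\to\nabla q_p(\phi)$, so $\nabla q_p\equiv 0$ and Euler's identity forces $q_p\equiv 0$.
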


The proof is identical to that of the classical Yamabe problem. Indeed, once the Lyapunov--Schmidt reduction has been established, the reduced functional $q$ is a real analytic function on the finite-dimensional space $K$. The argument therefore becomes purely finite-dimensional and carries over verbatim; see \cite[Lemma~2.4]{Neumayer} or \cite[Lemma 3.4]{CajuVDB}.

\section{Quantitative stability on general manifolds}\label{Section:4}
To prove the main result we prove a local version first, and then conclude by a compactness argument the global result.

\subsection{Local stability}
In this section we establish a local version of Theorem \ref{thm:mainv1}, which is Proposition \ref{Proposition:localstab}. For that we need a quantity that measures how far is a function from minimizers that are $\delta$-close to a fixed one $u_*$. Given $\delta>0$ and a fixed $u_* \in \mathcal{M}_1$ we define
\begin{align*}
    d_\delta (u,\mathcal{M}_1 ) := \frac{\inf\{\|u - \hat{u}\|_{H^\gamma (M)} : \hat{u} \in \mathcal{M}_1 \cap \mathcal{B}(u_* ,\delta) \} }{\|u\|_{H^\gamma (M)}}.
\end{align*}
\begin{proposition}[Local stability]\label{Proposition:localstab}
    Let $(M^n , [\hat{h}])$ be the conformal infinity of an asymptotically hyperbolic manifold $(X^{n+1},g^+ )$ such that $\Lambda_{\gamma} (M^n ,[\hat{h}]) < \Lambda_{\gamma} (\mathbb{S}^n ,[h_{0}])$. Then, there exist constants $C>0$, $\delta>0$ and $\theta \geq 0$ such that
    \begin{align*}
        I_{\gamma} [u,\hat{h}]  - \Lambda_{\gamma} (M,[\hat{h}]) \geq C d_{\delta}(u,\mathcal{M}_1)^{2+\theta} \, ,
    \end{align*}
    for all $u \in \mathcal{B}(u_*,\delta)$. If $u_*$ is non-degenerate or integrable, then we may take $\theta =0$.
\end{proposition}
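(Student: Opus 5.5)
We follow the strategy of \cite[Proposition~3.1]{Neumayer} (see also \cite{CajuVDB}), using the Lyapunov--Schmidt reduction of Lemma \ref{Lemma:LS}. Given $u\in\mathcal B(u_*,\delta)$, we set $\phi:=\pi_K(u-u_*)\in\mathcal U$ and write
\[
u=u_*+\phi+F(\phi)+\eta,\qquad \eta:=u-u_*-\phi-F(\phi).
\]
Here $\eta$ lies in $K^\perp$ up to a correction coming from the constraint $\|u\|_{L^{2^*}}=1$. That correction is quadratically small, since $\mathcal S\subset\mathcal B$ and the tangent space $T_{u_*}\mathcal B$ depends continuously on the base point. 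Since $\nabla F(0)=0$, both $\|\eta\|_{H^\gamma}$ and $\|\phi\|$ are $O(\delta)$. We Taylor expand $I_\gamma$ around the point $v_\phi:=u_*+\phi+F(\phi)\in\mathcal S$:
\[
I_\gamma[u]=q(\phi)+\nabla_{\mathcal B}I_\gamma[v_\phi](\eta)+\tfrac12\nabla^2_{\mathcal B}I_\gamma[v_\phi](\eta,\eta)+o(\|\eta\|_{H^\gamma}^2).
\]
By Lemma \ref{Lemma:LS}(\ref{LS1}), the gradient at $v_\phi$ lies in $K$. Since $\eta$ is essentially orthogonal to $K$, the linear term is bounded by $|\nabla q(\phi)|$ times a small multiple of $\|\eta\|$, and it vanishes up to higher-order errors.

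For the quadratic term we use that $\mathcal L_{u_*}$ is nonnegative, Fredholm, and self-adjoint, with kernel exactly $K$. This gives a spectral gap $\nabla^2_{\mathcal B}I_\gamma[u_*](\eta,\eta)\ge \lambda\|\eta\|_{H^\gamma}^2$ on $K^\perp$, obtained from the G\aa rding-type equivalence $\int \eta P_\gamma^{\hat h}\eta+C\|\eta\|_{L^2}^2\simeq\|\eta\|_{H^\gamma}^2$ via the extension energy. The uniform $H^\gamma$-continuity in Lemma \ref{lemma:Variationalppties}(\ref{lemma:variat_2}) transfers this coercivity to $v_\phi$ for $\delta$ small. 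We therefore obtain
\[
I_\gamma[u]-\Lambda_\gamma\ \ge\ \big(q(\phi)-q(0)\big)+c\|\eta\|_{H^\gamma}^2-C|\nabla q(\phi)|\,\|\eta\|.
\]
The cross term is absorbed by Young's inequality, leaving a contribution $|\nabla q(\phi)|^2$. In the degenerate case this term is controlled by Łojasiewicz, as in the next step.

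For the reduced part, $q$ is real analytic on the finite-dimensional space $K$ and attains its minimum $q(0)=\Lambda_\gamma$ at $0$. This holds because $\mathcal S\subset\mathcal B$ and $u_*$ is a global minimizer. The Łojasiewicz distance inequality then gives $q(\phi)-q(0)\ge c\,\mathrm{dist}(\phi,Z)^{2+\theta}$, where $Z=\{q=q(0)\}$ near $0$, together with the gradient inequality $|\nabla q|\ge c(q-q(0))^{1-\beta}$. The latter allows us to handle the $|\nabla q|^2$ term by splitting into the cases $|\nabla q(\phi)|$ small or large relative to $\|\eta\|$.

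Next we need that each $\psi\in Z$ produces $v_\psi\in\mathcal M_1\cap\mathcal B(u_*,\delta)$. This follows since $I_\gamma[v_\psi]=\Lambda_\gamma$. Combined with $\|v_\phi-v_\psi\|_{H^\gamma}\le C\|\phi-\psi\|$, which uses Lemma \ref{Lemma:LS}(\ref{LS3}) and the equivalence of norms on the finite-dimensional $K$, we get
\[
\inf_{\hat u}\|u-\hat u\|_{H^\gamma}\le \|\eta\|+C\,\mathrm{dist}(\phi,Z).
\]
Since $\|u\|_{H^\gamma}$ is bounded above and below on $\mathcal B(u_*,\delta)$, the result follows with exponent $2+\theta$, using $\|\eta\|^2\ge c\|\eta\|^{2+\theta}$ for small $\|\eta\|$.

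For the case $\theta=0$: if $u_*$ is non-degenerate then $K=\{0\}$ and only the quadratic estimate remains. If $u_*$ is integrable, Lemma \ref{lemma:integrable} shows that $q$ is constant, so $Z=\mathcal U$, and the distance is bounded by $\|\eta\|$, which is controlled quadratically.

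The main obstacle is making the spectral gap and the Taylor remainder uniform in $H^\gamma$ for the nonlocal operator. This includes controlling the $o(\|\eta\|^2)$ remainder of the critical nonlinearity $u^{2^*}$, which we handle through Sobolev embedding and the uniform modulus of continuity in Lemma \ref{lemma:Variationalppties}. It also includes the careful bookkeeping of the normalization constraint when passing from $T_{v_\phi}\mathcal B$ to $K^\perp\subset T_{u_*}\mathcal B$.
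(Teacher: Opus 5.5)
Your argument follows the paper's proof step for step. You split $I_\gamma[u]-\Lambda_\gamma(M,[\hat h])=(I_\gamma[u]-I_\gamma[P(u)])+(q(\phi)-q(0))$ with $P(u)=u_*+\phi+F(\phi)$. You use the spectral gap on $K^\perp$, transferred to nearby points by Lemma \ref{lemma:Variationalppties}(\ref{lemma:variat_2}). You apply the \L{}ojasiewicz distance inequality to $q$, compare $\operatorname{dist}(\phi,Z)$ with the $H^\gamma$-distance to $\mathcal M_1\cap\mathcal B(u_*,\delta)$ via Lemma \ref{Lemma:LS}(\ref{LS3}), and close with the triangle inequality. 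The $\theta=0$ cases are treated identically.

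The only departure is the first-order term. The paper declares it zero because $u-P(u)\in K^\perp$ and $\pi_{K^\perp}\nabla_{\mathcal B}I_\gamma[P(u)]=0$. You note, correctly, that $u-P(u)$ lies in $T_{u_*}\mathcal B$ only up to a component along $u_*^{\frac{n+2\gamma}{n-2\gamma}}$. That component has size $1-\int_M u_*^{\frac{n+2\gamma}{n-2\gamma}}u\,d\sigma_{\hat h}=O(\|\eta\|^2+|\phi|\,\|\eta\|)$, so a cross term of order $|\nabla q(\phi)|\,(|\phi|+\|\eta\|)\,\|\eta\|$ survives. That is a legitimate refinement of a point the paper glosses over.

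However, the way you dispose of this cross term fails as written. After Young's inequality you must absorb a term $-C\varepsilon^2|\nabla q(\phi)|^2$, with $\varepsilon\to0$ as $\delta\to0$, into $q(\phi)-q(0)$. That requires an \emph{upper} bound for $|\nabla q|$ in terms of $q-q(0)$. The \L{}ojasiewicz gradient inequality $|\nabla q|\ge c\,(q-q(0))^{1-\beta}$ is a \emph{lower} bound. Splitting according to the size of $|\nabla q(\phi)|$ relative to $\|\eta\|$ does not help: in the regime $|\nabla q(\phi)|\gtrsim\|\eta\|$ you are still left with $q(\phi)-q(0)-C|\nabla q(\phi)|^2$ and nothing controlling the negative term.

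The correct ingredient is elementary. Since $q\ge q(0)$ on a neighborhood of $0$ and $\|\nabla^2 q\|\le L$ there, evaluating $q$ at $\phi-\nabla q(\phi)/L$ gives $|\nabla q(\phi)|^2\le 2L\,(q(\phi)-q(0))$ for $\phi$ near $0$. In fact $L$ can be taken $O(r)$ on $B(0,r)$, because $\nabla^2 q(0)=0$. With this bound and the small prefactor, the cross term is absorbed into $\tfrac12(q(\phi)-q(0))+\tfrac{c}{2}\|\eta\|^2$, and the rest of your proof goes through unchanged. In the integrable and non-degenerate cases $\nabla q\equiv0$, so the issue does not arise there.
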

\begin{proof}[Proof of Proposition \ref{Proposition:localstab}]
    Given a fixed minimizer $u_* \in \mathcal{M}_1$, we denote by $F : \mathcal{U} \to K^\perp$ the Lyapunov-Schmidt reduction map adapted to $u_*$ as in Lemma (\ref{Lemma:LS}), where $K$ is the kernel of $\nabla_{\mathcal{B}}^2 I_\gamma [u_*]$. Thanks to Lemma \ref{Lemma:LS}(\ref{LS2}), for any $u \in \mathcal{B}(u_* ,\delta)$, we can define the Lyapunov-Schmidt projection onto the critical manifold $\mathcal{S}$
    \begin{align*}
        P(u) = u_* + \pi_K (u - u_*) + F(\pi_K (u - u_*)) \in \mathcal{S}.
    \end{align*}
    Since $P(u_*)=u_*$ and $P$ is continuous, after making $\delta$ smaller if necessary, we may assume that
    \begin{align*}
        \|P(u) - u_*\|_{H^\gamma (M)}<\frac{\varepsilon}{2},
            \qquad
        \|u - P(u)\|_{H^\gamma (M) }< \frac{\varepsilon}{2}
    \end{align*}
    for every \(u\in\mathcal B(u_*,\delta)\). Now, to estimate the deficit, we split into two terms
    \begin{align*}
        I_\gamma [u] - \Lambda_\gamma (M,[\hat{h}]) = I_\gamma [u] - I_\gamma [P(u)] + I_\gamma [P(u)] - \Lambda_\gamma (M,[\hat{h}]),
    \end{align*}
    and estimate them separately.

    For the first term, we perform a Taylor expansion of $I_\gamma$ around $P(u)$ to get
    \begin{align*}
        I_\gamma [u] - I_\gamma [P(u)] =& \nabla_{ \mathcal{B} } I_\gamma [P(u)] [u - P(u)] + \frac{1}{2}\nabla_{\mathcal{B} }^2 I_\gamma [\zeta] [u - P(u),u - P(u)] \\
        =& \frac{1}{2}\nabla_{\mathcal{B}}^2 I_\gamma [\zeta] [u - P(u),u - P(u)] \\
        =& \frac{1}{2}\nabla_{\mathcal{B}}^2 I_\gamma [u_*] [u - P(u),u - P(u)] + \|u - P(u)\|^2 o(1) \, ,
    \end{align*}
    where $\zeta$ is a geodesic in $\mathcal{B}$ joining $u$ and $P(u)$. The second equality holds since $(u-P(u)) \in K^\perp$, and
    \begin{align*}
        \pi_{K^\perp} \nabla_{\mathcal{B}}I_\gamma [P(u)] = 0 \, .
    \end{align*}
    Meanwhile, the last equality comes from the uniform modulus of continuity of $\nabla_{\mathcal{B}}^2 I_\gamma [-]$ and the properties given in \eqref{lemma:Variationalppties}. More explicitly, by (\ref{lemma:variat_2}), we get
    \begin{align*}
        \frac{1}{2}\nabla_{\mathcal{B}}^2 I_\gamma [\zeta] = \frac{1}{2}\nabla_{\mathcal{B}}^2 I_\gamma [u_*] + o(1),
    \end{align*}
    since $\|\zeta - u_*\| < \varepsilon$. Because $u_*$ is a minimizer, the quadratic form associated to
    $\nabla_{\mathcal B}^2 I_\gamma[u_*]$ is nonnegative. Moreover, $\mathcal{L}_{u_*}$ is a self-adjoint elliptic pseudodifferential operator
    of order $2\gamma$ on the compact manifold $M$, and therefore it has discrete spectrum. Hence, its restriction to $K^\perp$ has a positive spectral gap. So, noting that $(u - P(u))\in K^\bot$, we take the first positive eigenvalue $\lambda_1>0$ to get the coercivity bound
    \begin{align*}
        I_\gamma [u] - I_\gamma [P(u)] =& \frac{1}{2}\nabla_{\mathcal{B}}^2 I_\gamma [u_*] [u - P(u),u - P(u)] + \|u - P(u)\|_{H^\gamma (M)}^2 o(1) \\
        \geq& \frac{1}{2}\lambda_1 \|u - P(u)\|_{H^\gamma (M)}^2 + \|u - P(u)\|_{H^\gamma (M)}^2 o(1)\\
        \geq& C \|u - P(u)\|_{H^\gamma (M)}^2 .
    \end{align*}
    Note that this constant $C$ depends on the minimizer $u_*$.

    \medskip

    For the second term we recall that if $u_*$ is nondegenerate, $P(u) = u_*$ and hence the term vanishes. If $u_*$ is degenerate but integrable, $I_\gamma$ is locally constant around $u_*$ by Lemma \ref{lemma:integrable}, hence $I_\gamma [P(u)] = I_\gamma [u_*] $.

    \medskip
    
    However, for the non-integrable case we work a little bit more using the
\L ojasiewicz inequality. Indeed, bounding the second term and writing
$\phi:=\pi_K(u-u_*)$, we obtain
\begin{align*}
    I_\gamma[P(u)]-\Lambda_\gamma(M,[\hat h])
    &=
    I_\gamma[P(u)]-I_\gamma[u_*]\\
    &=
    q(\phi)-q(0)\\
    &\geq
    C\inf\left\{
    |\phi-\hat\phi|:\,
    \hat\phi\in B(0,\delta)\cap K,\,
    q(\hat\phi)=q(0)
    \right\}^{2+\theta}.
\end{align*}
We now claim that, since $F$ satisfies the Lipschitz estimate
\eqref{LS3}, the finite-dimensional distance to the minimum level set of
$q$ controls the infinite-dimensional distance from $P(u)$ to the nearby
minimizers. More precisely,
\begin{align}\label{eq:distancecomparison}
    \inf \big\{|\phi-\hat{\phi}| :\,
    &\hat{\phi}\in B(0,\delta)\cap K,\,
    q(\hat{\phi})=q(0)\big\} \notag\\
    &\geq C
    \inf \big\{
    \|P(u)-\hat{u}\|_{H^\gamma(M)}:\,
    \hat{u}\in\mathcal{M}_1\cap\mathcal{B}(u_*,\delta)
    \big\}.
\end{align}
Indeed, for every $\hat{\phi}\in B(0,\delta)\cap K$ such that
$q(\hat{\phi})=q(0)$, let
\[
    \hat{u}:=u_*+\hat{\phi}+F(\hat{\phi}).
\]
Since $\hat{u}\in\mathcal{S}\subset\mathcal{B}$ and
\[
    I_\gamma[\hat{u}]
    =q(\hat{\phi})
    =q(0)
    =I_\gamma[u_*]
    =\Lambda_\gamma(M,[\hat h]),
\]
we have $\hat{u}\in\mathcal{M}_1$. After decreasing $\delta$ if necessary,
we may also assume that $\hat{u}\in\mathcal{B}(u_*,\delta)$. Since
$P(u)=u_*+\phi+F(\phi)$, we obtain
\begin{align*}
    \|P(u)-\hat{u}\|_{H^\gamma(M)}
    &\leq
    \|\phi-\hat{\phi}\|_{H^\gamma(M)}
    +
    \|F(\phi)-F(\hat{\phi})\|_{H^\gamma(M)}
    \\
    &\leq
    \|\phi-\hat{\phi}\|_{H^\gamma(M)}
    +
    C\|\phi-\hat{\phi}\|_{C^\alpha(M)}
    \\
    &\leq
    C|\phi-\hat{\phi}|,
\end{align*}
where in the last inequality we used the equivalence of norms on the
finite-dimensional space $K$. Taking the infimum over all
$\hat{\phi}\in B(0,\delta)\cap K$ satisfying $q(\hat{\phi})=q(0)$ proves
\eqref{eq:distancecomparison}. Consequently, the
    finite-dimensional \L ojasiewicz inequality gives
    \begin{align}\label{eq:reduced-distance}
        I_\gamma [P(u)]-\Lambda_\gamma(M,[\hat{h}])
        \geq C
        \inf\big\{
        \|P(u)-\hat{u}\|_{H^\gamma (M)}:\,
        \hat{u}\in\mathcal{M}_1 \cap \mathcal{B}(u_*,\delta)
        \big\}^{2+\theta}.
    \end{align}

    On the other hand, the estimate for the transverse component gives
    \begin{align}\label{eq:transverse-distance}
        I_\gamma [u]-I_\gamma [P(u)]
        \geq C\|u - P(u)\|_{H^\gamma (M)}^2.
    \end{align}
    After decreasing $\delta$ if necessary, we may assume that
    $\|u - P(u)\|_{H^\gamma (M)}\leq 1$. Since $2+\theta \geq 2$, it follows that
    \begin{align*}
        \|u - P(u)\|_{H^\gamma (M)}^2
        \geq
        \|u - P(u)\|_{H^\gamma (M)}^{2+\theta}.
    \end{align*}
    Combining \eqref{eq:reduced-distance} and
    \eqref{eq:transverse-distance}, we obtain
    \begin{align*}
        I_\gamma [u]-\Lambda_\gamma(M,[\hat{h}])
        \geq C\Big(
        &\|u - P(u)\|_{H^\gamma (M)}^{2+\theta}
        \\
        &+
        \inf\big\{
        \|P(u)-\hat{u}\|_{H^\gamma (M)}:\,
        \hat{u}\in\mathcal{M}_1 \cap \mathcal{B}(u_*,\delta)
        \big\}^{2+\theta}
        \Big).
    \end{align*}
    Finally, by the triangle inequality,
    \begin{align*}
        \inf\big\{
        \|u - \hat{u}\|_{H^\gamma (M)}:\,
        \hat{u}\in\mathcal{M}_1 \cap \mathcal{B}(u_*,\delta)
        \big\}
        \leq{}&
        \|u - P(u)\|_{H^\gamma (M)}
        \\
        &+
        \inf\big\{
        \|P(u)-\hat{u}\|_{H^\gamma (M)}:\,
        \hat{u}\in\mathcal{M}_1 \cap \mathcal{B}(u_*,\delta)
        \big\}.
    \end{align*}
    Using $(a+b)^{2+\theta}\leq
    C(a^{2+\theta}+b^{2+\theta})$, we conclude that
    \begin{align*}
        I_\gamma [u]-\Lambda_\gamma(M,[\hat{h}])
        \geq C
        \inf\big\{
        \|u - \hat{u}\|_{H^\gamma (M)}:\,
        \hat{u}\in\mathcal{M}_1 \cap \mathcal{B}(u_*,\delta)
        \big\}^{2+\theta}.
    \end{align*}
\end{proof}

\subsection{Global quantitative stability}
We now combine the local stability estimates obtained in the previous subsection with the compactness of minimizing sequences to establish the global quantitative stability result. The key point is that sufficiently small energy deficit forces a function to lie in a neighborhood of the set of minimizers, where the local analysis applies.

The required compactness statement is proved in Appendix \ref{appendix:B}. Its proof relies on concentration-compactness arguments due to P.-L. Lions \cite{Lions1,Lions2}, see also \cite{Valdinocci}, together with an \emph{almost} weighted Sobolev trace inequality established by Jin and Xiong \cite{JinXiong}. We just state the result for the sake of completeness of the section.

\begin{lemma}[Compactness of minimizing sequences]\label{lemma:compacntess}
Let $(M^n,[\hat{h}])$ be the conformal infinity of an asymptotically hyperbolic manifold $(X^{n+1},g^+)$ satisfying $\Lambda_{\gamma}(M,[\hat{h}])<
\Lambda_{\gamma}(\mathbb{S}^n,[h_0]).$ Let $(w_k)_{k\in\mathbb N}\subset\mathcal{B}$ be a minimizing sequence, namely $I_\gamma[w_k,\hat{h}] \to \Lambda_\gamma(M,[\hat{h}])$. Then, up to a subsequence, $w_k\to w_*$ strongly in $H^\gamma(M)$, for some minimizer $w_* \in \mathcal{M}_1$.
\end{lemma}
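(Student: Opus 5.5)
I would follow the subcritical/concentration-compactness scheme on the extension side, since that is where a Sobolev inequality with the sharp constant and a lower-order remainder is available. First I would establish uniform bounds and weak limits. Since $w_k\in\mathcal B$ and $I_\gamma[w_k]\to\Lambda_\gamma(M,[\hat h])$, the quadratic form $\int_M w_kP_\gamma^{\hat h}w_k\,d\sigma_{\hat h}$ is bounded. Using the extension $W_k$ of $w_k$ (the energy-minimizing solution of $-\operatorname{div}(y^a\nabla W)=0$ with trace $w_k$), this quantity equals $d_\gamma^*\int_X y^a|\nabla W_k|^2+\int_M Q_\gamma^{\hat h}w_k^2$. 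A Gårding-type inequality $\|w\|_{H^\gamma}^2\le C(\int_M wP_\gamma^{\hat h}w+\|w\|_{L^2}^2)$ and the normalization $\|w_k\|_{L^{2^*}}=1$ then give boundedness of $w_k$ in $H^\gamma(M)$. Passing to a subsequence, I get $w_k\rightharpoonup w_*$ weakly in $H^\gamma$, strongly in $L^2$ and $L^p$ for $p<2^*$, and a.e.; moreover $w_*\ge0$.

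The key step is to rule out loss of $L^{2^*}$ mass through concentration. For this I would invoke the almost sharp weighted trace inequality of Jin and Xiong \cite{JinXiong}: for every $\varepsilon>0$ there is $C_\varepsilon$ such that
\begin{align*}
\|w\|_{L^{2^*}(M)}^2\le (\bar S(n,\gamma)+\varepsilon)\int_X y^a|\nabla W|^2\,dv_{\bar g^*}+C_\varepsilon\|w\|_{L^2(M)}^2 .
\end{align*}
Setting $v_k=w_k-w_*$, I would use the Brezis--Lieb lemma, $1=\|w_*\|_{2^*}^{2^*}+\|v_k\|_{2^*}^{2^*}+o(1)$, together with the Hilbert-space splitting $\int w_kP_\gamma w_k=\int w_*P_\gamma w_*+d_\gamma^*\int y^a|\nabla V_k|^2+o(1)$. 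The $L^2$ and $Q_\gamma$ terms of $v_k$ vanish because $v_k\to0$ in $L^2$. Write $A=\|w_*\|_{2^*}^{2^*}$. Applying the inequality above to $v_k$ and using $d_\gamma^*\bar S=d_\gamma^{*2}S$ (so that $d_\gamma^*/(\bar S+\varepsilon)$ approaches $\Lambda_\gamma(\mathbb S^n)$), I obtain in the limit
\begin{align*}
\Lambda_\gamma(M)\ \ge\ \Lambda_\gamma(M)A^{2/2^*}+\Lambda_\gamma(\mathbb S^n)(1-A)^{2/2^*}.
\end{align*}
Because $2/2^*<1$, strict concavity combined with the assumption $\Lambda_\gamma(M)<\Lambda_\gamma(\mathbb S^n)$ forces $A=1$; here I assume $\Lambda_\gamma(M)>0$. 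The nonpositive case is handled similarly, or is easier, since then the term involving $w_*$ already carries a nonpositive coefficient and the inequality still forces $A=1$.

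Once $A=1$, I conclude as follows. Since $\|w_*\|_{2^*}=1=\lim\|w_k\|_{2^*}$, uniform convexity gives $w_k\to w_*$ in $L^{2^*}$. Weak lower semicontinuity of the energy then yields $I_\gamma[w_*]\le\liminf I_\gamma[w_k]=\Lambda_\gamma$, so $w_*\in\mathcal M_1$. The energies also converge, and the splitting then shows $\int y^a|\nabla V_k|^2\to0$. By the Gårding inequality this gives $v_k\to0$ in $H^\gamma$. Positivity of $w_*$ follows from the Euler--Lagrange equation and a maximum principle for the extension problem.

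I expect the main obstacle to be the concentration step: verifying that the Jin--Xiong inequality applies with the normalization constant $d_\gamma^*$ matching the model sharp constant, and that the cross terms in the splitting vanish in the nonlocal setting. I would handle both by working entirely with extensions, where the splitting is just Hilbert-space orthogonality in $W^{1,2}(X,y^a)$.
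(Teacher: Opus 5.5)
Your proposal is correct, but it reaches the conclusion by a different route than the paper.

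\textbf{The paper's route.} The paper works on the extension side with an arbitrary normalized minimizing sequence $U_k$ of $I_\gamma^*$.
\begin{itemize}
\item It applies the Jin--Xiong inequality to $\varphi(U_k-U)$ for test functions $\varphi$. This yields a reverse H\"older inequality between the defect measures $\widetilde\mu$ on $X$ and $\widetilde\nu$ on $M$.
\item It then invokes Lions' lemma to write $\widetilde\nu=\sum_j\nu_j\delta_{x_j}$ with $d_\gamma^*\mu_j\geq\Lambda_\gamma(\mathbb S^n,[h_0])\,\nu_j^{2/2^*}$.
\item Subadditivity of $s\mapsto s^{2/2^*}$ gives $\Lambda_\gamma(M)\geq\Lambda_\gamma(M)\,t^{2/2^*}+\Lambda_\gamma(\mathbb S^n)(1-t)^{2/2^*}$.
\item The statement for $I_\gamma$ follows by taking weighted harmonic extensions and the trace.
\end{itemize}

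\textbf{Your route.} You apply the almost sharp trace inequality once, globally, to the extension $V_k$ of the whole remainder $v_k=w_k-w_*$. You combine this with the Brezis--Lieb splitting of the $L^{2^*}$ mass and the orthogonal splitting of the quadratic form, where the cross term vanishes by weak convergence. This gives the same key inequality in $A$ directly, with no defect measures, no Lions lemma, and no subadditivity step.

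\textbf{Comparison.} The paper's localized inequality is in the end only tested with $\varphi\equiv1$, so your global version loses nothing for this lemma and is shorter. It also handles $\Lambda_\gamma(M,[\hat h])\le 0$ uniformly, a case the paper sets aside by appeal to the direct method. What the paper's approach buys is structural information on how compactness could fail: mass can only concentrate at boundary points, and each atom costs at least the spherical energy. It also states compactness for general, not necessarily harmonic, minimizing sequences of $I_\gamma^*$. Your splitting argument would work verbatim in that setting too, using orthogonality and weak convergence in $W^{1,2}(X,y^a)$.

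\textbf{Two small points.}
\begin{itemize}
\item The paper's version of the Jin--Xiong inequality has lower-order term $C_\varepsilon\int_X y^aU^2\,dv_{\bar g^*}$, not $C_\varepsilon\|w\|_{L^2(M)}^2$. This is harmless for you: $V_k\rightharpoonup0$ in $W^{1,2}(X,y^a)$, since the extension operator is bounded and linear, and the compact embedding into $L^2(X,y^a)$ then gives $\int_X y^aV_k^2\to0$.
\item The normalization you actually need is $d_\gamma^*/\bar S(n,\gamma)=S(n,\gamma)^{-1}=\Lambda_\gamma(\mathbb S^n,[h_0])$, which follows from $\bar S=d_\gamma^*S$. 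Your phrase ``$d_\gamma^*\bar S=d_\gamma^{*2}S$'' is true but not the identity being used.
\end{itemize}
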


We now prove the global stability result, Theorem \ref{thm:mainv1}.
\begin{proof}[Proof of Theorem \ref{thm:mainv1}]
    This proof is essentially turning the local estimates obtained in Proposition \ref{Proposition:localstab} into global using the compactness of minimizing sequences, see Lemma \ref{lemma:compacntess}. Indeed, fix a minimizer $w_* \in \mathcal{M}_1$, and let $\delta>0$, $\theta\geq 0$ and $C>0$ be the constants given by Proposition \ref{Proposition:localstab}. By the compactness result, $\mathcal{M}_1$ is compact in $H^\gamma (M)$ hence we may cover by balls $\mathcal{B}(w_*, \delta / 4)$ and extract a finite cover $\{\mathcal{B}(w_i ,\delta (w_i) / 4)\}_{i\in I}$. Then define
    \begin{align*}
        &\delta_0 := \min_{i\in I} \delta(w_i)/2 >0 \, ,\\
        &\theta_0 := \max_{i\in I} \theta(w_i) < \infty \, ,\\
        &C_0 := \min_{i\in I} C(w_i) >0 \, .
    \end{align*}
    Now, since $\mathcal{M}_1 \subset \mathcal{M}$ and the inequality desired in \ref{thm:mainv1} is zero-homogeneous in $w$ we can work for $w\in \mathcal{B}$ without losing generality. Let $w\in \mathcal{B}$ be such that $d(w,\mathcal{M}_1) < \delta_0 /4 $. Then, we can find an $i\in I$ such that $\|w - w_i\|_{H^\gamma} < \delta_i /2$. By the triangle inequality, if $\tilde{w} $ is the closest element of $\mathcal{M}_1$ to $w$, we have $\|\tilde{w} - w_i\| < \delta_i$. Thus we apply Proposition \ref{Proposition:localstab} to obtain
    \begin{align*}
        I_\gamma [w,\hat{h}] - \Lambda_\gamma (M,[\hat{h}]) \geq c(w_i) d_{\delta_i }(w,\mathcal{M}_1)^{2+ \theta_i} \geq C_0 d(w,\mathcal{M}_1)^{2+ \theta_0}\, , 
    \end{align*}
    which is the desired result.

    \medskip

    We are left with the case $d(w,\mathcal{M}_1) >\delta_0 /4$. Using the fact that $w $ has unit norm we obtain, after applying the triangle inequality and a normalization argument, that $d(w,\mathcal{M})>\delta_0 /16$. By Lemma \ref{lemma:compacntess}, there exists $\varepsilon>0$ such that
    \begin{align*}
        I_\gamma [w,\hat{h}]-\Lambda_\gamma (M,[\hat{h}]) < \varepsilon \Longrightarrow d(w,\mathcal{M}) < \delta_0 /16 \, .
    \end{align*}
    Hence, since $d(w,\mathcal{M})>\delta_0 /16$ we have $I_\gamma [w,\hat{h}]-\Lambda_\gamma (M,[\hat{h}]) > \varepsilon$. Since $d(w,\mathcal{M}) \leq 1$, letting $C := \min \{C_0 ,\varepsilon\}$, we have proved the stability for all $w \in \mathcal{B}$.
\end{proof}

Using the equivalence given by \cite{ChangGonzalez}, we immediately have an analogous stability result for the extension problem. For that we define, for any $U\in W^{1,2}(X,y^a)$, the extension distance
\begin{align*}
    d(U,\mathcal{M}^*) := \frac{\inf\{\|U - U_*\|_{W^{1,2}(X,y^a)} : U_* \in \mathcal{M}^* \}}{\|U\|_{W^{1,2}(X,y^a)}},
\end{align*}
where $\mathcal{M}^*$ is the set of minimizers of $I_\gamma^*[-,\bar{g}^*]$. We now establish the corollary for the extension.
\begin{corollary}[Quantitative Stability of $I_\gamma^*$] Let $(M^n , [\hat{h}])$ be the conformal infinity of an asymptotically hyperbolic manifold $(X^{n+1},g^+ )$ such that $\Lambda_{\gamma} (M,[\hat{h}]) < \Lambda_{\gamma} (\mathbb{S}^n ,[h_{0}])$. Then, there exist constants $C>0$ and $\theta \geq 0$ such that
\begin{align*}
    I_\gamma^* [U,\bar{g}^*] - \Lambda_\gamma (M,[\hat{h}]) \geq C d(U, \mathcal{M}^*)^{2+\theta},
\end{align*}
for all $U \in W^{1,2}(X,y^a)$ with nonzero trace.
\end{corollary}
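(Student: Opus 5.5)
The strategy is to transfer Theorem \ref{thm:mainv1} to the extension functional by splitting $U$ into its Poisson-type extension plus a trace-free part, exactly as in Proposition \ref{Propositio:Stabilitytrace}. Given $U\in W^{1,2}(X,y^a)$ with trace $u=TU\neq 0$, write $U=U_u+U_0$. Here $U_u$ is the unique solution of $-\operatorname{div}(y^a\nabla U_u)=0$ in $X$ with trace $u$, obtained from the Chang--Gonz\'alez characterization, and $TU_0=0$. Integrating by parts, the cross term $\int_X y^a\langle\nabla U_u,\nabla U_0\rangle\,dv_{\bar g^*}$ vanishes, because $U_0$ has zero trace and $U_u$ is weighted-harmonic. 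Using the Dirichlet-to-Neumann identity $d_\gamma^*\int_X y^a|\nabla U_u|^2+\int_M Q_\gamma^{\hat h}u^2=\int_M uP_\gamma^{\hat h}u$, this gives
\begin{align*}
I_\gamma^*[U]=I_\gamma[u]+\frac{d_\gamma^*\int_X y^a|\nabla U_0|^2\,dv_{\bar g^*}}{\|u\|_{L^{2^*}}^2}.
\end{align*}
It follows that the minimizers of $I_\gamma^*$ are exactly $\mathcal M^*=\{U_v: v\in\mathcal M\}$, and that the deficit splits as
\begin{align*}
I_\gamma^*[U]-\Lambda_\gamma=\bigl(I_\gamma[u]-\Lambda_\gamma\bigr)+d_\gamma^*\frac{\|\nabla U_0\|^2_{L^2(y^a)}}{\|u\|_{L^{2^*}}^2}.
\end{align*}

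Next I normalize. By zero-homogeneity I may assume $\|u\|_{L^{2^*}}=1$. I apply Theorem \ref{thm:mainv1} to $u$; if $u$ changes sign, I use $|u|$, since $I_\gamma[|u|]\le I_\gamma[u]$ by the positivity-improving/Kato property of the extension energy. This gives $I_\gamma[u]-\Lambda_\gamma\ge C\,d(u,\mathcal M)^{2+\theta}$. To compare norms, I use two facts. First, $v\mapsto U_v$ is bounded from $H^\gamma(M)$ to $W^{1,2}(X,y^a)$. Second, by the weighted trace inequality and Poincaré with trace term, $\|U_0\|_{W^{1,2}(X,y^a)}\le C\|\nabla U_0\|_{L^2(y^a)}$, since $U_0$ has zero trace. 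Hence
\begin{align*}
\inf_{V\in\mathcal M^*}\|U-V\|_{W^{1,2}}\le C\inf_{v\in\mathcal M}\|u-v\|_{H^\gamma}+C\|\nabla U_0\|_{L^2(y^a)}.
\end{align*}
For the denominator, the trace theorem gives $\|u\|_{H^\gamma}\le C\|U\|_{W^{1,2}}$, so $d(u,\mathcal M)\ge c\,\inf_v\|u-v\|_{H^\gamma}/\|U\|_{W^{1,2}}$.

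The main obstacle is matching the exponent and normalization for the trace-free term, which enters the deficit quadratically rather than with power $2+\theta$. I handle it in two cases. If $\|\nabla U_0\|/\|U\|_{W^{1,2}}\le 1$, then quadratic control dominates the $(2+\theta)$-th power. I also use $\|u\|_{L^{2^*}}=1$ and $\|U\|_{W^{1,2}}\ge c$ (Sobolev trace) to get
\begin{align*}
\|\nabla U_0\|^2\ge c\bigl(\|\nabla U_0\|/\|U\|\bigr)^{2+\theta}.
\end{align*}
If instead $\|\nabla U_0\|/\|U\|_{W^{1,2}}$ is large, the deficit is bounded below by a positive constant while $d(U,\mathcal M^*)\le 1$ (take $V=0$ in the limit, or any bounded comparison), so the estimate holds trivially after shrinking $C$. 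Finally, I combine the two pieces with $(a+b)^{2+\theta}\le C(a^{2+\theta}+b^{2+\theta})$. One degenerate situation needs care: $\|U\|_{W^{1,2}}$ much larger than $\|u\|_{H^\gamma}$, caused by a huge $U_0$. This falls into the large-deficit case, so the argument also handles it.
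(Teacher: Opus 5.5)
Your proposal follows the paper's proof essentially step for step. Both use the splitting $U=U_u+U_0$ with vanishing cross term, the identity $I_\gamma^*[U]=I_\gamma[u]+d_\gamma^*\|\nabla U_0\|_{L^2(y^a)}^2/\|u\|_{L^{2^*}}^2$, Theorem~\ref{thm:mainv1} applied to the trace, Poincaré for $U_0$, continuity of trace and extension, and a final comparison of the exponents $2$ and $2+\theta$. The paper packages that last step as $d(U,\mathcal M^*)\le C(a+b)/(1+b)$ with $a\le 1$; you instead normalize $\|u\|_{L^{2^*}}=1$ and split into cases. Your second case is in fact empty: orthogonality gives $\|\nabla U_0\|_{L^2(y^a)}\le\|\nabla U\|_{L^2(y^a)}\le\|U\|_{W^{1,2}}$, which is the same fact behind the paper's bound.

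The one step that does not work is your treatment of sign-changing traces. The inequality $I_\gamma[|u|]\le I_\gamma[u]$ only gives $I_\gamma[u]-\Lambda_\gamma\ge C\,d(|u|,\mathcal M)^{2+\theta}$, and $d(|u|,\mathcal M)$ can be much smaller than $d(u,\mathcal M)$. For example, take $u=-v$ with $v\in\mathcal M$. Then $d(|u|,\mathcal M)=0$ while $d(u,\mathcal M)>0$, since $\mathcal M$ consists of positive functions, and the deficit is zero. So the argument has to be restricted to nonnegative traces, which is what the paper does tacitly by invoking Theorem~\ref{thm:mainv1}, stated only on $H^\gamma(M,\mathbb R_+)$. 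With that restriction your proof is complete.
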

\begin{proof}
    Let $U \in W^{1,2}(X,y^a)$, then we decompose it as $U = U_w + U_0$, where $T(U_w)=w $, for some $w \in H^\gamma (M)$, is the weighted harmonic extension and $TU_0 = 0$. We estimate
    \begin{align*}
        I_\gamma^* [U,\bar{g}^*] = I_\gamma^* [U_w + U_0 ,\bar{g}^*] &= \frac{d_\gamma^*\int_X y^a |\nabla U_w + \nabla U_0|^2 dv_{\bar{g}^*} + \int_M Q_\gamma^{\hat{h}} w^2 d\sigma_{\hat{h}} }{\left(\int_M |w|^{2^*} d\sigma_{\hat{h}} \right)^{\frac{2}{2^*}}}  \\
        &= I_\gamma^* [U_w, \bar{g}^*] + d_\gamma^*  \frac{2\int_X y^a \langle \nabla U_w , \nabla U_0  \rangle dv_{\bar{g}^*} + \int_X y^a |\nabla U_0|^2 dv_{\bar{g}^*}  }{\left(\int_M |w|^{2^*} d\sigma_{\hat{h}} \right)^{\frac{2}{2^*}}} \\
        &= I_\gamma [w,\hat{h}] + \frac{d_\gamma^* \int_X y^a |\nabla U_0|^2 dv_{\bar{g}^*}  }{\left(\int_M |w|^{2^*} d\sigma_{\hat{h}} \right)^{\frac{2}{2^*}}} ,
    \end{align*}
    where we used that $\nabla U_w$ and $\nabla U_0$ are orthogonal, see the proof of Proposition \ref{Propositio:Stabilitytrace}. Hence, when estimating the deficit we apply Theorem \ref{thm:mainv1}
    \begin{align*}
        I_\gamma^* [U,\bar{g}^*] - \Lambda_\gamma (M,[\hat{h}]) &= I_\gamma [w,\hat{h}] + \frac{d_\gamma^* \int_X y^a |\nabla U_0|^2 dv_{\bar{g}^*}  }{\left(\int_M |w|^{2^*} d\sigma_{\hat{h}} \right)^{\frac{2}{2^*}}} -\Lambda_\gamma (M,[\hat{h}])\\
        \geq& C \left( \frac{\inf\{ \|w - u\|_{H^\gamma (M)} : u \in \mathcal{M}  \} }{\|w\|_{H^\gamma (M)} }  \right)^{2+\theta}
        + \frac{d_\gamma^* \int_X y^a |\nabla U_0|^2 dv_{\bar{g}^*}  }{\left(\int_M |w|^{2^*} d\sigma_{\hat{h}} \right)^{\frac{2}{2^*}}} \\
        \geq& C\left( \frac{\inf\{ \|w - u\|_{H^\gamma (M)} : u \in \mathcal{M}  \} }{\|w\|_{H^\gamma (M)} }  \right)^{2+\theta}
        + \frac{d_\gamma^* \int_X y^a |\nabla U_0|^2 dv_{\bar{g}^*}  }{\|w\|_{H^\gamma (M)}^2} ,
    \end{align*}
    where we applied the Sobolev embedding. Now, noting that $U_0$ has zero trace, we derive that it satisfies a Poincaré inequality for the weighted spaces. With this we bound
    \begin{align*}
        I_\gamma^* [U,\bar{g}^*] - \Lambda_\gamma (M,[\hat{h}]) &\geq C\left( \frac{\inf\{ \|w - u\|_{H^\gamma (M)} : u \in \mathcal{M}  \} }{\|w\|_{H^\gamma (M)} }  \right)^{2+\theta}
        + C_2 \frac{ \|U_0\|_{W^{1,2}(X,y^a)}^2 }{\|w\|_{H^\gamma (M)}^2} .
    \end{align*}
    Using the continuity of the trace and harmonic extension, we obtain
\begin{align*}
    \|w\|_{H^\gamma(M)}
    &\leq
    C\|U\|_{W^{1,2}(X,y^a)},\\
    \|U_0\|_{W^{1,2}(X,y^a)}
    &=
    \|U-U_w\|_{W^{1,2}(X,y^a)}
    \leq
    C\|U\|_{W^{1,2}(X,y^a)}.
\end{align*}
Moreover, by the continuity of the harmonic extension operator,
\begin{align*}
    \inf\{\|U-U_*\|_{W^{1,2}(X,y^a)}:U_*\in\mathcal M^*\}
    \leq
    C\inf\{\|w-u_*\|_{H^\gamma(M)}:u_*\in\mathcal M\}
    +
    \|U_0\|_{W^{1,2}(X,y^a)}.
\end{align*}
Hence, setting
\begin{align*}
    a:=
    \frac{\inf\{\|w-u_*\|_{H^\gamma(M)}:u_*\in\mathcal M\}}
    {\|w\|_{H^\gamma(M)}},
    \qquad
    b:=
    \frac{\|U_0\|_{W^{1,2}(X,y^a)}}
    {\|w\|_{H^\gamma(M)}},
\end{align*}
we deduce
\begin{align*}
    d(U,\mathcal M^*)
    \leq
    C\frac{a+b}{1+b}.
\end{align*}
Finally, since $\mathcal M$ is invariant under multiplication by positive constants, we have $a\leq 1$. Therefore,
\begin{align*}
    d(U,\mathcal M^*)^{2+\theta}
    \leq
    C(a^{2+\theta}+b^2),
\end{align*}
and the conclusion follows from the previous estimate.
\end{proof}

\subsection{Proof of Corollary \ref{Cor:conformal}} As expected, Corollary \ref{Cor:conformal} is a direct consequence of Theorem \ref{thm:mainv1}. We omit most of the calculations since they are completely analogous to the ones done for Corollary 1.2 in \cite{Neumayer}. Generally speaking, the proof relies heavily on the conformal invariance of the, in this case, pseudo-differential operator $P_\gamma$, see (\ref{eq:GJMSconformal}).

\begin{proof}[Proof of Corollary \ref{Cor:conformal}] To show (\ref{eq:conformal1}) we apply directly Theorem \ref{thm:mainv1} and the fractional Sobolev inequality on $(M,[\hat{h}])$. Plugging $\tilde{h} = u^{4/(n-2\gamma)} \hat{h}$ we obtain
    \begin{align*}
        \mathcal{I}_\gamma (\tilde{h}) - \Lambda_\gamma (M,[\hat{h}]) = I_\gamma [u,\hat{h}] - \Lambda_\gamma (M,[\hat{h}]) \geq& C \left( \frac{\inf\{ \|u - w\|_{H^\gamma (M)} : w \in \mathcal{M}  \}  }{\|u\|_{H^\gamma (M)}}\right)^{2+\theta}\\
        \geq& C \left( \frac{\inf\{ \|u - w\|_{L^{2^*} (M)} : w \in \mathcal{M}  \}  }{\|u\|_{L^{2^*} (M)}}\right)^{2+\theta}\\
        =& C\left( \frac{\inf\{ \|\tilde{h} - h\| : h \in \mathcal{M}  \}  }{\operatorname{vol}_{\tilde{h}} (M)^{1/2^*}  }\right)^{2+\theta} \, .
    \end{align*}
To see the second inequality above we split into when $u$ is close to the set of minimizers, and when it's not. If $d(u,\mathcal{M}) \leq \delta$, both denominators are comparable and the estimate holds. On the other hand, if $d(u,\mathcal{M}) > \delta$ we note that $\inf_{w\in \mathcal{M}} \|u-w\|_{L^{2^*} (M)}/ \|u\|_{L^{2^*} (M)} \leq 1$, so the estimate follows from choosing $C$ small enough.

Now we show the estimate (\ref{eq:conformal2}). Indeed, since $\Lambda_\gamma (M,[\hat{h}]) >0$, the quadratic form associated to $P_\gamma^{\hat{h}}$ is positive definite and, by ellipticity, the induced norm is equivalent to the original one in $H^\gamma (M)$. To conclude, it's enough to show that $\|-\|_*$ is independent of the choice of conformal representative. Let $h,\hat{h} \in \mathcal{M}_1$, with $h = \phi^{4/(n-2\gamma)}\hat{h}$. Then, $h_u = u^{4/(n-2\gamma)} h = (\phi u)^{4/(n-2\gamma)} \hat{h} = \hat{h}_{\phi u}$, and the same for $h_v = \hat{h}_{\phi v} $. We have
\begin{align*}
    \|h_u - h_v\|_*^2 =& \int_M (u-v) P_\gamma^{h} (u-v) d\sigma_h\\
                      =& \int_M (u-v) P_\gamma [\phi^{\frac{4}{n-2\gamma}} \hat{h} ](u-v) \phi^{\frac{2n}{n-2\gamma}} d\sigma_{\hat{h}}\\
                      =& \int_M (u-v) \phi^{-\frac{n+2\gamma}{n-2\gamma}} P_\gamma^{\hat{h}} (\phi(u-v)) \phi^{\frac{2n}{n-2\gamma}} d\sigma_{\hat{h}}\\
                      =& \int_M (\phi u - \phi v)  P_\gamma^{\hat{h}} (\phi u - \phi v) d\sigma_{\hat{h}}\\
                      =& \|\hat{h}_{\phi u} - \hat{h}_{\phi v}\|_*^2\, ,
\end{align*}
where we used the conformal covariance of $P_\gamma^{\hat{h}}$, see (\ref{eq:GJMSconformal}). Hence, $\|-\|_*$ is independent of conformal choice and the result is proved.
\end{proof}

\subsection{Proof of Proposition \ref{Proposition:AS}} Although the fractional Yamabe problem is governed by a nonlocal operator, once the Lyapunov--Schmidt reduction has been established the remaining analysis becomes entirely finite dimensional. Indeed, the reduced functional $q$ is a real analytic function on the finite-dimensional kernel $K$, and all the notions of integrability, order of integrability and the Adams--Simon positivity condition are encoded in its Taylor expansion. Consequently, the proof below follows the same finite-dimensional argument as in the classical Yamabe problem, the only difference lying in the construction of the Lyapunov--Schmidt reduction.

The notion of integrability first appeared in the study of the classical Yamabe flow and has since played an important role in the analysis of degenerate critical points. To the best of our knowledge, it has not been explicitly introduced in the fractional Yamabe setting. Nevertheless, once the Lyapunov--Schmidt reduction is available, the definition and its consequences are completely analogous to the classical case, since they depend only on the finite-dimensional reduced functional.

We now introduce the setting. Let $u_0 \in \mathcal{M}_1$ be a nonintegrable minimizer. Let $q:\mathcal{U} \to \mathbb{R}$, where $\mathcal{U} \subset K$, be the function defined in Lemma \ref{Lemma:LS}. Since it is analytic we can expand in power series
\begin{align*}
    q(x) = q(0) + \sum_{j\geq p} q_j (x),
\end{align*}
where each $q_j$ is a degree $j$ homogeneous polynomial and $p$ is the least integer so that $q_p  \neq 0$. We call this $p$ the \emph{order of integrability} of $u_0$. We note that there must be that $p\geq 3$, since on $K$, $\nabla q (0) = \nabla^2 q(0) = 0$.

Next, we define the Adams-Simon positivity condition.
\begin{definition}[$\text{AS}_p$ condition]\label{def:AS}
    We say $u_0$ satisfies the Adams-Simon positivity condition of order $p$, $\text{AS}_p$ in short, if $p$ is the order of integrability of $u_0$ and $q_p|_{\mathbb{S}^{l-1}}$ attains a positive maximum for some $v \in \mathbb{S}^{l-1}$. Here $\mathbb{S}^{l-1}$ represents the unit sphere with respect to the $L^{2}(M)$ norm in $K \subset H^\gamma (M)$.
\end{definition}

We now finish with the proof of Proposition \ref{Proposition:AS}, which is an adaptation of the one for Proposition 4.3 in \cite{Neumayer}.
\begin{proof}[Proof of Proposition \ref{Proposition:AS}] Let $v \in \mathbb{S}^{l-1} $ be the maximum of $q_p$. For small $t$ we define $u_t = u_0 + tv + F(tv) $, where $F$ is the map defined in Lemma \ref{Lemma:LS}. Using the properties of $F$ and the equivalence of norms in $K$ we have
\begin{align}\label{eq:PropAS}
    \|u_t - u_0\|_{H^\gamma (M)} \sim t.
\end{align}
By definition of $q$ we have
\begin{align*}
    I_\gamma [u_t] - I_\gamma [u_0] = q(tv) - q(0) = \sum_{j\geq p} t^j q_j (v) = t^p q_p (v) + O(t^{p+1}).
\end{align*}
Since $u_0 $ satisfies $\text{AS}_p$ we conclude
\begin{align*}
    |I_\gamma [u_t] - I_\gamma [u_0]| \leq Ct^p q_p (v),
\end{align*}
for $t$ sufficiently small. This and the estimate (\ref{eq:PropAS}) yield the desired conclusion.
\end{proof}

\appendix

\section{Proof of Lemma \ref{Lemma:LS}}\label{Appendix:A}

We follow the proof of the classical Yamabe Lyapunov--Schmidt reduction done in \cite{Neumayer}. The main difference is that now the linearized operator is a nonlocal elliptic operator on a closed manifold.

\begin{proof}[Proof of Lemma \ref{Lemma:LS}]
Fix $u \in \mathcal{M}_1$, and let $K$ and $K^\bot$ be as in Lemma \ref{Lemma:LS}. We divide the proof into steps.
\medskip

\noindent
\emph{Step 1}: Define the map $F$. Define the map $\mathcal{N}: C^{2\gamma +\alpha}(M) \cap \mathcal{B} \to C^{\alpha}(M) \cap T_u \mathcal{B}$ by
\[
\mathcal N(w)
=
\pi_K(w-u)
+
\pi_{K^\perp}\nabla_{\mathcal B}I_\gamma (w),
\]
Then, clearly $\mathcal{N}(u) = 0$. If we consider a smooth curve $(w_t)_{t\in (-\delta , \delta)} \subset C^{2\gamma +\alpha}(M) \cap \mathcal B$ with $w_0 = u$ and $\partial_t |_{t=0} w_t = \xi$, then
\begin{align*}
    \nabla_{\mathcal{B}} \mathcal{N} (u)[\xi] = \frac{d}{dt}|_{t=0} \, \mathcal{N} (w_t ) = \pi_K \xi + \pi_{K^\perp}
\nabla_{\mathcal B}^2I_\gamma (u)[\xi,-].
\end{align*}
Since the Hessian is self-adjoint, its image is contained in $K^\perp $. Indeed, if $\phi \in K$, then
\begin{align*}
    \left\langle
\nabla_{\mathcal B}^2I_\gamma(u)[\xi,-],
\phi
\right\rangle
=
\nabla_{\mathcal B}^2 I_\gamma (u)[\xi,\phi]
=
\nabla_{\mathcal B}^2 I_\gamma(u)[\phi,\xi]
=
0.
\end{align*}
Thus
\begin{align*}
    \nabla_{\mathcal{B}} \mathcal{N} (u)[\xi] = \pi_K \xi + \nabla_{\mathcal B}^2 I_\gamma (u)[\xi,-].
\end{align*}
We claim that $\nabla_{\mathcal{B}} \mathcal{N} (u)$ is an isomorphism. Indeed, if $\nabla_{\mathcal{B}} \mathcal{N} (u)[\xi] = 0$ then
\begin{align*}
    \pi_K\xi=0\, ,
\end{align*}
and
\begin{align*}
    \nabla_{\mathcal B}^2I_\gamma(u)[\xi,-]=0.
\end{align*}
Hence, $\xi\in K^\perp\cap K$ so $\xi =0$. Surjectivity follows from the Fredholm alternative. Indeed, $\nabla_{\mathcal B}^2I_\gamma(u)\big|_{K^\perp} : K^\perp\to K^\perp$ is a Fredholm operator of index zero and has trivial kernel. Hence, it is an isomorphism. Therefore, by the inverse function theorem, $\mathcal{N}$ has a local inverse $\mathcal{N}^{-1}$ defined on a neighborhood $\widehat{\mathcal{U}}$ of $0$ in $C^{\alpha}(M) \cap T_u \mathcal{B}$. Set $\mathcal{U} := K\cap \widehat{\mathcal{U}} \subset K$ and define $F:\mathcal{U} \to K^\perp$
\begin{align}\label{eq:LS_F}
    F(\phi) = \pi_{K^\perp} \left(\mathcal N^{-1}(\phi) - u \right).
\end{align}

\medskip

\noindent
\emph{Step 2: Basic observations of the map $F$.} We now prove some basic identities of $F$. For every $\phi \in \mathcal{U}$, using the definition of \(\mathcal N\), we have
\begin{align}\label{eq:LS_Phi}
    \phi = \mathcal N(\mathcal N^{-1}(\phi)) = \pi_K(\mathcal N^{-1}(\phi) - u) +
\pi_{K^\perp} \nabla_{\mathcal B}I_\gamma  (\mathcal N^{-1}(\phi)).
\end{align}
Taking $\pi_K$ to equation (\ref{eq:LS_Phi}) gives
\begin{align*}
    \phi = \pi_K(\mathcal N^{-1}(\phi) - u).
\end{align*}
Together with the definition of $F$, this implies
\begin{align}\label{eq:LS_invN}
    \mathcal N^{-1}(\phi) = u + \phi + F(\phi).
\end{align}
Taking instead $\pi_{K^\perp}$ to equation (\ref{eq:LS_Phi}) yields
\begin{align}\label{eq:LS_perpe}
    \pi_{K^\perp} \nabla_{\mathcal B}I_\gamma (u + \phi + F(\phi)) = 0.
\end{align}
Therefore
\begin{align*}
    \nabla_{\mathcal B}I_\gamma
(u + \phi + F(\phi)) = \pi_K
\nabla_{\mathcal B}I_\gamma (u + \phi + F(\phi)).
\end{align*}

Next, differentiating equation (\ref{eq:LS_invN}) in the direction $\psi\in K$, we obtain
\begin{align*}
    \nabla \mathcal N^{-1}(\phi)[\psi] = \psi + \nabla F(\phi)[\psi].
\end{align*}
Thus
\begin{align}\label{eq:LS_nablaF}
    \pi_K \nabla \mathcal{N}^{-1}(\phi)[\psi] = \psi,
\qquad
\pi_{K^\perp} \nabla \mathcal{N}^{-1}(\phi)[\psi]
=
\nabla F(\phi)[\psi].
\end{align}
Since $\mathcal N(u)=0$ and $\nabla \mathcal{N} (u)|_K=\operatorname{Id}_K$, using the definition of $F$ (see (\ref{eq:LS_F})), we get $F(0)=0$ and $\nabla F(0)=0$.

\medskip
\noindent
\emph{Step 3: Check properties of $F$.} We start checking property (\ref{LS1}). Indeed, by the construction above (see (\ref{eq:LS_invN})), for all $\phi \in \mathcal{U}$
\begin{align*}
    u + \phi + F(\phi) = \mathcal{N}^{-1}(\phi) \in \mathcal{B}.
\end{align*}
Hence
\begin{align*}
    \mathcal{S} = \left\{ u + \phi + F(\phi): \phi\in \mathcal{U} \right\} \subset \mathcal{B}.
\end{align*}
Moreover, by (\ref{eq:LS_perpe})
\begin{align*}
    \nabla_{\mathcal B}I_\gamma
(u + \phi + F(\phi)) = \pi_K \nabla_{\mathcal B}I_\gamma (u + \phi + F(\phi)).
\end{align*}
Now define $q:\mathcal{U} \to \mathbb{R}$ by $q(\phi) := I_\gamma (u + \phi + F(\phi))$. For every $\psi \in K$, by the chain rule, we have
\begin{align*}
    \frac{d}{dt} \bigg|_{t=0} q( \phi + t\psi) =
\nabla_{\mathcal{B}} I_\gamma (u + \phi + F(\phi))
[
\psi + \nabla F(\phi)[\psi]
].
\end{align*}
Since $\nabla F(\phi)[\psi]\in K^\perp$ and $\nabla_{\mathcal B} I_\gamma
(u + \phi + F(\phi))\in K$, the $K^\perp$-component vanishes, and therefore
\begin{align*}
    \frac{d}{dt}\bigg|_{t=0} q(\phi + t\psi) =
\nabla_{\mathcal{B}} I_\gamma (u + \phi + F(\phi))[\psi].
\end{align*}
Thus
\begin{align*}
    \nabla q(\phi) = \pi_K \nabla_{\mathcal B} I_\gamma
(u + \phi + F(\phi)).
\end{align*}
This proves all the remaining assertions in (\ref{LS1}), except for the analyticity of the reduced functional. For this, we argue as in \cite[Lemma 4.3]{ChanFlow}. After shrinking the neighborhood if necessary, all functions under consideration remain strictly positive. Hence, the nonlinear maps appearing in $\mathcal{N}$ are real analytic in $C^{2\gamma+\alpha}(M)$. Since $P_\gamma^{\hat h}$ is linear, the map $\mathcal{N}$ is real analytic. Therefore, the analytic inverse function theorem implies that $\mathcal{N}^{-1}$, and consequently $F$, are real analytic. It follows that the reduced functional
\begin{align*}
    q(\phi)=I_\gamma(u+\phi+F(\phi))
\end{align*}
is real analytic, completing the proof of (\ref{LS1}).

\medskip

Now we show property (\ref{LS2}). Since \(K\) is finite-dimensional, all norms are equivalent on \(K\). Thus, there exists $\varepsilon >0 $ such that
\begin{align*}
    \{ \phi \in K: \|\phi\|_{H^\gamma (M)} < \varepsilon\} \subset \mathcal{U}.
\end{align*}
Because the projection $\pi_K$ is bounded, we may choose $0< \delta \leq \varepsilon$ such that $\|w - u\|_{H^\gamma (M)} < \delta$ implies $\pi_K (w-u) \in \mathcal{U}$. Let now $w \in \mathcal{B}(u , \delta) $ be a critical point, so we have $\nabla_{\mathcal{B}} I_\gamma (w)=0$. Therefore
\begin{align*}
    \mathcal{N} (w) = \pi_K (w - u).
\end{align*}
Since $w$ lies in the neighborhood on which $\mathcal{N}$ is invertible, we get
\begin{align*}
    w = \mathcal{N}^{-1}(\mathcal{N}(w)) = \mathcal{N}^{-1}(\pi_K (w - u)).
\end{align*}
Using the identity (\ref{eq:LS_invN}) for $\mathcal{N}^{-1}$ yields
\begin{align*}
    w = u + \pi_K (w - u) + F(\pi_K (w - u)).
\end{align*}
This completely proves property (\ref{LS2}).

\medskip

We prove property (\ref{LS3}), i.e. the estimate for \(\nabla F\). This part is formally the same as in the closed Yamabe case, but the elliptic estimate used is not the classical Schauder estimate for a uniformly elliptic equation. Instead, one uses the fractional Schauder estimates, see \cite{NirembergProb,GonzPS}.
Let $\psi \in K$, by Schauder estimates, see \cite[Lemma 5.2]{GonzPS}
\begin{align*}
    \|\nabla F (\phi)[\psi] \|_{C^{2\gamma + \alpha}(M)} \leq C \| \nabla_{\mathcal{B}}^2 I_\gamma [u] [\nabla F (\phi)[\psi]]  \|_{C^{\alpha}(M)},
\end{align*}
where we used the fact $\nabla F (\phi)[\psi] \in K^\perp$ to drop the lower order term. Next, using (\ref{eq:LS_nablaF})
\begin{align*}
    \nabla_{\mathcal{B}}^2 I_\gamma [u] [\nabla F (\phi)[\psi]]  &= \nabla_{\mathcal{B}}^2 I_\gamma [u] [\pi_{K^\perp} \nabla \mathcal{N}^{-1} (\phi)[\psi]] \\
    &= \pi_{K^\perp} \nabla_{\mathcal{B}}^2 I_\gamma [u] [ \nabla \mathcal{N}^{-1} (\phi)[\psi]] \, ,
\end{align*}
where the second equality follows from $\nabla_{\mathcal{B}}^2 I_\gamma [u](\phi) \in K^\perp$ for all $\phi \in H^\gamma (M)$. So
\begin{align*}
    \|\nabla F (\phi)[\psi] \|_{C^{2\gamma +\alpha}(M)} \leq C \| \pi_{K^\perp} \nabla_{\mathcal{B}}^2 I_\gamma [u] [ \nabla \mathcal{N}^{-1} (\phi)[\psi]] \|_{C^{\alpha}(M)}.
\end{align*}
Next, we claim that
\begin{align}\label{eq:LS_claim}
    \| \pi_{K^\perp} \nabla_{\mathcal{B}}^2 I_\gamma [u] [ \nabla \mathcal{N}^{-1} (\phi)[\psi]] \|_{C^{\alpha}(M)} \leq \varepsilon \|\nabla \mathcal{N}^{-1} (\phi)[\psi]\|_{C^{2\gamma +\alpha}(M)}.
\end{align}
Using that $\phi \in \mathcal{U}$ has the decomposition (\ref{eq:LS_Phi})
\begin{align*}
    \phi = \pi_K (\mathcal{N}^{-1}(\phi) - u) + \pi_{K^\perp} \nabla_{\mathcal{B}} I_\gamma (\mathcal{N}^{-1} (\phi)),
\end{align*}
and differentiating in the direction of $\psi \in K$
\begin{align*}
    \psi = \pi_K (\nabla \mathcal{N}^{-1}(\phi) [\psi] ) + \pi_{K^\perp} \nabla_{\mathcal{B}}^2 I_\gamma (\mathcal{N}^{-1} (\phi))[\nabla \mathcal{N}^{-1} (\phi)[\psi]].
\end{align*}
Take $\pi_{K^\perp}$
\begin{align*}
    0 = \pi_{K^\perp} \nabla_{\mathcal{B}}^2 I_\gamma (\mathcal{N}^{-1} (\phi))[\nabla \mathcal{N}^{-1} (\phi)[\psi]].
\end{align*}
Using this in the left-hand side of (\ref{eq:LS_claim})
\begin{align*}
    \| \pi_{K^\perp} \nabla_{\mathcal{B}}^2 I_\gamma [u] [ \nabla \mathcal{N}^{-1} (\phi)[\psi]] \|_{C^{\alpha}(M)} =& \| \pi_{K^\perp} \left(\nabla_{\mathcal{B}}^2 I_\gamma [u] - \nabla_{\mathcal{B}}^2 I_\gamma [\mathcal{N}^{-1}(\phi)] \right)[ \nabla \mathcal{N}^{-1} (\phi)[\psi]] \|_{C^\alpha (M)} \\
    \leq&C \|\nabla_{\mathcal{B}}^2 I_\gamma [u] - \nabla_{\mathcal{B}}^2 I_\gamma [\mathcal{N}^{-1}(\phi)] \|_{ L(C^{2\gamma + \alpha},C^\alpha ) } \| \nabla \mathcal{N}^{-1} (\phi)[\psi] \|_{C^{2\gamma + \alpha}(M)}\\
    \leq& \omega (\|u - \mathcal{N}^{-1}(\phi) \|_{C^{2\gamma + \alpha}(M)}) \| \nabla \mathcal{N}^{-1} (\phi)[\psi] \|_{C^{2\gamma + \alpha}(M)}\\
    \leq& \omega (\| \phi \|_{C^{ \alpha}(M)}) \| \nabla \mathcal{N}^{-1} (\phi)[\psi] \|_{C^{2\gamma + \alpha}(M)}\\
    \leq&  \tilde{\omega} (\| \phi \|_{H^\gamma (M)}) \| \nabla \mathcal{N}^{-1} (\phi)[\psi] \|_{C^{2\gamma + \alpha}(M)},
\end{align*}
where the second inequality comes from the continuity of the Hessian, the third one from the continuity of $\mathcal{N}^{-1}$ from $C^{ \alpha}(M)$ to $C^{2\gamma + \alpha}(M)$, and the last one from the equivalence of norms in $K$.

With the claim (\ref{eq:LS_claim}) proved, we use $\nabla \mathcal{N}^{-1} (\phi)[\psi] = \psi + \nabla F(\phi)[\psi]$, to get
\begin{align*}
    \|\nabla F(\phi)[\psi]\|_{C^{2\gamma + \alpha}(M)} \leq \varepsilon (\|\psi\|_{C^{\alpha}(M)} + \|\nabla F(\phi)[\psi]\|_{C^{2\gamma + \alpha}(M)}).
\end{align*}
Absorbing the second term, we get the desired conclusion (\ref{LS3}).

\end{proof}

\section{Compactness of minimizing sequences} \label{appendix:B}
In this appendix we show with some level of detail that every normalized minimizing sequence for the extension functional converges strongly, up to subsequence, to a unit norm minimizer of the problem. Using the trace operator, we can translate this into compactness of minimizing sequences of the non-local functional. This allows us to turn the local estimates shown in Proposition \ref{Proposition:localstab} into global estimates, which is our main Theorem \ref{thm:mainv1} of this manuscript. The proof relies on a concentration compactness principle from P. L. Lions adapted to weighted spaces. See \cite{Lions1,Lions2} for the original method and \cite{Valdinocci} for a weighted version. We shall mention also \cite{NeumayerCC,Struwe}.

There are several approaches to proving the compactness of minimizing sequences. While \cite{GonzPS} studies the blow-up behavior of Palais--Smale sequences, our proof is based on the more analytical method concentration--compactness.

To apply the concentration--compactness principle we first need a Sobolev trace inequality for measures, analogous to Lemma 1.2 in \cite{Lions1}. For this purpose we use the following result of Jin and Xiong \cite{JinXiong}.

\begin{proposition}\label{ineq:trace_mdm}
    For any $\varepsilon>0$ there exists $C_\varepsilon >0$ such that
    \begin{align*}
        \left( \int_M |u|^{2^*} d\sigma_{\hat{h}}   \right)^{2/2^*} \leq (\bar{S}(n,\gamma) + \varepsilon) \int_X y^a |\nabla U|^2 dv_{\bar{g}^* } + C_\varepsilon \int_X y^a U^2 dv_{\bar{g}^* } \, .
    \end{align*}
\end{proposition}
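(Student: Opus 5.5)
The plan is a standard localization argument: a partition of unity, the sharp Euclidean weighted trace inequality applied in small boundary charts, and absorption of all error terms into the $L^2(y^a)$ term.

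\textbf{Step 1 (localization near $M$).} Since $\overline X$ is compact and $y$ is the adapted defining function, we have $\bar g^* = dy^2 + h_y$ near $M$, with $h_0=\hat h$. Fix $\varepsilon>0$ and a small radius $r>0$. Cover a collar $M\times[0,r)$ by finitely many half-balls $B_i^+$ of radius $r$, and use Fermi-type coordinates $(x,y)$ in which $\bar g^*$ is $C^0$-close to the Euclidean metric. By this we mean that $|g_{ij}-\delta_{ij}|\le \eta(r)$ with $\eta(r)\to0$, so that $dv_{\bar g^*}$, $d\sigma_{\hat h}$ and $|\nabla U|^2$ differ from their Euclidean counterparts by factors $1+O(\eta)$. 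Choose a partition of unity $\{\varphi_i^2\}$ on a neighborhood of $M$ with $\sum\varphi_i^2=1$ on $M$, each $\varphi_i$ supported in $B_i^+$, and $|\nabla\varphi_i|\le C/r$.

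\textbf{Step 2 (sharp inequality on each piece).} Write $\|u\|_{L^{2^*}}^2=\|\sum_i\varphi_i^2u^2\|_{L^{2^*/2}}$. Since $2^*/2\ge1$, Minkowski's inequality gives $\|u\|_{L^{2^*}}^2\le\sum_i\|\varphi_iu\|_{L^{2^*}}^2$. Each $\varphi_iU$ is compactly supported in a half-ball, so we can transplant it to $\mathbb R^{n+1}_+$ and apply the sharp weighted trace inequality (the theorem of González--Qing above) with constant $\bar S(n,\gamma)$. This yields
\begin{align*}
\|\varphi_i u\|_{L^{2^*}(M)}^2\le(1+C\eta(r))\,\bar S(n,\gamma)\int_X y^a|\nabla(\varphi_iU)|^2\,dv_{\bar g^*}.
\end{align*}

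\textbf{Step 3 (summing and absorbing errors).} Expand $|\nabla(\varphi_iU)|^2=\varphi_i^2|\nabla U|^2+2\varphi_iU\nabla\varphi_i\cdot\nabla U+U^2|\nabla\varphi_i|^2$. Summing over $i$ and using $\sum\varphi_i^2\le1$, the first terms give at most $\int y^a|\nabla U|^2$. The cross terms are handled with Young's inequality: they are bounded by $\tau\int y^a|\nabla U|^2+C_{\tau,r}\int y^aU^2$. The last terms are $\le C r^{-2}\int y^aU^2$. Choosing first $r$ small so that $C\eta(r)$ is small, and then $\tau$ small, gives total constant $\le\bar S+\varepsilon$ in front of the gradient term, with $C_\varepsilon$ collecting everything multiplying $\int y^aU^2$. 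One subtlety is that $\sum\varphi_i^2=1$ only on $M$, not in the interior. This is harmless: we only need it on $M$ for the left-hand side, and $\sum\varphi_i^2\le1$ everywhere for the right-hand side.

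\textbf{Main obstacle.} The delicate point is ensuring that the $y^a$-weighted measure and the metric comparisons in Step 1 hold with multiplicative error $1+o(1)$ uniformly as $r\to0$. This requires the adapted defining function to be a genuine geodesic-type coordinate, so that $y$ coincides with the Euclidean height variable. I would rely on the Chang--González/Graham normal form $g^+=y^{-2}(dy^2+h_y)$, in which $y^a$ is exactly the Euclidean weight in those charts. If only $y=\rho(1+O(\rho^{2\gamma}))$ were available, I would instead absorb the discrepancy $y^a/\rho^a=1+o(1)$ into $\eta(r)$.
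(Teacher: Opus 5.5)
The paper gives no proof of this proposition; it quotes it from Jin and Xiong and uses it as a black box in Lemma~\ref{Lemma:CC}. Your partition-of-unity argument is a correct, self-contained proof, and it is the standard Aubin/Li--Zhu localization. Its ingredients are:
\begin{itemize}
\item Minkowski in $L^{2^*/2}$, which is legitimate since $2^*/2=n/(n-2\gamma)>1$.
\item The sharp half-space inequality on each piece, with a multiplicative loss $1+O(\eta(r))$.
\item Young's inequality, which sends the cross terms and the $|\nabla\varphi_i|^2$ terms into $C_\varepsilon\int_X y^aU^2$.
\end{itemize}

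One correction concerns Step 1. The claim $\bar g^*=dy^2+h_y$ is false in general. The Chang--Gonz\'alez adapted defining function is not the geodesic one: $y=\rho(1+O(\rho^{2\gamma}))$, where $\rho$ is the geodesic defining function of $\hat h$. Since geodesic defining functions with a given boundary metric are unique, $y$ is generally not geodesic for $\bar g^*=y^2g^+$.

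Your fallback is what actually makes the argument work. In the geodesic collar $(x,\rho)$ of $\bar g=\rho^2g^+$ one has $\bar g^*=(y/\rho)^2\bar g$ and $y^a=\rho^a(1+O(\rho^{2\gamma}))$. Alternatively, you can take $(x,y)$ itself as coordinates: $\bar g^*$ is then continuous up to $M$ and equals $dy^2+\hat h$ on $M$. Either way you only need $C^0$-closeness of the metric and weight to the flat model, so all discrepancies can be absorbed into $\eta(r)$.

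Compared with the bare citation, your route shows explicitly what the inequality depends on:
\begin{itemize}
\item only the sharp Euclidean constant $\bar S(n,\gamma)$ and the continuity of $\bar g^*$ and $y/\rho$ up to $M$;
\item the lower-order term can indeed be the interior weighted norm $\int_X y^aU^2$, which is exactly what the concentration--compactness lemma needs.
\end{itemize}
The citation is shorter, but it leaves the reader to check that the setting of Jin--Xiong covers a general asymptotically hyperbolic $X$ equipped with the adapted defining function.
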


We aim to use this almost Sobolev trace inequality to derive one for measures and apply concentration compactness arguments, see for instance Lemma 1.2 in \cite{Lions1}, Lemma 2.3 \cite{Lions2} or Proposition 3.2.1 in \cite{Valdinocci}. In particular, we follow the approach of \cite{Valdinocci}.

\begin{lemma}[Concentration compactness]\label{Lemma:CC}
    Let $(U_k)_{k\in \mathbb{N}}$ be a bounded tight sequence in $W^{1,2}(X,y^a)$ such that $U_k$ converges weakly to $U$ in $W^{1,2}(X,y^a)$. Let $\mu$, $\nu$ be two nonnegative measures on $X$ and $M$ respectively and such that
    \begin{equation*}
        \lim_{k\to \infty} y^a |\nabla U_k|^2 = \mu \, ,
    \end{equation*}
    and
    \begin{align*}
        &\lim_{k\to \infty} |U_k (-,0)|^{2^*} = \nu .
    \end{align*}
    Then, there exists an at most countable set $J$ and three families $\{x_j\}_{j\in J} \subset M, \{\nu_j\}_{j\in J}, \{\mu_j\}_{j\in J} \in \mathbb{R}_+$ such that
    \begin{enumerate}
        \item $\nu = |U(-,0)|^{2^*} + \sum_{j\in J} \nu_j \delta_{x_j}$,
        \item $\mu \geq y^a |\nabla U|^2 + \sum_{j\in J} \mu_j \delta_{ ( x_j , 0 ) }$,
        \item $\mu_j \geq \bar{S}(n,\gamma)^{-1} \nu_j^{2/2^{*}}$ for all $j\in J$.
    \end{enumerate}
\end{lemma}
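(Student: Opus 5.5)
The plan is to follow the Lions concentration--compactness scheme in the form of \cite{Valdinocci}, with Proposition \ref{ineq:trace_mdm} replacing the sharp Sobolev trace inequality. First write $V_k := U_k - U$, so that $V_k \rightharpoonup 0$ in $W^{1,2}(X,y^a)$. By the compact embedding of $W^{1,2}(X,y^a)$ into $L^2(X,y^a)$ (the weight $y^a$ is $\mathcal A_2$ and $\overline X$ is compact) and the compactness of the trace into $L^q(M)$ for $q<2^*$, we have $V_k\to 0$ in $L^2(X,y^a)$ and $V_k(-,0)\to 0$ a.e. on $M$. Brezis--Lieb then gives $|U_k(-,0)|^{2^*} - |V_k(-,0)|^{2^*} \to |U(-,0)|^{2^*}$ weakly as measures. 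Similarly, $y^a|\nabla U_k|^2 - y^a|\nabla V_k|^2 - y^a|\nabla U|^2 = 2y^a\langle \nabla V_k,\nabla U\rangle \rightharpoonup 0$. Passing to subsequences, $y^a|\nabla V_k|^2\rightharpoonup \tilde\mu$ and $|V_k(-,0)|^{2^*}\rightharpoonup\tilde\nu$, with $\mu = y^a|\nabla U|^2+\tilde\mu$ and $\nu = |U(-,0)|^{2^*}+\tilde\nu$. This reduces the lemma to the case $U=0$.

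\textbf{Reverse Hölder inequality for the defect measures.} For $\varphi\in C^\infty(\overline X)$, apply Proposition \ref{ineq:trace_mdm} to $\varphi V_k$. Since $V_k\to 0$ in $L^2(X,y^a)$, the terms $C_\varepsilon\int y^a\varphi^2V_k^2$ and the cross terms $\int y^a V_k^2|\nabla\varphi|^2$, $\int y^a \varphi V_k\langle\nabla\varphi,\nabla V_k\rangle$ all vanish in the limit. This yields
\begin{align*}
\Big(\int_M |\varphi|^{2^*}d\tilde\nu\Big)^{2/2^*}\leq (\bar S(n,\gamma)+\varepsilon)\int_{\overline X}\varphi^2 d\tilde\mu ,
\end{align*}
and letting $\varepsilon\to0$ gives the same inequality with constant $\bar S(n,\gamma)$. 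Tightness ensures that no mass of $\tilde\mu$ escapes, so the inequality holds for the limiting measures on the compact space $\overline X$.

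\textbf{Atomic structure.} A standard measure-theoretic lemma (Lions, Lemma 1.2) applies to the trace pair: restrict $\tilde\mu$ to $M\times\{0\}$, or more precisely use test functions $\varphi$ concentrating near a boundary point. The inequality $\tilde\nu(A)^{2/2^*}\le \bar S\,\tilde\mu(\tilde A)$, with exponent $2^*>2$, then forces $\tilde\nu$ to be purely atomic, $\tilde\nu=\sum_j\nu_j\delta_{x_j}$. The set $J$ is countable because $\tilde\nu$ is finite. Testing with $\varphi_\rho$ supported in a ball of radius $\rho$ about $(x_j,0)$ and letting $\rho\to0$ gives $\tilde\mu(\{(x_j,0)\})=:\mu_j\ge \bar S^{-1}\nu_j^{2/2^*}$. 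Since $\tilde\mu\ge\sum_j\mu_j\delta_{(x_j,0)}$, claims (1)--(3) follow.

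\textbf{Main obstacle.} The delicate step is the localization in the weighted, degenerate setting. Proposition \ref{ineq:trace_mdm} has to be applied to $\varphi V_k$ with $\varphi$ a cutoff on $\overline X$ near the boundary, which requires that multiplication by smooth functions preserves $W^{1,2}(X,y^a)$. One also needs the compactness of $W^{1,2}(X,y^a)\hookrightarrow L^2(X,y^a)$ in order to kill the lower-order term. I would verify these two facts using the $\mathcal A_2$ property of $y^a$ in boundary charts where $\bar g^*$ is comparable to the Euclidean metric.
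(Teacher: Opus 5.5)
Your proposal is correct and follows essentially the same route as the paper. You pass to $V_k=U_k-U$, use the compact embedding into $L^2(X,y^a)$ together with Proposition \ref{ineq:trace_mdm} applied to $\varphi V_k$ to obtain the reverse H\"older inequality for the defect measures (then let $\varepsilon\to0$), invoke Lions' lemma for the atomic structure, and recover $\mu,\nu$ via Brezis--Lieb and the vanishing cross term. The only differences are the order in which you split off $U$ (before rather than after the defect inequality), and that you justify a.e.\ convergence on $M$ via the compact trace into $L^q(M)$, $q<2^*$, a step the paper leaves implicit.
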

\begin{proof}
    Let $W_k:=U_k-U \rightharpoonup 0 $ in $W^{1,2}(X,y^a)$. By the compact embedding $W^{1,2}(X,y^a)\hookrightarrow L^2(X,y^a)$ we have $W_k\to 0 $ strongly in $L^2(X,y^a)$. After passing to a subsequence, there exist nonnegative bounded
    Radon measures $\widetilde{\mu}$ on $ X$ and
    $\widetilde{\nu}$ on $M$ such that
    \begin{align}
        y^a|\nabla W_k|^2\,dv_{\bar{g}^*}
        &\rightharpoonup^*
        \widetilde{\mu} \, 
        ,
        \label{eq:measures1}\\
        |W_k(-,0)|^{2^*}\,d\sigma_{\hat{h}}
        &\rightharpoonup^*
        \widetilde{\nu}\, .
        \label{eq:measures2}
    \end{align}

    We first show that the defect measures
    $\widetilde{\mu}$ and $\widetilde{\nu}$ satisfy a Sobolev trace
    inequality to then apply concentration compactness from \cite{Lions1,Lions2}. Let $\varphi\in C^\infty(X)$. Applying Proposition \ref{ineq:trace_mdm} to $\varphi W_k$, we get
    \begin{align*}
        \left(
        \int_M |\varphi W_k|^{2^*}\,d\sigma_{\hat{h}}
        \right)^{2/2^*}
        \leq&
        \big(\bar{S}(n,\gamma)+\varepsilon\big)
        \int_X y^a|\nabla(\varphi W_k)|^2\,dv_{\bar{g}^*}
        +
        C_\varepsilon
        \int_X y^a\varphi^2W_k^2\,dv_{\bar{g}^*}.
    \end{align*}
    Since $W_k\to0$ strongly in $L^2(X,y^a)$
    \begin{align}
        \int_X y^a\varphi^2W_k^2\,dv_{\bar{g}^*}=o(1).
        \label{eq:zeroorderCC}
    \end{align}
    On the other hand
    \begin{align*}
        \int_X y^a|\nabla(\varphi W_k)|^2\,dv_{\bar{g}^*}
        =&
        \int_X y^a\varphi^2|\nabla W_k|^2\,dv_{\bar{g}^*}
        +
        \int_X y^aW_k^2|\nabla\varphi|^2\,dv_{\bar{g}^*}
        +
        2\int_X y^a\varphi W_k
        \langle\nabla\varphi,\nabla W_k\rangle_g\,dv_{\bar{g}^*}.
    \end{align*}
    The second term converges to zero because
    $W_k\to0$ in $L^2(X,y^a)$. For the last term, the
    Cauchy-Schwarz inequality gives
    \begin{align*}
        &\left|
        \int_X y^a\varphi W_k
        \langle\nabla\varphi,\nabla W_k\rangle_g\,dv_{\bar{g}^*}
        \right|
        \leq
        \left(
        \int_X y^aW_k^2|\nabla\varphi|^2\,dv_{\bar{g}^*}
        \right)^{1/2}
        \left(
        \int_X y^a\varphi^2|\nabla W_k|^2\,dv_{\bar{g}^*}
        \right)^{1/2}.
    \end{align*}
    The first factor converges to zero, while the second one is
    uniformly bounded since $(W_k)_k$ is bounded in
    $W^{1,2}(X,y^a)$. Therefore we end up with
    \begin{align}
        \int_X y^a|\nabla(\varphi W_k)|^2\,dv_{\bar{g}^*}
        =
        \int_X y^a\varphi^2|\nabla W_k|^2\,dv_{\bar{g}^*}
        +o(1).
        \label{eq:gradientCC}
    \end{align}
    Combining \eqref{eq:zeroorderCC} and \eqref{eq:gradientCC}, we get
    \begin{align*}
        \left(
        \int_M |\varphi|^{2^*}|W_k|^{2^*}\,d\sigma_{\hat{h}}
        \right)^{2/2^*}
        \leq
        \big(\bar{S}(n,\gamma)+\varepsilon\big)
        \int_X y^a\varphi^2|\nabla W_k|^2\,dv_{\bar{g}^*}
        +o(1).
    \end{align*}
    Passing to the limit using \eqref{eq:measures1} and
    \eqref{eq:measures2}, we obtain
    \begin{align*}
        \left(
        \int_M |\varphi|^{2^*}\,d\widetilde{\nu}
        \right)^{2/2^*}
        \leq
        \big(\bar{S}(n,\gamma)+\varepsilon\big)
        \int_{X}\varphi^2\,d\widetilde{\mu}.
    \end{align*}
    Since this inequality holds for every $\varepsilon>0$, letting
    $\varepsilon\to0$ gives
    \begin{align}
        \left(
        \int_M |\varphi|^{2^*}\,d\widetilde{\nu}
        \right)^{2/2^*}
        \leq
        \bar{S}(n,\gamma)
        \int_{ X}\varphi^2\,d\widetilde{\mu}.
        \label{eq:measuretraceCC}
    \end{align}

    Applying the measure-theoretic concentration-compactness of
    \cite{Lions1,Lions2} to
    $\widetilde{\mu}$ and $\widetilde{\nu}$, there exists an at most countable set
    $J$, a family of points $(x_j)_{j\in J}\subset M$, and families
    $(\nu_j)_{j\in J}$ and $(\mu_j)_{j\in J}$ of positive numbers such
    that
    \begin{align}
        \widetilde{\nu}
        &=
        \sum_{j\in J}\nu_j\delta_{x_j},
        \label{eq:atomicdefectnu}\\
        \widetilde{\mu}
        &\geq
        \sum_{j\in J}\mu_j\delta_{(x_j,0)},
        \label{eq:atomicdefectmu}\\
        \mu_j
        &\geq
        \bar{S}(n,\gamma)^{-1}\nu_j^{2/2^*} 
        \quad \forall  j\in J.
        \label{eq:atomicrelationCC}
    \end{align}

    It remains to recover the measures $\mu$ and $\nu$ associated
    with the original sequence $(U_k)_{k\in \mathbb{N}}$. After passing to a further subsequence, we may assume that
    $U_k\to U$ almost everywhere on $M$. By the Brezis--Lieb lemma,
    for every $\varphi\in C^\infty(X)$,
    \begin{align*}
        \int_M |\varphi U_k|^{2^*}\,d\sigma_{\hat{h}}
        =
        \int_M |\varphi U|^{2^*}\,d\sigma_{\hat{h}}
        +
        \int_M |\varphi W_k|^{2^*}\,d\sigma_{\hat{h}}
        +o(1).
    \end{align*}
    Passing to the limit, we obtain
    \begin{align*}
        \int_M |\varphi|^{2^*}\,d\nu
        =
        \int_M |\varphi|^{2^*}|U|^{2^*}\,d\sigma_{\hat{h}}
        +
        \int_M |\varphi|^{2^*}\,d\widetilde{\nu}.
    \end{align*}
    Consequently,
    \begin{align*}
        \nu
        =
        |U(-,0)|^{2^*}\,d\sigma_{\hat{h}}+\widetilde{\nu}.
    \end{align*}
    Using \eqref{eq:atomicdefectnu}, we conclude that
    \begin{align*}
        \nu
        =
        |U(-,0)|^{2^*}\,d\sigma_{\hat{h}}
        +
        \sum_{j\in J}\nu_j\delta_{x_j}.
    \end{align*}

    Similarly, since $U_k=U+W_k$, for every
    $\varphi\in C^\infty( X)$ we have
    \begin{align*}
        \int_X y^a\varphi^2|\nabla U_k|^2\,dv_{\bar{g}^*}
        =&
        \int_X y^a\varphi^2|\nabla U|^2\,dv_{\bar{g}^*}
        +
        \int_X y^a\varphi^2|\nabla W_k|^2\,dv_{\bar{g}^*}
        +
        2\int_X y^a\varphi^2
        \langle\nabla U,\nabla W_k\rangle_g\,dv_{\bar{g}^*}.
    \end{align*}
    Since $W_k\rightharpoonup 0$ in $W^{1,2}(X,y^a)$, the last term converges to zero. Passing to the limit, we get
    \begin{align*}
        \int_{X}\varphi^2\,d\mu
        =
        \int_X y^a\varphi^2|\nabla U|^2\,dv_{\bar{g}^*}
        +
        \int_{ X}\varphi^2\,d\widetilde{\mu}.
    \end{align*}
    Therefore
    \begin{align*}
        \mu
        =
        y^a|\nabla U|^2\,dv_{\bar{g}^*}+\widetilde{\mu}.
    \end{align*}
    Finally, using \eqref{eq:atomicdefectmu}, we obtain
    \begin{align*}
        \mu
        \geq
        y^a|\nabla U|^2\,dv_{\bar{g}^*}
        +
        \sum_{j\in J}\mu_j\delta_{(x_j,0)}.
    \end{align*}
    Together with \eqref{eq:atomicrelationCC}, this concludes the
    proof.
\end{proof}

\begin{lemma}[Compactness for minimizing sequences for $I_\gamma^*$]\label{lemma:compacntess_ext}
    Let $(M^n , [\hat{h}])$ be the conformal infinity of an asymptotically hyperbolic manifold $(X^{n+1},g^+ )$ such that $\Lambda_{\gamma} (M,[\hat{h}]) < \Lambda_{\gamma} (\mathbb{S}^n ,[h_{0}])$. Let $(U_k)_{k\in \mathbb{N}} $ be a normalized minimizing sequence, i.e. $I_{\gamma}^* [U_k,\bar{g}^*] \to \Lambda_{\gamma} (M,[\hat{h}])$. Then, up to subsequence, $(U_k)_{k\in \mathbb{N}}$ converges strongly in $W^{1,2}(X,y^a)$ to some $U \in \mathcal{M}_1^*$.
\end{lemma}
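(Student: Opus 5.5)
The plan is the standard concentration--compactness argument of Lions, applied through Lemma \ref{Lemma:CC}; the strict inequality $\Lambda_\gamma(M,[\hat h])<\Lambda_\gamma(\mathbb S^n,[h_0])$ is what rules out concentration.

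Normalize $\int_M |U_k|^{2^*}d\sigma_{\hat h}=1$. Since the denominator is fixed, minimizing $I^*_\gamma$ is the same as minimizing the numerator, and the numerator cannot increase if we replace $U_k$ by the weighted harmonic extension of its trace. So we may assume each $U_k$ solves $\operatorname{div}(y^a\nabla U_k)=0$. The decomposition $U=U_w+U_0$ from the proof of the corollary for $I^*_\gamma$ shows that the discarded part satisfies $\int y^a|\nabla U_0|^2\to0$, so this reduction costs nothing.

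\textbf{Step 1: boundedness.} Show that $(U_k)$ is bounded in $W^{1,2}(X,y^a)$. The numerator is bounded, and $\int_M Q_\gamma^{\hat h} u_k^2$ is controlled by $\|u_k\|_{L^2(M)}\le C\|u_k\|_{L^{2^*}(M)}=C$. Hence $\int_X y^a|\nabla U_k|^2$ is bounded. Since $\|U_k\|_{L^2(X,y^a)}$ is controlled by the gradient and the trace (a weighted Poincaré-type inequality), the full norm is bounded. Because $\overline X$ is compact, tightness is automatic. Passing to a subsequence, $U_k\rightharpoonup U$ weakly in $W^{1,2}(X,y^a)$, $u_k\to u$ in $L^2(M)$ by compactness of the trace into $L^2$, and the measures $\mu,\nu$ of Lemma \ref{Lemma:CC} exist.

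\textbf{Step 2: apply Lemma \ref{Lemma:CC}.} Write $\Lambda:=\Lambda_\gamma(M,[\hat h])$ and set $\lambda:=\int_M|u|^{2^*}$. Then $1=\nu(M)=\lambda+\sum_j\nu_j$. Using the lower bound on $\mu$, the strong $L^2$ convergence of the lower-order term, and $d^*_\gamma\mu_j\ge d^*_\gamma\bar S^{-1}\nu_j^{2/2^*}=\Lambda_\gamma(\mathbb S^n)\nu_j^{2/2^*}$ (from $\bar S=d^*_\gamma S$ and $\Lambda_\gamma(\mathbb S^n)=S^{-1}$), we obtain
\[
\Lambda=\lim I^*_\gamma[U_k]\ \ge\ \Lambda\lambda^{2/2^*}+\Lambda_\gamma(\mathbb S^n,[h_0])\sum_j\nu_j^{2/2^*}.
\]
Since $\Lambda_\gamma(\mathbb S^n)>\Lambda$, and since $t\mapsto t^{2/2^*}$ is strictly concave so that $\lambda^{2/2^*}+\sum_j\nu_j^{2/2^*}\ge1$ with equality only if one term equals $1$, the inequality can hold only if every $\nu_j=0$ and $\lambda=1$.

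The sign of $\Lambda$ needs care. If $\Lambda\le0$, the concavity comparison is not directly usable. In that case I would first rule out atoms differently: write the limit of the numerator as $d^*_\gamma\int y^a|\nabla U|^2+\int Q u^2+d^*_\gamma\sum\mu_j\ge\Lambda\lambda^{2/2^*}+\Lambda_\gamma(\mathbb S^n)\sum\nu_j^{2/2^*}$, and use that $\Lambda\lambda^{2/2^*}\ge\Lambda$ when $\Lambda\le0$ and $\lambda\le1$. This forces $\sum\nu_j^{2/2^*}\le0$, hence no atoms and $\lambda=1$. I expect this sign bookkeeping to be the main delicate point.

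\textbf{Step 3: strong convergence.} With no atoms, $u_k\to u$ in $L^{2^*}(M)$, because there is norm convergence together with a.e. convergence. Thus $U$ is admissible and, by weak lower semicontinuity of the Dirichlet term,
\[
I^*_\gamma[U]\le\liminf_k I^*_\gamma[U_k]=\Lambda .
\]
Therefore $U$ is a minimizer, and equality holds throughout. In particular $\int y^a|\nabla U_k|^2\to\int y^a|\nabla U|^2$, which combined with weak convergence and $L^2(X,y^a)$ convergence gives $U_k\to U$ strongly in $W^{1,2}(X,y^a)$. Finally, replacing $U_k$ by $|U_k|$ if necessary (this does not change the energy), $U$ may be taken nonnegative, and by the strong maximum principle for the extension problem it is positive. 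Hence $U\in\mathcal M^*_1$.
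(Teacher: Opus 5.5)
Your proposal is correct and follows essentially the paper's argument: you bound the sequence, apply Lemma~\ref{Lemma:CC}, and exclude atoms using $\Lambda_\gamma(M,[\hat h])<\Lambda_\gamma(\mathbb S^n,[h_0])$ together with strict subadditivity of $s\mapsto s^{2/2^*}$; you then get strong convergence from lower semicontinuity plus convergence of the Dirichlet energies, and positivity from the maximum principle. The deviations are harmless. The preliminary reduction to weighted-harmonic extensions is never used. Handling $\Lambda_\gamma(M,[\hat h])\le 0$ inside the same inequality is a welcome substitute for the paper's appeal to the direct method. Brezis--Lieb replaces uniform convexity in the $L^{2^*}$ step. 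Finally, instead of ``replacing $U_k$ by $|U_k|$'' (which changes the sequence whose convergence is claimed), you should simply use that the $U_k$ are nonnegative, as the paper implicitly does.
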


\begin{proof}
    First, we may assume that $\Lambda_\gamma (M,[\hat{h}]) \geq 0$ since the negative case follows easily from the direct method. Let $(U_k)_{k\in \mathbb{N}} $ be a normalized minimizing sequence, that is
    \begin{align*}
        I_{\gamma}^* [U_k,\bar{g}^*]
        \longrightarrow
        \Lambda_{\gamma} (M,[\hat{h}]).
    \end{align*}
    Since $\|U_k\|_{2^*}=1$ for every $k\in\mathbb{N}$, this means that
    \begin{align*}
        \lim_{k\to\infty}
        \left(
        d_\gamma^*
        \int_X y^a |\nabla U_k|^2\,dv_{\bar{g}^*}
        +
        \int_M Q_\gamma^{\hat{h}} U_k^2\,d\sigma_{\hat{h}}
        \right)
        =
        \Lambda_\gamma(M,[\hat h]).
    \end{align*}
    Since $(U_k)_k$ is bounded in $W^{1,2}(X,y^a)$, after passing to a
    subsequence, there exists $U\in W^{1,2}(X,y^a)$ such that
    \begin{align*}
        U_k&\rightharpoonup U
        \quad \text{in }W^{1,2}(X,y^a) \text{ and } L^{2^*}(M),\\
        U_k&\to U
        \quad\text{in }L^2(X,y^a),\\
        U_k&\to U
        \quad\text{in }L^2(M).
    \end{align*}
    In particular,
    \begin{align}
        \int_M Q_\gamma^{\hat{h}} U_k^2\,d\sigma_{\hat{h}}
        \longrightarrow
        \int_M Q_\gamma^{\hat{h}} U^2\,d\sigma_{\hat{h}}.
        \label{eq:Qcompactness}
    \end{align}

    After passing to a further subsequence, there exist nonnegative
    bounded Radon measures $\mu$ on $X$ and $\nu$ on $M$ such
    that
    \begin{align*}
        y^a|\nabla U_k|^2\,dv_{\bar{g}^*}
        &\rightharpoonup^*\mu \, ,\\
        |U_k(-,0)|^{2^*}\,d\sigma_{\hat{h}}
        &\rightharpoonup^*\nu \, .
    \end{align*}
    Applying Lemma \ref{Lemma:CC}, there exist an at most countable set
    $J$, points $(x_j)_{j\in J}\subset M$, and positive numbers
    $(\nu_j)_{j\in J}$ and $(\mu_j)_{j\in J}$ such that
    \begin{align}
        \nu
        &=
        |U(-,0)|^{2^*}\,d\sigma_{\hat{h}}
        +
        \sum_{j\in J}\nu_j\delta_{x_j},
        \label{eq:measurelimit1}\\
        \mu
        &\geq
        y^a|\nabla U|^2\,dv_{\bar{g}^*}
        +
        \sum_{j\in J}\mu_j\delta_{(x_j,0)},
        \label{eq:measurelimit2}\\
        \mu_j
        &\geq
        \bar{S}(n,\gamma)^{-1}\nu_j^{2/2^*}
        \quad \forall j\in J.
        \label{eq:measurelimit3}
    \end{align}
    Since $d_\gamma^*\bar{S}(n,\gamma)^{-1}
        =
        \Lambda_\gamma(\mathbb{S}^n,[h_0])$ it follows that
    \begin{align}
        d_\gamma^*\mu_j
        \geq
        \Lambda_\gamma(\mathbb{S}^n,[h_0])
        \nu_j^{2/2^*} 
        \quad \forall j\in J.
        \label{eq:sphericalatom}
    \end{align}

    We claim that $\|U\|_{2^*}=1$. Let $t:= \|U\|_{2^*}^{2^*} \in [0,1]$. Testing the measure convergence defining $\nu$ with the constant
    function $1$, and using \eqref{eq:measurelimit1}, we obtain
    \begin{align}
        1
        =
        \lim_{k\to\infty}
        \int_M|U_k|^{2^*}\,d\sigma_{\hat{h}}
        =
        t+\sum_{j\in J}\nu_j.
        \label{eq:massdecomposition}
    \end{align}

    Similarly, testing the convergence of the energy measures with the
    constant function $1$ and using \eqref{eq:measurelimit2}, we get
    \begin{align*}
        \lim_{k\to\infty}
        \int_X y^a|\nabla U_k|^2\,dv_{\bar{g}^*}
        \geq
        \int_X y^a|\nabla U|^2\,dv_{\bar{g}^*}
        +
        \sum_{j\in J}\mu_j.
    \end{align*}
    Therefore, by \eqref{eq:Qcompactness} and
    \eqref{eq:sphericalatom},
    \begin{align*}
        \Lambda_\gamma(M,[\hat h])
        \geq&
        d_\gamma^*
        \int_X y^a|\nabla U|^2\,dv_{\bar{g}^*}
        +
        \int_M Q_\gamma^{\hat{h}} U^2\,d\sigma_{\hat{h}}
        +
        \sum_{j\in J}d_\gamma^*\mu_j\\
        \geq&
        d_\gamma^*
        \int_X y^a|\nabla U|^2\,dv_{\bar{g}^*}
        +
        \int_M Q_\gamma^{\hat{h}} U^2\,d\sigma_{\hat{h}}
        +
        \Lambda_\gamma(\mathbb{S}^n,[h_0])
        \sum_{j\in J}\nu_j^{2/2^*}.
    \end{align*}
    By the definition of $\Lambda_\gamma(M,[\hat h])$
    \begin{align*}
        d_\gamma^*
        \int_X y^a|\nabla U|^2\,dv_{\bar{g}^*}
        +
        \int_M Q_\gamma^{\hat{h}} U^2\,d\sigma_{\hat{h}}
        \geq
        \Lambda_\gamma(M,[\hat h])
        t^{2/2^*}.
    \end{align*}
    Hence
    \begin{align*}
        \Lambda_\gamma(M,[\hat h])
        \geq&
        \Lambda_\gamma(M,[\hat h])
        t^{2/2^*}
        +
        \Lambda_\gamma(\mathbb{S}^n,[h_0])
        \sum_{j\in J}\nu_j^{2/2^*}.
    \end{align*}
    Note that if $\Lambda_\gamma (M,[\hat{h}]) = 0$, this already implies no concentration is possible, so from now on we may assume $\Lambda_\gamma (M,[\hat{h}]) > 0$. Since $2/2^*\in(0,1)$, the map $s\longmapsto s^{2/2^*}$ is subadditive. Thus
    \begin{align*}
        \sum_{j\in J}\nu_j^{2/2^*}
        \geq
        \left(
        \sum_{j\in J}\nu_j
        \right)^{2/2^*}.
    \end{align*}
    Using \eqref{eq:massdecomposition}, we obtain
    \begin{align*}
        \Lambda_\gamma(M,[\hat h])
        \geq&
        \Lambda_\gamma(M,[\hat h])
        t^{2/2^*}
        +
        \Lambda_\gamma(\mathbb{S}^n,[h_0])
        (1-t)^{2/2^*}\\
        \geq&
        \Lambda_\gamma(M,[\hat h])
        \left(
        t^{2/2^*}
        +
        (1-t)^{2/2^*}
        \right).
    \end{align*}
    Since $\Lambda_\gamma(M,[\hat h])>0$ and $t^{2/2^*}+(1-t)^{2/2^*}>1$ for every $t\in (0,1)$, it follows that $t\in\{0,1\}$. If $t=0$, then the previous
    inequality gives
    \begin{align*}
        \Lambda_\gamma(M,[\hat h])
        \geq
        \Lambda_\gamma(\mathbb{S}^n,[h_0]),
    \end{align*}
    contradicting the hypothesis. Therefore $t=1$, and hence $\|U\|_{2^*}=1$. In particular, \eqref{eq:massdecomposition} implies that
    \begin{align*}
        \sum_{j\in J}\nu_j=0,
    \end{align*}
    so there is no concentration. Now, using this latter fact, we show strong convergence in $L^{2^*}(M)$. Indeed, $U_k\rightharpoonup U$ in $L^{2^*}(M)$ and $\|U_k\|_{2^*}=\|U\|_{2^*}=1$ the uniform convexity of $L^{2^*}(M)$ implies
    \begin{align}
        U_k\to U
        \quad\text{strongly in }L^{2^*}(M).
        \label{eq:strongLcritical}
    \end{align}

    We now show strong convergence in $W^{1,2}(X,y^a)$. Since
    $\|U\|_{2^*}=1$, the definition of the fractional Yamabe constant
    gives
    \begin{align}
        \Lambda_\gamma(M,[\hat h])
        \leq
        d_\gamma^*
        \int_X y^a|\nabla U|^2\,dv_{\bar{g}^*}
        +
        \int_M Q_\gamma^{\hat{h}} U^2\,d\sigma_{\hat{h}}.
        \label{eq:lowerboundlimit}
    \end{align}
    On the other hand, by weak lower semicontinuity of the weighted
    gradient norm and \eqref{eq:Qcompactness},
    \begin{align*}
        &d_\gamma^*
        \int_X y^a|\nabla U|^2\,dv_{\bar{g}^*}
        +
        \int_M Q_\gamma^{\hat{h}} U^2\,d\sigma_{\hat{h}}\\
        &\qquad\leq
        \liminf_{k\to\infty}
        \left(
        d_\gamma^*
        \int_X y^a|\nabla U_k|^2\,dv_{\bar{g}^*}
        +
        \int_M Q_\gamma^{\hat{h}} U_k^2\,d\sigma_{\hat{h}}
        \right)\\
        &\qquad=
        \Lambda_\gamma(M,[\hat h]).
    \end{align*}
    Together with \eqref{eq:lowerboundlimit}, this yields
    \begin{align}
        d_\gamma^*
        \int_X y^a|\nabla U|^2\,dv_{\bar{g}^*}
        +
        \int_M Q_\gamma^{\hat{h}} U^2\,d\sigma_{\hat{h}}
        =
        \Lambda_\gamma(M,[\hat h]).
        \label{eq:limitminimizer}
    \end{align}
    Thus, $U\in\mathcal{M}_1^*$. Finally, using the minimizing property, \eqref{eq:Qcompactness},
    and \eqref{eq:limitminimizer}, we obtain
    \begin{align*}
        d_\gamma^*
        \int_X y^a|\nabla U_k|^2\,dv_{\bar{g}^*}
        =
        I_\gamma^*[U_k, \bar{g}^*]
        -
        \int_M Q_\gamma^{\hat{h}} U_k^2\,d\sigma_{\hat{h}}
        \longrightarrow&
        \Lambda_\gamma(M,[\hat h])
        -
        \int_M Q_\gamma^{\hat{h}} U^2\,d\sigma_{\hat{h}}\\
        =&
        d_\gamma^*
        \int_X y^a|\nabla U|^2\,dv_{\bar{g}^*}.
    \end{align*}
    Therefore, we have convergence of the norms $\|\nabla U_k\|_{L^2(X,y^a)}
        \longrightarrow
        \|\nabla U\|_{L^2(X,y^a)}$. Since $\nabla U_k\rightharpoonup\nabla U$ in $L^2(X,y^a)$, the uniform convexity of the weighted $L^2$ space gives $\nabla U_k\to\nabla U$ strongly in $L^2(X,y^a)$.
    Together with the already known convergence $U_k \to U$ in $L^2 (X,y^a)$, we proved that $U_k\to U$ strongly in $W^{1,2}(X,y^a)$. Finally, since $U$ is a minimizer of $I_\gamma^*$, the regularity and maximum principle results of \cite{GonzQing} imply that $U$ has the required regularity and is strictly positive. Thus its trace $u=T(U)$ is a positive admissible conformal factor, and hence $u\in\mathcal{M}_1$.
\end{proof}

\begin{lemma}[Compactness of minimizing sequences for $I_\gamma$]
    Let $(M^n , [\hat{h}])$ be the conformal infinity of an asymptotically hyperbolic manifold $(X^{n+1},g^+ )$ such that $\Lambda_{\gamma} (M,[\hat{h}]) < \Lambda_{\gamma} (\mathbb{S}^n ,[h_{0}])$. Let $(w_k)_{k\in \mathbb{N}} \subset \mathcal{B}$ be a minimizing sequence, i.e. $I_{\gamma} [w_k,\hat{h}] \to \Lambda_{\gamma} (M,[\hat{h}])$. Then, up to subsequence, $(w_k)_{k\in \mathbb{N}}$ converges strongly in $H^{\gamma}(M)$ to some $w_* \in \mathcal{M}_1$.
\end{lemma}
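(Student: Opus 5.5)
The plan is to lift the sequence to the extension via weighted harmonic extension, apply Lemma \ref{lemma:compacntess_ext}, and return to $M$ by taking traces.

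\emph{Lifting.} For each $w_k\in\mathcal B$ let $U_k:=U_{w_k}$ be the weighted harmonic extension of $w_k$ to $(\overline X,\bar g^*)$. This is the solution of $-\operatorname{div}(y^a\nabla U)=0$ with trace $w_k$ given by Chang--Gonz\'alez \cite{ChangGonzalez}. By the Chang--Gonz\'alez characterization of $P_\gamma^{\hat h}$ as a Dirichlet-to-Neumann map, we have
\[
d_\gamma^*\int_X y^a|\nabla U_k|^2\,dv_{\bar g^*}+\int_M Q_\gamma^{\hat h}w_k^2\,d\sigma_{\hat h}=\int_M w_kP_\gamma^{\hat h}(w_k)\,d\sigma_{\hat h}.
\]
Since $\|w_k\|_{L^{2^*}}=1$, this gives $I_\gamma^*[U_k,\bar g^*]=I_\gamma[w_k,\hat h]\to\Lambda_\gamma(M,[\hat h])$. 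Here we use the equivalence of the two minimization problems already recorded in the introduction, so that the infimum of $I^*_\gamma$ is also $\Lambda_\gamma(M,[\hat h])$. Hence $(U_k)$ is a normalized minimizing sequence for $I_\gamma^*$.

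\emph{Passing to the limit in the extension.} Lemma \ref{lemma:compacntess_ext} then yields a subsequence with $U_k\to U$ strongly in $W^{1,2}(X,y^a)$ for some $U\in\mathcal M_1^*$. Its proof also shows that the trace $w_*:=TU$ is positive and lies in $\mathcal M_1$.

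\emph{Descending to $M$.} The trace operator $T:W^{1,2}(X,y^a)\to H^\gamma(M)$ is continuous. This is the standard weighted trace theorem for $a\in(-1,1)$, the same fact used in the proof of the extension corollary. Therefore
\[
\|w_k-w_*\|_{H^\gamma(M)}=\|T(U_k-U)\|_{H^\gamma(M)}\le C\|U_k-U\|_{W^{1,2}(X,y^a)}\to0.
\]
As a consistency check, $w_*$ satisfies $\|w_*\|_{L^{2^*}}=1$ and $I_\gamma[w_*]\le I_\gamma^*[U]=\Lambda_\gamma$, so $w_*$ is indeed a minimizer; equality here also forces $U=U_{w_*}$. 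Nonnegativity of $w_*$ is preserved under $L^{2^*}$ limits, and positivity follows from \cite{GonzQing} as noted.

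\emph{Main obstacle.} The delicate point is the normalization used in Lemma \ref{lemma:compacntess_ext}. Its proof relies on the bound of $U_k$ in $W^{1,2}(X,y^a)$, including the $L^2(X,y^a)$ part, and not only on boundedness of the gradient energy. For harmonic extensions this bound holds because $\|U_{w}\|_{W^{1,2}(X,y^a)}\le C\|w\|_{H^\gamma(M)}$. In turn, $\|w_k\|_{H^\gamma}$ is bounded because the energies $\int w_kP_\gamma^{\hat h}w_k$ are bounded and $\|w_k\|_{L^2}\le C\|w_k\|_{L^{2^*}}=C$, together with Gårding's inequality $\|w\|_{H^\gamma}^2\le C\big(\int wP_\gamma^{\hat h}w+\|w\|_{L^2}^2\big)$ for the elliptic operator $P_\gamma^{\hat h}$. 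I would state this Gårding step explicitly before invoking the lemma.
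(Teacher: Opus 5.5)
Your proposal is correct and follows the paper's proof essentially step for step. You lift $w_k$ to its weighted harmonic extension $U_k$, use the Chang--Gonz\'alez energy identity to see that $(U_k)$ is a normalized minimizing sequence for $I_\gamma^*$, apply Lemma \ref{lemma:compacntess_ext}, and return to $M$ via continuity of the trace. Your extra G{\aa}rding and extension-bound remark supplies the $W^{1,2}(X,y^a)$-boundedness that the paper asserts without justification in the proof of Lemma \ref{lemma:compacntess_ext}, which is a welcome detail.
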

\begin{proof}
    Let $(w_k)_{k\in \mathbb{N}} \subset \mathcal{B}$ be a minimizing sequence for $I_\gamma[-,\hat{h}]$. Let $U_k:=U_{w_k}$ denote the weighted harmonic extension of $w_k$. By the equivalence between the fractional and extension formulations, and since $\|w_k\|_{L^{2^*}(M)}=1$, we have
    \begin{align*}
        I_\gamma[w_k,\hat{h}]
        &= \int_M w_k P_\gamma^{\hat{h}}(w_k)\,d\sigma_{\hat{h}}\\
        &= d_\gamma^*\int_X y^a |\nabla U_k|^2\,dv_{\bar{g}^*}
        + \int_M Q_\gamma^{\hat{h}}w_k^2\,d\sigma_{\hat{h}} = I_\gamma^*[U_k,\bar{g}^*]
        \longrightarrow \Lambda_\gamma(M,[\hat{h}]).
    \end{align*}
    Thus $(U_k)_{k\in\mathbb N}$ is a normalized minimizing sequence for the extension problem. By Lemma \ref{lemma:compacntess_ext}, after passing to a subsequence, there exists $U\in\mathcal{M}_1^*$ such that $U_k\to U $ in $W^{1,2}(X,y^a)$. The continuity of the trace operator then yields
    \begin{align*}
        w_k=T(U_k)\to T(U)=:w
        \qquad\text{in }H^\gamma(M).
    \end{align*}
    Since $U\in\mathcal{M}_1^*$, the equivalence of the two minimization problems implies that $w\in\mathcal{M}_1$. Hence every minimizing sequence admits a subsequence converging strongly in $H^\gamma(M)$ to a normalized minimizer.
\end{proof}

\end{document}